\newtheorem{thm}{Theorem}
\newtheorem{cor}{Corollary}
\newtheorem{lem}{Lemma}[section] 
\newtheorem{prop}[lem]{Proposition}
\theoremstyle{definition} 
\newtheorem{defn}{Definition}[section]
\newtheorem{rem}{Remark}[section] 
\newtheorem*{ack}{Acknowledgement} 
\numberwithin{equation}{section}
 \newcommand{\R}{\mathbb R} 
 \newcommand{\C}{\mathbb C} 
 \newcommand{\T}{\mathbb T} 
 \newcommand{\N}{\mathbb N}
 \newcommand{\la}{\lambda} 
 \newcommand{\D}{\displaystyle} 
 \newcommand{\A}{\mathcal{A}}
 \newcommand{\q}{\quad} 
 \newcommand{\qq}{\qquad} 
 \newcommand{\ph}[1]{\phantom{#1}} 
 \newcommand{\Ch}[1]{\operatorname{Ch}(#1)}
 \newcommand{\ov}{\overline}
 \newcommand{\fr}{\frac} 
 \newcommand{\cl}[1]{\operatorname{cl}(#1)}
 \newcommand{\clk}[2]{\operatorname{cl}_{#1}(#2)}
 \newcommand{\set}[1]{\{ #1 \}}
 \newcommand{\cha}{\Ch{A}} 
 \newcommand{\chb}{\Ch{\BT}}
 \newcommand{\qbox}[1]{\q \mbox{#1}}
 \newcommand{\al}{\alpha} 
 \newcommand{\gam}{\gamma}
 \renewcommand{\Re}{{\rm Re}\,} 
 \renewcommand{\Im}{{\rm Im}\,} 
 \renewcommand{\d}{\delta} 
 \renewcommand{\set}[1]{\{ #1 \}} 
 \renewcommand{\ph}[1]{\phantom{#1}}
 \newcommand{\V}[1]{\Vert #1 \Vert} 
 \newcommand{\DV}[1]{\left\Vert #1 \right\Vert} 
 \newcommand{\VC}[1]{\left\Vert #1 \right\Vert_{(C)}} 
 \newcommand{\VS}[1]{\left\Vert #1 \right\Vert_{(\Sigma)}}
 \newcommand{\Vs}[1]{\left\Vert #1 \right\Vert_{(\sigma)}}
 \newcommand{\VSS}[1]{\left\Vert #1 \right\Vert_{(e)}}
 \newcommand{\Vinf}[1]{\Vert #1 \Vert_\infty}
 \newcommand{\tf}{\widetilde{f}}
 \newcommand{\dphi}[1]{\d_{\Phi(#1)}}
 \newcommand{\e}{\varepsilon} 
 \newcommand{\ez}{s_0} 
 \newcommand{\eo}{s_1}
 \newcommand{\et}{s_2}
 \newcommand{\id}{{\rm id}}
 \newcommand{\alo}{\al_1}
 \newcommand{\phio}{\phi_1}
 \newcommand{\phii}{\phi_i}
 \newcommand{\phila}{\phi_\la}
 \newcommand{\varphio}{\varphi_1}
 \newcommand{\varphii}{\varphi_i}
 \newcommand{\varphila}{\varphi_\la}
 \newcommand{\psio}{\psi_1}
 \newcommand{\psii}{\psi_i}
 \newcommand{\psila}{\psi_\la}
 \newcommand{\unit}{\mathbf{1}}
 \newcommand{\tio}{\widetilde{\mathbf{1}}}
 \newcommand{\ext}[1]{\operatorname{ext}(#1)}
 \renewcommand{\t}{\bm{t}}
 \DeclareSymbolFont{largesymbol}{OMX}{yhex}{m}{n}
 \DeclareMathAccent{\widehat}{\mathord}{largesymbol}{"62}
 \newcommand{\sa}{\partial \A}
 \newcommand{\Di}{\mathbb D}
 \newcommand{\Db}{\overline{\Di}}
 \renewcommand{\H}{H^\infty(\Di)}
 \newcommand{\hf}{\widehat{f'\,}}
 \newcommand{\chh}{\Ch{\A}}
 \newcommand{\M}{\mathcal M}
 \newcommand{\Ao}{\mathcal{S}^\infty}
 \newcommand{\wh}[1]{\widehat{#1}}
 \renewcommand{\t}[1]{\widetilde{#1}}
 \newcommand{\BT}{B_{e}}
 \newcommand{\W}{\mathcal{W}}
 \newcommand{\Kvs}{\mathcal{K}_{e}}
 \newcommand{\KS}{\mathcal{K}_\Sigma}
 \newcommand{\KC}{\mathcal{K}_C}
 \newcommand{\Ks}{\mathcal{K}_\sigma}
 \newcommand{\DD}{\mathfrak{D}}
 \newcommand{\DS}{\DD_\Sigma}
 \newcommand{\DC}{\DD_C}
 \newcommand{\Ds}{\DD_\sigma}
 \newcommand{\Dvs}{\DD_{e}}
 \newcommand{\XS}{\mathcal{K}_\Sigma^\circ}
 \newcommand{\h}{\mathbf{h}}
 \newcommand{\BS}{B_\Sigma}
 \newcommand{\BTS}{\mathcal{E}_\Sigma}
 \newcommand{\B}{\mathcal{E}_e}
 \newcommand{\K}{\mathcal{K}}
\begin{document}

 \def\corauthor{{Corresponding author}}


\title
[Surjective isometries on a Banach space]
{Surjective isometries on a Banach space of analytic functions with bounded derivatives}


\author[T.~Miura]{Takeshi Miura$^*$}
\address{Department of Mathematics, 
Faculty of Science, Niigata University, Niigata 950-2181  Japan}
\email{miura@math.sc.niigata-u.ac.jp}
\thanks{$^*$Corresponding author: miura@math.sc.niigata-u.ac.jp}
\thanks{The first author was supported by KAKENHI Grant Number 20K03650.}

\author[N.~Niwa]{Norio Niwa} 
\address{Research Office of Mathematics, 
School of Pharmacy, Nihon University, Chiba 274-8555  Japan} 
\email{niwa.norio@nihon-u.ac.jp}

\subjclass[2020]{46B04, 46J10} 
\keywords{bounded analytic function, disc algebra, extreme point, isometry, uniform algebra}


\begin{abstract}
Let $H(\Di)$ be the linear space of all analytic functions on the open unit disc $\Di$
and $H^p(\Di)$ the Hardy space on $\Di$.
The characterization of complex linear isometries on
$\mathcal{S}^p=\set{f\in H(\Di):f'\in H^p(\Di)}$ was given
for $1 \leq p < \infty$ by Novinger and Oberlin in 1985.
Here, we characterize surjective, not necessarily linear, isometries on $\Ao$.
\end{abstract}

\maketitle


\section{Introduction and Main results}

Let $(N, \DV{\cdot}_N)$ be a real or complex normed linear space.
A mapping $T \colon N \to N$ is said to be an {\it isometry} if
$$
\V{T(f) - T(g)}_N = \V{f-g}_N
\qq (f, g \in N).
$$
Here we notice that isometries need not be linear. A linear map $S \colon N \to N$ is
an isometry if it preserves the norm in the following sense:
$\V{S(f)}_N = \V{f}_N$ for $f \in N$.
Banach \cite{ban} gave the characterization of surjective, not necessarily linear,
isometries on the Banach space
of all continuous real valued functions $C_\R(K)$, $K$ a compact metric space,
with the supremum norm. Many mathematicians have been investigating linear isometries
on various subspaces of the linear space $H(\Di)$ of all analytic functions on the open unit disc $\Di$.
Nagasawa \cite{nag} characterized surjective complex linear isometries on uniform algebras.
Let $H^p(\Di)$ be the Hardy spaces for $1\leq p\leq\infty$.
By the characterization of Nagasawa, we see that every surjective complex linear isometry
on $H^\infty(\Di)$ is a weighted composition operator.
deLeeuw, Rudin and Wermer \cite{del} obtained the form of surjective complex linear isometries
on $H^\infty(\Di)$ and $H^1(\Di)$.
Forelli \cite{for} extended the result by deLeeuw, Rudin and Wermer to the characterization of
complex linear isometries, which need not be surjective, on $H^p(\Di)$
for $1 \leq p < \infty$ with $p \neq 2$.
Results for isometries on other spaces of analytic functions were obtained
by Botelho \cite{bot}, Cima and Wogen \cite{cim}, Hornor and Jamison \cite{hor}
and Kolaski \cite{kol}.

Let $f'$ be the derivative of $f \in H(\Di)$.
Novinger and Oberlin \cite{nov} gave the characterization of complex linear isometries
on the space $\mathcal{S}^p = \set{f \in H(\Di) : f' \in H^p(\Di)}$ for
$1 \leq p < \infty$, $p \neq 2$ with respect to the following norms: $|f(0)| + \DV{f'}_{H^p}$ and
$\Vinf{f} + \DV{f'}_{H^p}$, where $\DV{\cdot}_{H^p}$ and $\DV{\cdot}_\infty$ are the usual norm
of $H^p(\Di)$ and the supremum norm on $\Di$, respectively.
In \cite{nov}, the authors assume no surjectivity of isometries on $\mathcal{S}^p$,
while they do assume that $p$ is finite.
The purpose of this paper is to characterize all surjective, not necessarily linear, isometries
on $\Ao$ with the norms $\Vinf{f} + \Vinf{f'}$, $\sup_{z\in\Di}(|f(z)|+|f'(z)|)$
and $|f(a)| + \Vinf{f'}$ for $a \in \Di$.

The following are the main results of this paper.


\begin{thm}\label{thm1}
A surjective map $T \colon \Ao \to \Ao$ is an isometry with the norms
$\VS{f} = \sup_{z \in \Di} |f(z)| + \sup_{w \in \Di} |f'(w)|$
or $\VC{f}=\sup_{z\in\Di}(|f(z)|+|f'(z)|)$
for $f \in \Ao$,
if and only if there exist constants $c, \la \in \C$ with $|c| = |\la| = 1$ such that
\begin{align*}
T(f)(z) 
&= 
T(0)(z) + cf(\la z),
&
(f \in \Ao, \ z \in \Di), & \q\mbox{or} \\
T(f)(z) 
&= 
T(0)(z) + \ov{cf(\ov{\la z})},
&
(f \in \Ao, \ z \in \Di). &
\end{align*}
\end{thm}


\begin{thm}\label{thm2}
Let $a \in \Di$.
A surjective map $T \colon \Ao \to \Ao$ is an isometry
with the norm $\Vs{f} = |f(a)| + \sup_{w \in \Di} |f'(w)|$ for $f \in \Ao$,
if and only if there exist constants $c_0, c_1, \la \in \C$ with $|c_0| = |c_1| = |\la| = 1$
and $b \in \Di$ such that,
for a function $\D \rho(z) = \la \, \fr{z-b}{1-\ov{b}z}$ defined on $\Di$,
\begin{align*}
T(f)(z) 
&= 
T(0)(z) + c_0f(a) + \int_{[a,z]} c_1 f'(\rho(\zeta))\,d\zeta,
&(f \in \Ao, \, z \in \Di), \qq\mbox{or} \\
T(f)(z) 
&= 
T(0)(z) + c_0 \ov{f(a)} + \int_{[a,z]} c_1 f'(\rho(\zeta))\,d\zeta,
&(f \in \Ao, \, z \in \Di), \qq\mbox{or}\\
T(f)(z)
&= 
T(0)(z) + c_0f(a) + \int_{[a,z]} c_1 \ov{f'(\ov{\rho(\zeta)})}\,d\zeta,
&(f \in \Ao, \, z \in \Di), \qq\mbox{or} \\
T(f)(z) 
&= 
T(0)(z) + c_0 \ov{f(a)} + \int_{[a,z]} c_1 \ov{f'(\ov{\rho(\zeta)})}\,d\zeta
&(f \in \Ao, \, z \in \Di).\qq\mbox{\phantom{or}}
\end{align*}
\end{thm}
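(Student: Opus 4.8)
The plan is to reduce the problem to the known descriptions of isometries on $\C$ and on $\H$, exploiting the fact that the norm $\Vs{\cdot}$ presents $\Ao$ as an $\ell^{1}$-type direct sum. First I would apply the Mazur--Ulam theorem: since $T$ is a surjective isometry of the real normed space $(\Ao,\Vs{\cdot})$, the map $U:=T-T(0)$ is a surjective real-linear isometry with $U(0)=0$, so it suffices to determine $U$. The key structural remark is that $\Phi(f)=(f(a),f')$ is an isometric isomorphism of $(\Ao,\Vs{\cdot})$ onto the $\ell^{1}$-sum $\C\oplus_{1}\H$: indeed $\Vs{f}=|f(a)|+\Vinf{f'}$ is precisely the $\ell^{1}$-norm of $(f(a),f')$, and $\Phi$ is onto because $f(z)=c+\int_{[a,z]}g(\zeta)\,d\zeta$ recovers a preimage of any pair $(c,g)$. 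Hence $U$ transports to a surjective real-linear isometry $\t{U}$ of $\C\oplus_{1}\H$.

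The heart of the argument is to show that $\t{U}$ respects this decomposition, that is, $\t{U}=U_{1}\oplus U_{2}$ with $U_{1}$ a surjective real-linear isometry of $\C$ and $U_{2}$ one of $\H$. Here I would use that surjective real-linear isometries preserve the family of $L$-summands: the only nontrivial $L$-summands of $\C\oplus_{1}\H$ are the two summands $\C\times\{0\}$ and $\{0\}\times\H$ (one uses the standard fact that a uniform algebra such as $\H$ admits no nontrivial $L$-summand), and since these two atoms are non-isomorphic, $\t{U}$ must fix each of them. Real-linearity then forces $\t{U}(c,g)=(U_{1}c,U_{2}g)$. (In particular $U$ maps constants to constants.)

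With the decomposition in hand I would classify the two factors. A surjective real-linear isometry of $\C\cong\R^{2}$ is a rotation or a reflection, so $U_{1}c=c_{0}c$ or $U_{1}c=c_{0}\ov c$ with $|c_{0}|=1$. For $U_{2}$ I would first show that a surjective real-linear isometry of $\H$ is either complex-linear or conjugate-linear --- the rigidity of analyticity (that $\ov g$ is analytic only for constant $g$) prevents any mixing --- and then, composing with the conjugation $J(g)(w)=\ov{g(\ov w)}$ when necessary, reduce to the complex-linear case and invoke the deLeeuw--Rudin--Wermer theorem \cite{del}. This yields $U_{2}g=c_{1}(g\circ\rho)$ with $|c_{1}|=1$ and $\rho$ a disc automorphism, or the conjugate form $U_{2}g=c_{1}\ov{g(\ov{\rho(\cdot)})}$; writing $\rho(z)=\la\tfrac{z-b}{\ov b\,z-1}$ fixes the parameters $b\in\Di$, $|\la|=1$.

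Finally I would reassemble. Since $U(f)(a)=U_{1}(f(a))$ and $\big(U(f)\big)'=U_{2}(f')$, integration from $a$ gives $U(f)(z)=U_{1}(f(a))+\int_{[a,z]}U_{2}(f')(\zeta)\,d\zeta$, and the four combinations of the two possibilities for $U_{1}$ with the two for $U_{2}$ produce exactly the four displayed formulas once $T(0)$ is added back. The converse is routine: for each listed form one checks directly that $(T(f)-T(g))(a)$ has modulus $|f(a)-g(a)|$ and that $(T(f)-T(g))'$ has supremum norm $\Vinf{f'-g'}$ (using that $\rho$ maps $\Di$ onto $\Di$), so $T$ is an isometry, while surjectivity follows because each form has an inverse of the same type and each $T(f)$ lies in $\Ao$. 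I expect the main obstacle to be the middle step: proving that $\t{U}$ cannot mix the $\C$- and $\H$-summands, together with the lemma that a real-linear isometry of $\H$ is complex-linear or conjugate-linear. Everything else is either standard (Mazur--Ulam, the isometries of $\C$, deLeeuw--Rudin--Wermer) or a direct computation.
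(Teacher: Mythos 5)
Your architecture (Mazur--Ulam, then the identification $f \mapsto (f(a), f')$ of $(\Ao, \Vs{\cdot})$ with the $\ell^1$-sum $\C \oplus_1 \H$, then factorwise classification and reassembly) is genuinely different from the paper's, and the outer steps are fine: the identification is indeed a surjective isometry, the reassembly by integration from $a$ gives exactly the four displayed forms, and the converse check is routine. But the two steps you yourself flag as the ``main obstacle'' are genuine gaps, not citations. First, the claim that $\H$ admits no nontrivial L-summand is not a standard quotable fact. The classical triviality results cover $C(K)$- and $L^1$-type spaces (via the duality between L-projections and weak*-continuous M-projections), and they do not pass to a uniform algebra $A \subset C(K)$, whose dual is a quotient of $M(K)$. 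Worse, your $\t{U}$ is only \emph{real}-linear, so you need triviality of \emph{real} L-summands of $\H$ viewed as a real Banach space; real L-projections on a complex space need not be complex-linear (one must first play $P$ against $Q = -iPi$ and the Boolean algebra of L-projections), and even after that reduction, ruling out a two-real-dimensional Euclidean L-summand of $\H$ requires a concrete argument with the sup norm and analyticity. Without this lemma your assertion that $\t{U}$ cannot mix the two summands --- the exact analogue of what the paper labors to prove --- is unsupported. The paper does this separation by embedding $\Ao$ into $C(X_{\set{a}})$ with $X_{\set{a}} = \set{a} \times \sa \times \T$, computing representing measures and extreme points of the dual ball (Lemmas~\ref{lem2.3}--\ref{lem2.8}), and transporting them under the dual isometry $S_*$; that machinery is precisely the price of the separation you are asserting for free.

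Second, the dichotomy ``a surjective real-linear isometry of $\H$ is complex-linear or conjugate-linear'' does not follow from the remark that $\ov{g}$ is analytic only for constant $g$. The structure theorem for surjective real-linear isometries of uniform algebras (Miura \cite{miu1}, Hatori--Miura \cite{hat}, which the paper invokes as \eqref{thm2.8}) allows a weight $u_1$ and a \emph{clopen partition} of the boundary on which the map is holomorphic on one piece and conjugating on the other --- on $C(K)$ with $K$ disconnected such genuinely mixed isometries exist, so something specific to $\H$ must be used. The paper excludes mixing by showing $\W(\wh{\unit})$ is an analytic function of constant modulus one on $\M$ (hence constant, via density of $\Di$ in $\M$ by the Corona theorem), upgrading $\ov{c_1}\W$ to a real-algebra automorphism, and then using connectedness of $\M$; a further open-mapping and invertibility argument is needed to see that the induced self-map of $\M$ restricts to a conformal automorphism of $\Di$, yielding the M\"obius form of $\rho$. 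Your plan of ``compose with conjugation and invoke deLeeuw--Rudin--Wermer'' works only \emph{after} this dichotomy and the reduction to $\Di$ are established, so these ingredients must be supplied rather than presumed. If you fill both gaps --- the real-L-summand lemma for $\H$ and the no-mixing argument --- your route closes and is arguably more transparent than the paper's; as written, it is incomplete at exactly its two load-bearing points.
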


As direct consequences of Theorems~\ref{thm1} and \ref{thm2},
we have the following results: They give the characterizations of
surjective complex linear isometries on $\Ao$ with respect to
the norms $\VS{\cdot}$ and $\Vs{\cdot}$, which had been open
since 1985 (see \cite{nov}).


\begin{cor}\label{cor1}
If $T \colon \Ao \to \Ao$ is a surjective complex linear isometry with the norms
$\VS{f}$ or $\VC{f}$,
then there exist constants $c, \la \in \C$ with $|c| = |\la| = 1$ such that
\[
T(f)(z)=cf(\la z),
\qq
(f \in \Ao, \ z \in \Di).
\]
\end{cor}


\begin{cor}\label{cor2}
Let $a \in \Di$ and $T \colon \Ao \to \Ao$ a surjective complex linear isometry
with the norm $\Vs{f}$.
There exist constants $c_0, c_1, \la \in \C$ with $|c_0| = |c_1| = |\la| = 1$
and $b \in \Di$ such that
\[
T(f)(z)=c_0f(a) + \int_{[a,z]} c_1 f'(\rho(\zeta))\,d\zeta,
\qq(f \in \Ao, \, z \in \Di),
\]
where $\D \rho(z) = \la \, \fr{z-b}{1-\ov{b}z}$ for $z\in\Di$.
\end{cor}

Theorem~\ref{thm1} states that every surjective isometry $T$ on $(\Ao,\VS{\cdot})$
or $(\Ao,\VC{\cdot})$ is a weighted composition operator, or a combination of
such operator and the complex conjugate, provided that $T(0)=0$.
The weighted composition operator is induced by constant functions of modulus $1$.
If $T$ is a surjective isometry on $(\Ao,\Vs{\cdot})$ with $T(0)=0$, then it is represented
by a combination of weighted composition operator, the complex conjugate and
the integral operator. Here, the weighted composition operator is induced by
a constant function of modulus $1$ and a M\"{o}bius transform on $\Di$.
These maps are trivial examples of surjective isometries on $\Ao$ with the norms
$\VS{\cdot}$, $\VC{\cdot}$ and $\Vs{\cdot}$.
We see that there are no other surjective isometries on the space $\Ao$
by Theorems~\ref{thm1} and \ref{thm2}.

Though our proofs of Theorems~\ref{thm1} and \ref{thm2} seem to be complicated,
the main idea of them is quite simple.
It is based on, what is so called, {\it extreme point argument}:
More explicitly, let $C(X)$ be a Banach space of all continuous complex
valued functions defined on a compact Hausdorff space $X$.
Let $S$ be an surjective, {\it complex linear} isometry from
a subspace $A$ of $C(X)$ onto itself.
Then the adjoint operator $T^*$ defined on the dual space $A^*$ of $A$
preserves extreme points of the closed unit ball $A_1^*$ of $A^*$.
It is well-known that each extreme point of $A_1^*$ is of the form
$\la\d_x$ for some $\la\in\C$ with $|\la|=1$ and $x\in\cha$,
Choquet boundary for $A$.
Here, $\d_x$ denotes the linear functional on $A$, defined by
$\d_x(f)=f(x)$ for $f\in A$.
Since $S^*(\d_x)$ is an extreme point of $A_1^*$ for $x\in\cha$,
there exist $\mu\in\C$ with $|\mu|=1$ and $y\in\cha$ such that
$S^*(\d_x)=\mu\d_y$.
We can define two mappings $\al$ and $\phi$ satisfying
$S^*(\d_x)=\al(x)\d_{\phi(x)}$,
which yields $S(f)(x)=S^*(\d_x)(f)=\al(x)\d_{\phi(x)}(f)=\al(x)f(\phi(x))$
for all $f\in A$ and $x\in\cha$.
That is, $S$ is a weighted composition operator.

To apply the above arguments to a surjective isometry $T$ on $\Ao$ in
Theorems~\ref{thm1} and \ref{thm2},
we first embed $\Ao$ into $C(K)$ for some compact Hausdorff space $K$,
which depends on the norms of $\Ao$.
In section~\ref{sect2}, we define a compact Hausdorff space $K$ and
a complex linear isometry $U\colon\Ao\to C(K)$.
Suppose that $T$ is a surjective isometry on $\Ao$.
By the Mazur-Ulam theorem, we may assume that $T$ is {\it real linear}.
Setting $A=U(\Ao)$, we define $S=U|_A\circ T\circ(U|_A)^{-1}$,
and thus $S$ is a surjective {\it real linear} isometry from $A$ onto itself.
We introduce a kind of adjoint operator, $S_*$, between $A_1^*$;
since $S$ is real linear, the adjoint operator $S^*$ is not a well defined
map on $A_1^*$.
Roughly speaking, we prove that $S_*$ preserves extreme points,
and thus it is a weighted composition operator.
In section~\ref{sect3}, we investigate the map $S=U|_A\circ T\circ(U|_A)^{-1}$.
Then we have the form of $T=(U|_A)^{-1}\circ S\circ U|_A$, which is
unsatisfactory for our purpose.
We give complete description of surjective isometry $T$ on
$\Ao$ in section~\ref{sect4}


\section{Preliminaries and an embedding of $\Ao$ into $C(K)$}
\label{sect2}

In this section, we embed the space $\Ao$ into $C(\K)$ for some
compact Hausdorff space $\K$.
Let $B$ be the isometric image of $\Ao$ in $C(\K)$.
Then we investigate extreme points of the closed unit ball of the dual space of $B$.
Such a space $\K$ depends on norm on $\Ao$, which is closely related to
Shilov boundary for a uniform algebra.
We prove, in particular, that Choquet boundary coincides with the Shilov boundary.

Let $H(\Di)$ be the algebra of all analytic functions on the open unit disc 
$\Di = \set{z \in \C : |z| < 1}$ in the complex number field $\C$. 
The constant function in $H(\Di)$, which takes the value only $1$, is denoted by $\unit$.
We denote by $\H$ the commutative Banach algebra of all bounded analytic functions
on $\Di$ with the supremum norm $\V{v}_{\Di} = \sup_{z \in \Di} |v(z)|$ for $v \in \H$.
We define a Banach space $\Ao$ by
$$
\Ao = \set{f \in H(\Di) : f' \in \H}
$$
with norms $\VS{f}$, $\VC{f}$ 
and $\Vs{f}$, given by
\[
\VS{f} = \V{f}_{\Di} + \V{f'}_{\Di},\q
\VC{f}=\sup_{z\in\Di}(|f(z)|+|f'(z)|)
\q\mbox{and}\q
\Vs{f} = |f(a)| + \V{f'}_{\Di}
\]
for $f \in \Ao$, where $a \in \Di$.
Let $A(\Db)$ be the disc algebra, that is, the commutative
Banach algebra of all analytic functions
on $\Di$ which can be extended to continuous functions on the closed unit disc $\Db$.
We see that $\Ao$ is a subset of $A(\Db)$ by \cite[Theorem~3.11]{dur}.

We need some properties of $\H$.
The maximal ideal space $\M$ of $\H$ is a compact Hausdorff space with
the relative weak*-topology of the dual space of $\H$.
The Gelfand transform $\widehat{v}$ of $v \in \H$ is a continuous function on $\M$,
defined by $\wh{v}(\eta) = \eta(v)$ for $\eta \in \M$. Then
$\V{\widehat{v}}_{\M} = \sup_{\eta \in \M} |\widehat{v}(\eta)| = \V{v}_{\Di}$
for every $v \in \H$.
Let $C(\M)$ be the commutative Banach algebra of all continuous complex valued
functions on $\M$ with the supremum norm $\V{\!\cdot\!}_\M$.
In the rest of this paper, we denote by $\A$ the uniform algebra 
$\set{\widehat{v} \in C(\M) : v \in \H}$ on $\M$, i.e., $\A$ is a uniformly closed
subalgebra of $C(\M)$
that contains the constant functions and separates the points of $\M$.
Shilov boundary $\sa$ for $\A$ is the smallest closed subset of $\M$ with
the property that $\V{\wh{v}}_{\sa} = \sup_{\eta \in \sa} |\wh{v}(\eta)| = \V{\wh{v}}_\M$
for all $\wh{v} \in \A$. 
Let $\T$ be the unit circle in $\C$. Then $\T$ is the Shilov boundary for $A(\Db)$.
Hence, $\V{\wh{f}}_\T = \sup_{z \in \T} |\wh{f}(z)| = \V{f}_{\Di}$ for all $f \in A(\Db)$.
The identity function on $\Di$ is denoted by $\id$.

We shall introduce the Choquet boundary $\Ch{\A}$ for $\A$.
Let $\A^*_1$ be the closed unit ball of the dual space of $\A$.
The Choquet boundary $\Ch{\A}$ for $\A$ is the set of all points
$\eta\in\M$ such that the point evaluation functional
$\d_\eta\colon\A\to\C$ at $\eta$ is an extreme point of $\A_1^*$.
It is well-known that
Choquet boundaries are dense in Shilov boundaries for uniform algebras.
In fact, $\Ch{\A}=\sa$; the authors of this paper believe that it is a well-known fact.
Here, we give its proof for completeness.

\begin{prop}\label{prop2.0}
The Choquet boundary for $\A$ is $\sa$.
\end{prop}

\begin{proof}
We need to show that $\sa \subset \Ch{\A}$, since $\Ch{\A}$ is dense in $\sa$.
Choose $\eta_0 \in \sa$ arbitrarily. 
Notice that the inner functions of $\H$ separate the points of $\M$
(cf. \cite[Corollary~5.2 in Chapter X]{gar}).
Let $\eta_1\in \M$ with $\eta_1\neq\eta_0$. There exists an inner function 
$f_{\eta_1}\in\H$ such that 
$\wh{f_{\eta_1}}(\eta_1)\neq\wh{f_{\eta_1}}(\eta_0)$. 
It follows from \cite[Theorem 2.2 in Chapter V]{gar} that 
$|\wh{f_{\eta_1}}|= 1$ on $\sa$. 
Then $|\wh{f_{\eta_1}}(\eta_0)|=1=\V{\wh{f_{\eta_1}}}_\M$,
since $\sa$ is a boundary for $\A$.
We set 
$\lambda_0=\wh{f_{\eta_1}}(\eta_0)\in \T$, and thus 
$\wh{f_{\eta_1}}(\eta_1)\neq\la_0$.
Define $u_{\eta_1}(z)=(\ov{\la_0}f_{\eta_1}(z)+1)/2$ for $z \in \Di$.
Then $u_{\eta_1}\in\H$ with
$\wh{u_{\eta_1}}(\eta_0)=1=\V{\wh{u_{\eta_1}}}_\M$ and
\[
|\wh{u_{\eta_1}}(\eta)|=1\quad\mbox{if and only if}\quad
\wh{u_{\eta_1}}(\eta)=1
\quad\mbox{for $\eta\in\M$}.
\]
Therefore, $\wh{u_{\eta_1}}$ is a peaking function for $\A$.
The set $P_{\wh{u_{\eta_1}}}=\set{\eta\in\M:\wh{u_{\eta_1}}(\eta)=1}$
satisfies
$\eta_0\in P_{\wh{u_{\eta_1}}}$ but $\eta_1\not\in P_{\wh{u_{\eta_1}}}$.
Hence, $\cap_{\eta_1\in\M\setminus\set{\eta_0}} P_{\wh{u_{\eta_1}}}=\set{\eta_0}$.
We observe that $\eta_0 \in \Ch{\A}$
(see, for example, \cite[Propositions~3.18 and 3.19]{hat2}), and thus
$\sa \subset\Ch{\A}$, as is claimed.
\end{proof}

The Choquet boundary $\Ch{\A}$ for $\A$ has the following property, which we shall
use later:
\begin{quote}
For each $\e>0$, $\eta_0\in\Ch{\A}$ and open neighborhood $V$ of $\eta_0$ in $\M$,
there exists $\wh{v}\in\A$ such that $\wh{v}(\eta_0)=1=\V{\wh{v}}_\M$ and
$|\wh{v}|<\e$ on $\M\setminus V$.
\end{quote}
We refer the reader to \cite{bro} for information about uniform algebras.

We may and do regard $\Di$ as a subset of $\M$. Then $\Di$ is dense in $\M$
by the Corona theorem \cite[Theorem~1.8 in Chapter V, and Chapter VIII]{gar}.
We define the fiber $\M_\xi=\set{\eta\in\M:\wh{\id}(\eta)=\xi}$ for each
$\xi\in\T$. Then we have $\M=\Di\cup\bigcup_{\xi\in\T}\M_\xi$
(see \cite[Section~1 of Chapter V]{gar}). If $f\in A(\Db)$, then $\wh{f}(\xi)=\wh{f}(\eta)$
for all $\xi\in\T$ and $\eta\in\M_\xi$ by \cite[Theorem, p.161]{hof}
(see, also \cite[Exercises and further results in Chapter V]{gar}).
Recall that the Shilov boundary $\sa$ for $\A$ is the smallest closed subset
of $\M$ so that $\V{\wh{v}}_{\sa}=\V{\wh{v}}_\M$ for all $\wh{v}\in\A$.
By the maximum modulus principle, we obtain
$\V{\wh{v}}_\M=\V{\wh{v}}_{\M\setminus\Di}$ for $v\in\H$.
Hence, $\sa\subset\M\setminus\Di$.
If, in addition, $v\in A(\Db)$, then
$\V{v}_{\Di}=\V{\wh{v}}_\M=\V{\wh{v}}_{\M\setminus\Di}=\V{\wh{v}}_\T$,
since $\wh{v}(\xi)=\wh{v}(\eta)$ for $\xi\in\T$ and $\eta\in\M_\xi$.
We refer the reader to the following books
for information about the maximal ideal space $\M$ of $\H$, \cite{gar} and \cite{hof}.

To embed $\Ao$ into $C(\Kvs)$ for some compact Hausdorff space $\Kvs$,
we introduce a compact Hausdorff space $\Dvs$, which depends on
the norms $\VS{\cdot}$, $\VC{\cdot}$ and $\Vs{\cdot}$.
Let $a \in \Di$, and we define $\DS$, $\DC$ 
and $\Ds$ by
\begin{equation}\label{D}
\DS=\T\times\sa, \q\DC=\set{(\eta,\eta):\eta\in\sa}
\qbox{and}\q
\Ds=\set{a}\times\sa.
\end{equation}
For $\Dvs\in\set{\DS,\DC,\Ds}$, we set
$$
\Kvs=\Dvs\times\T.
$$
Then $\Dvs$ and $\Kvs$ are compact Hausdorff spaces with the 
product topology. For each $f \in \Ao$, we define $\tf$ by
\begin{equation}\label{tf}
\tf(z,\eta,w) = \wh{f}(z) + \hf(\eta)w
\qq (z,\eta,w) \in \Kvs:
\end{equation}
For simplicity of notation, we shall write
$(z,\eta,w)$ instead of  $((z,\eta),w)$ for each element of $\Kvs$.
Here, we note that $f\in A(\Db)$
and $\wh{f'} \in \A$ for $f \in \Ao$.
We define the normed linear subspace $\BT$ of $C(\Kvs)$ by
\begin{equation}\label{B}
\BT = \set{\tf \in C(\Kvs) : f \in \Ao}
\end{equation}
with the supremum norm $\V{\tf}_{\Kvs} = \sup_{x \in \Kvs} |\tf(x)|$ for $\tf \in \BT$.
Note that $\V{v}_{\Di} = \V{\widehat{v}}_{\M} = \V{\wh{v}}_{\sa}$ for $v \in \H$.
Thus, we observe that
\begin{equation}\label{norm}
\V{\tf}_{\KS} = \V{\wh{f}\,}_\T + \V{\wh{f'}}_{\sa} = \VS{f}
\qbox{and}\q 
\V{\tf}_{\Ks} = |f(a)| + \V{\wh{f'}}_{\sa} = \Vs{f}
\end{equation}
for every $f \in \Ao$: In fact, since $f\in\Ao\subset A(\Db)$, we have
$\V{f}_{\Di}=\V{\wh{f}\,}_\T$ and $\V{f'}_{\Di}=\V{\wh{f'}}_{\sa}$.
Hence, $\VS{f}=\V{f}_{\Di}+\V{f'}_{\Di}=\V{\wh{f}\,}_{\T}+\V{\wh{f'}}_{\sa}$.
Let $(\xi_0,\eta_0)\in\T\times\sa$ be such that
$|\wh{f}(\xi_0)|=\V{\wh{f}\,}_\T$ and $|\wh{f'}(\eta_0)|=\V{\wh{f'}}_{\sa}$.
Such a pair of points do exist, since $\wh{f}$ and $\wh{f'}$ are continuous
on the compact sets $\T$ and $\sa$, respectively.
Choose $w_0\in\T$ so that
$|\wh{f}(\xi_0)+\wh{f'}(\eta_0)w_0|=|\wh{f}(\xi_0)|+|\wh{f'}(\eta_0)|$.
We obtain
\[
\VS{f}=
\V{\wh{f}\,}_\T+\V{\wh{f'}}_{\sa}
=|\wh{f}(\xi_0)+\wh{f'}(\eta_0)w_0)|
\leq
\V{\tf}_{\KS}
\leq
\V{\wh{f}\,}_\T+\V{\wh{f'}}_{\sa}
=\VS{f},
\]
which prove the first equalities in \eqref{norm}.
We can prove the second equalities in \eqref{norm} by a similar argument
to the above, and we omit it.

We shall prove that $\V{\tf}_{\KC}= \VC{f}$ for every $f\in\Ao$.


\begin{prop}\label{prop2.1}
For each $f \in \Ao$,
\begin{equation}\label{prop2.1.1}
\V{\tf}_{\KC} = \sup_{(\eta,\eta,w)\in\KC} |\wh{f}(\eta) + \hf(\eta)w|
= \VC{f}.
\end{equation}
\end{prop}

\begin{proof}
Let $f\in\Ao$ and $z\in\Di$.
There exists $w_z\in\T$ such that
$|\wh{f}(z)|+|\wh{f'}(z)|=|\wh{f}(z)+\wh{f'}(z)w_z)|$.
Since $f,f'w_z\in\H$, we have
$\wh{f+f'w_z}=\wh{f}+\wh{f'}w_z$ and
\[
|\wh{f}(z)|+|\wh{f'}(z)|=|\wh{f}(z)+\wh{f'}(z)w_z|
\leq\sup_{\eta\in\sa}|\wh{f}(\eta)+\wh{f'}(\eta)w_z|
\leq\V{\tf}_{\KC}.
\]
Since $z\in\Di$ is arbitrarily chosen, we obtain
$\VC{f}\leq\V{\tf}_{\KC}$.

By the continuity of $\tf$, there exists $\eta_0\in\sa$ and $w_0\in\T$
such that $|\wh{f}(\eta_0)+\wh{f'}(\eta_0)w_0|=\V{\tf}_{\KC}$.
Since $\Di$ is dense in $\M$ by the Corona theorem, for each $\e>0$
there exists $z_0\in\Di$ such that
$|\tf(\eta_0,\eta_0,w_0)-\tf(z_0,z_0,w_0)|<\e$.
Then we obtain
\[
\V{\tf}_{\KC}=|\wh{f}(\eta_0)+\wh{f'}(\eta_0)w_0|
=|\tf(\eta_0,\eta_0,w_0)|
\leq|\tf(z_0,z_0,w_0)|+\e=|\wh{f}(z_0)+\wh{f'}(z_0)w_0|+\e,
\]
and therefore, $\V{\tf}_{\KC}\leq\VC{f}+\e$.
Since $\e>0$ is arbitrarily chosen, we get $\V{\tf}_{\KC}\leq\VC{f}$,
and consequently, $\V{\tf}_{\KC}=\VC{f}$.
\end{proof}

Let $\Kvs\in\set{\KS,\KC,\Ks}$.
For each $x \in \Kvs$, we define the point 
evaluation functional $\d_x \colon \BT \to \C$
at $x$ by
$$
\d_x(\tf) = \tf(x)
\qq (\tf \in \BT).
$$
Define the complex linear operator $I_0\colon\H\to\H$ by
\begin{equation}\label{I}
I_0(v)(z) = \int_{[0,z]} v(\zeta)\,d\zeta
\qq (v\in\H, \ z \in \Di),
\end{equation}
where $[0,z]$ is the straight line interval from $0$ to $z$.

We shall show that $\BT$ separates the points of $\Kvs$
in the following two propositions, in which the operator
$I_0$ plays an important role.


\begin{prop}\label{prop2.2}
Let $\Kvs\in\set{\KS,\Ks}$.
For each pair of distinct points $x_1, x_2 \in \Kvs$
there exists $\tf \in \BT$ such that $\tf(x_1) \neq \tf(x_2)$.
\end{prop}

\begin{proof}
Let $x_j = (z_j, \eta_j, w_j) \in \Kvs$ for $j=1,2$ with $x_1 \neq x_2$. 
Suppose first that $z_1 \neq z_2$. We  set $\xi_j = \wh{\id}(\eta_j)$
for $j=1,2$.
Then $\xi_j\in\T$, since $\eta_j\in\sa$.
We have three possible cases to consider.

{\bf Case 1.}
$\xi_1, \xi_2 \not\in \set{z_1,z_2}$.
Let $f(z) = (z-z_2)(z-\xi_1)^2(z-\xi_2)^2 \in \Ao$. Then we obtain
$\wh{f}(z_1) \neq 0 = \wh{f}(z_2)$ and $\wh{f'}(\xi_1) = 0 = \wh{f'}(\xi_2)$.
Equality \eqref{tf} shows that $\tf(x_1) = \wh{f}(z_1) \neq 0 = \tf(x_2)$.

{\bf Case 2.}
$\xi_j \in \set{z_1,z_2}$ and $\xi_k \not\in \set{z_1,z_2}$
for $j, k \in \set{1,2}$ with $j \neq k$.
Without loss of generality, we may and do assume that
$\xi_1 \in \set{z_1,z_2}$ and $\xi_2 \not\in \set{z_1,z_2}$. 
For functions $g_1(z) = (2z-3z_1+z_2)(z-z_2)^2 \in \Ao$
and $g_2(z) = (2z-3z_1+\xi_2)(z-\xi_2)^2 \in \Ao$, we see that
\[
\wh{g_1}(z_1) \neq 0 = \wh{g_1}(z_2) = \wh{g_1'}(z_1) = \wh{g_1'}(z_2)
\q\mbox{and}\q
\wh{g_2}(z_1) \neq 0 = \wh{g_2}(\xi_2) = \wh{g_2'}(z_1) = \wh{g_2'}(\xi_2).
\]
We set $g = g_1g_2 \in \Ao$, and then $\wh{g}(z_1) \neq 0 = \wh{g}(z_2)$.
The multiplication law, $g' = {g_1}' g_2 + g_1 {g_2}'$, shows that
$\wh{g'}(z_1) = 0 = \wh{g'}(z_2) = \wh{g'}(\xi_2)$.
Because $\xi_1 \in \set{z_1,z_2}$, we get $\wh{g'}(\xi_1)= 0$.
Since $g'\in A(\Db)$, we have $\wh{g'}(\xi)=\wh{g'}(\eta)$
for all $\xi\in\T$ and $\eta\in\M_\xi$.
Therefore, $\wh{g'}(\eta_j) = \wh{g'}(\xi_j) = 0$ for $j=1,2$.
It follows from \eqref{tf} that $\widetilde{g}(x_1) = \wh{g}(z_1) \neq 0 = \widetilde{g}(x_2)$.

{\bf Case 3.}
$\xi_1, \xi_2 \in \set{z_1,z_2}$.
Let $h(z) = (2z-3z_1+z_2)(z-z_2)^2 \in \Ao$, and then we see that
$\wh{h}(z_1) \neq 0 = \wh{h}(z_2)$ and $\wh{{h}'}(z_1) = 0 = \wh{{h}'}(z_2)$.
By the choice of $\xi_j$, we have $\wh{{h}'}(\xi_j) = 0$ for $j=1,2$.
Equality \eqref{tf} shows that $\t{h}(x_1) = \wh{h}(z_1) \neq 0 = \t{h}(x_2)$.

Now we assume that $z_1 = z_2$ and $w_1 \neq w_2$.
We have that
$\t{\id}(x_1) = z_1 + w_1 \neq z_2 + w_2 = \t{\id}(x_2)$ by \eqref{tf}.

If $z_1 = z_2$ and $w_1 = w_2$, then $\eta_1 \neq \eta_2$.
Since the image $\A$ of the Gelfand transform of $\H$ separates the points of $\M$,
there exists $v \in \H$ such that $\wh{v}(\eta_1) \neq \wh{v}(\eta_2)$.
Because $z_1 = z_2$ and $w_1 = w_2$, we see that $\t{I_0(v)}(x_1) \neq \t{I_0(v)}(x_2)$
by \eqref{tf} and \eqref{I}.

Consequently, $\tf(x_1) \neq \tf(x_2)$ for some $\tf \in \BT$, as is claimed.
\end{proof}


\begin{prop}\label{prop2.3}
For each pair of distinct points $x_1, x_2\in\KC$
there exists $\tf \in B_C$ such that $\tf(x_1) \neq \tf(x_2)$.
\end{prop}

\begin{proof}
Let $x_j = (\eta_j,\eta_j,w_j)\in\KC$ for $j=1,2$ with $x_1 \neq x_2$. 
Suppose that $\eta_1 \neq \eta_2$ and $\wh{\id}(\eta_1) = \wh{\id}(\eta_2)$.
Since $\A$ separates the points of $\M$, we can choose $v_0 \in \H$ so that
$\wh{v_0}(\eta_1) = 1$ and $\wh{v_0}(\eta_2) = 0$.
We set $f_0 = I_0(v_0)$, and then $f_0 \in A(\Db)$.
Since $\wh{\id}(\eta_1) = \wh{\id}(\eta_2)$ and $f_0 \in A(\Db)$,
we have $\wh{f_0}(\eta_1) = \wh{f_0}(\eta_2)$ with
$\wh{{f_0}'}(\eta_1) = 1$ and $\wh{{f_0}'}(\eta_2) = 0$.
Thus, $\t{f_0}(x_1) = \wh{f_0}(\eta_1) + w_1 \neq \wh{f_0}(\eta_2) = \t{f_0}(x_2)$
by \eqref{tf}.
We now assume that $\eta_1 \neq \eta_2$ and $\wh{\id}(\eta_1) \neq \wh{\id}(\eta_2)$.
We set $\xi_j = \wh{\id}(\eta_j)$ for $j=1,2$, and
$f_1(z) = (2z-3\xi_1+\xi_2)(z-\xi_2)^2 \in A(\Db)$.
Then we get $\wh{f_1}(\xi_1) \neq 0 = \wh{f_1}(\xi_2)$ and
$\wh{f_1'}(\xi_1) = \wh{f_1'}(\xi_2) = 0$. Therefore,
$\t{f_1}(x_1) \neq 0 = \t{f_1}(x_2)$ by \eqref{tf}.

Suppose now that $\eta_1 = \eta_2$, and then $w_1 \neq w_2$ by the choice of $x_j$'s.
If we take $f_2 = \id \in \Ao$, then we see that $\t{f_2}(x_1) \neq \t{f_2}(x_2)$.
\end{proof}

We shall investigate Choquet boundary for $\BT$, which is deeply connected
with the set of all extreme points of the closed unit ball of the dual space
of $\BT$.
To this end, we need some topological properties of the maximal ideal space
$\M$ of $\H$.
Essential ideas of proof of Lemmas \ref{lem2.3} and \ref{lem2.4} below are due to
Professor Osamu Hatori. The authors are grateful for his suggestions and comments
on them.


\begin{lem}\label{lem2.3}
Let $W_0 = \set{z \in \Di : |z| < 1-\e_0}$ and $W_1 = \set{z \in \Di : |z|>1-\e_0}$
for each $\e_0$ with $0 < \e_0 < 1$.
Then $\M \setminus W_0$ is the weak*-closure of $W_1$ in $\M$.
\end{lem}

\begin{proof}
Let $\cl{W_1}$ be the weak*-closure of $W_1$ in $\M$.
Since $\M \setminus W_0$ is a closed subset of $\M$ containing $W_1$,
we see that $\cl{W_1} \subset \M \setminus W_0$.

Now we shall prove that $\M \setminus \Di \subset \cl{W_1}$.
Let $\eta_0 \in \M \setminus \Di$.
By the Corona theorem,
the open unit disc $\Di$ is dense in $\M$, 
and then there exists a net $\set{z_\nu}$ in $\Di$ such that
$\set{z_\nu}$ converges to $\eta_0$ with respect to the weak*-topology.
We assert $\eta_0 \in \cl{W_1}$; in fact, if $\eta_0$ were not in $\cl{W_1}$,
then we would have that $\eta_0$ is in the open set $\M \setminus \cl{W_1}$.
Since $\set{z_\nu}$ converges to $\eta_0$, we may and do assume that
$z_\nu \in \M \setminus \cl{W_1}$ for all $\nu$.
Then we get $z_\nu \in \Di \setminus W_1$ for all $\nu$.
Let $\M_\xi$ be the fiber over $\xi \in \T$.
Because $\eta_0 \in \M \setminus \Di = \cup_{\xi \in \T} \M_\xi$,
there exists a unique $\xi_0 \in \T$ such that $\eta_0 \in \M_{\xi_0}$.
As $z_\nu\in\Di\setminus W_1$, we obtain $|z_\nu| \leq 1-\e_0$,
and then $|z_\nu - \xi_0| \geq \e_0$ for all $\nu$.
Since $\set{z_\nu}$ converges to $\eta_0$ with weak*-topology,
$z_\nu = \wh{\id}(z_\nu)$ tends to $\wh{\id}(\eta_0) = \xi_0$, which contradicts
$|z_\nu - \xi_0| \geq \e_0$ for all $\nu$.
This implies $\eta_0 \in \cl{W_1}$, and
consequently $\M \setminus \Di \subset \cl{W_1}$.

Notice that $\set{z \in \Di : |z|\geq 1-\e_0} \subset \cl{W_1}$.
As $\M \setminus \Di \subset \cl{W_1}$, we get
$\M \setminus W_0 = \set{z \in \Di : |z|\geq 1-\e_0} \cup (\M \setminus \Di)
\subset \cl{W_1}$.
Thus $\cl{W_1} = \M \setminus W_0$ as is claimed.
\end{proof}


\begin{lem}\label{lem2.4}
Let $W_1=\set{z\in\Di:|z|>1-\e_0}$ for each $\e_0$ with $0<\e_0<1$.
Then $\sa$ is contained in the interior of $\cl{W_1}$.
\end{lem}

\begin{proof}
Let $W_0 = \set{z \in \Di : |z| < 1-\e_0}$.
Then $\cl{W_1} = \M \setminus W_0$ by Lemma~\ref{lem2.3}. Because
$\sa \subset \M \setminus \Di$,
we deduce $\sa\subset\cl{W_1}$. Let $\eta_0\in\sa$.
We shall prove that $\eta_0$ is an interior point of $\cl{W_1}$.
If any open neighborhood $O$ of $\eta_0$ were not contained in $\cl{W_1}$,
there would exist a net $\set{\eta_\nu}$ in $\M \setminus \cl{W_1}$ such that
$\set{\eta_\nu}$ converges to $\eta_0$ with respect to the weak*-topology.
Since $\cl{W_1} = \M \setminus W_0$,
we get $\eta_\nu \in W_0$, and hence $|\eta_\nu| < 1-\e_0$ for all $\nu$.
By the choice of $\set{\eta_\nu}$ and $\eta_0$, we obtain
$\eta_\nu = \wh{\id}(\eta_\nu) \to \wh{\id}(\eta_0)$.
Here, $|\wh{\id}(\eta_0)| = 1$, since $\eta_0 \in \sa \subset \M \setminus \Di$.
Therefore, $|\eta_\nu|\to 1$, which contradicts $|\eta_\nu| < 1 - \e_0$ for all $\nu$.
Thus, $\eta_0$ is an interior point of $\cl{W_1}$.
\end{proof}

In the next lemma, we shall prove the existence of ``peaking functions"
in $\Ao$, which will be significant to determine the set of all extreme points
the closed unit ball of the dual space of $\BT$.
We need topological properties of $\M$ as in Lemmas~\ref{lem2.3} and
\ref{lem2.4} to prove the next lemma.


\begin{lem}\label{lem2.5}
Let $\e > 0$, $\eta_0 \in \sa$, and let $V$ be an open neighborhood of $\eta_0$ in $\M$.
There exists $f \in \Ao$ such that
$$
\V{f}_\Di \leq \e, \q
\wh{f'}(\eta_0) = 1 = \V{\wh{f'}}_\M
\qbox{and}\q
|\wh{f'}| < \e
\qbox{on}\q
\M \setminus V.
$$
\end{lem}

\begin{proof}
Let $\e > 0$, $\eta_0 \in \sa$ and $V$ an open neighborhood of $\eta_0$ in $\M$.
Let $2\e_0 = \min\set{\e, 1}$, and we set $W_0 = \set{z \in \Di : |z| < 1-\e_0}$ and 
$W_1 = \set{z \in \Di :|z|>1-\e_0}$.
By Lemma~\ref{lem2.4}, $\eta_0$ is an interior point of $\cl{W_1}$.
Thus, there exists an open neighborhood $O_1$ of $\eta_0$ in $\M$
such that $O_1 \subset \cl{W_1}$.
We set $V_1 = V \cap O_1$, and then $V_1$ is an open neighborhood of $\eta_0$
with $V_1\subset\cl{W_1}$.
Since $\cl{W_1}=\M\setminus W_0$ by Lemma~\ref{lem2.3}, we see that
$V_1\cap\Di\subset\set{z\in\Di:|z|\geq 1-\e_0}$.
Notice that
$\eta_0 \in \sa = \chh$, the Choquet boundary for $\A$,
by Proposition~\ref{prop2.0}.
There exists $v \in \H$ such that $\wh{v}(\eta_0) = 1 = \V{\wh{v}}_\M$
and $|\wh{v}| < \e_0$ on $\M \setminus V_1$.
Define $f \in \Ao$ by $f = I_0(v)$, and then $f' = v$ on $\Di$
and $\wh{f'}(\eta_0) = 1 = \V{\wh{f'}}_\M$.
Since $V_1 \subset V$, we obtain $|\wh{f'}| < \e$ on $\M \setminus V$. 

We show that $\V{f}_\Di \leq \e$.
Let $z_0$ be an arbitrary point of $\Di$.
If $z_0 \in W_0 = \set{z \in \Di : |z| < 1-\e_0}$, then the line interval
$[0,z_0]$ from $0$ to $z_0$ lies in $W_0$.
Since $V_1 \cap \Di \subset \set{z \in \Di : |z|\geq 1-\e_0}$, 
it follows that $|v| < \e_0$ on $[0,z_0]$. Thus,
$$
|f(z_0)| = |I_0(v)(z_0)| = \left| \int_{[0,z_0]} v(\zeta)\,d\zeta \right|
\leq |z_0| \sup_{\zeta \in [0,z_0]} |v(\zeta)| < \e_0.
$$
Hence, $|f(z_0)| < \e_0$ for each $z_0 \in W_0$.

If $z_0 \in \Di \setminus W_0$, then we set $z_r = (1-r\e_0) z_0/|z_0|$
for each $r$ with $1 < r < 2$.
Note that $\e_0 < r\e_0 < 2\e_0 \leq 1$. 
Because $|z_r| = 1-r\e_0 < 1-\e_0$, we obtain $z_r \in W_0$.
Then $|f(z_r)| \leq \e_0$ by the fact proved in the last paragraph.
By the choice of $z_r$ and $z_0$, we get $|z_r - z_0| < r\e_0$.
It follows that
\begin{align*}
|f(z_0)|
&=
|I_0(v)(z_0)| \leq \left| \int_{[0,z_r]} v(\zeta)\,d\zeta \right|
+ \left| \int_{[z_r, z_0]} v(\zeta)\,d\zeta \right| \\
&\leq
|f(z_r)| + |z_r - z_0| \sup_{\zeta \in [z_r,z_0]} |v(\zeta)|
\leq \e_0 + r\e_0 = (1+r)\e_0.
\end{align*}
Letting $r\searrow1$, we obtain $|f(z_0)| \leq 2\e_0 \leq \e$
for $z_0 \in \Di \setminus W_0$.

We thus conclude that $|f| \leq \e$ on $\Di$,
and consequently $\V{f}_\Di \leq \e$.
\end{proof}

Recall that $\BT$ is a normed linear subspace of $C(\Kvs)$, defined by \eqref{B}.
Let $\Lambda\colon\BT\to\C$ be a bounded linear functional with the
operator norm $\V{\Lambda}$.
By the Hahn-Banach theorem, $\Lambda$ is extended to a bounded
linear functional $\Lambda_1$ on $C(\Kvs)$ with $\V{\Lambda}=\V{\Lambda_1}$.
Thus, the Riesz representation theorem ensures that there exists a regular
Borel measure $\mu$ on $\Kvs$ such that $\D\Lambda(\tf)=\int_{\Kvs}\tf\,d\mu$
for every $\tf\in\BT$ and $\V{\Lambda}=\V{\mu}$, where $\V{\mu}$ denotes
the total variation of the measure $\mu$.

We investigate representing measures for the point evaluation functional $\d_x$
with $x\in\Kvs$ to determine the set of extreme points of the closed unit ball
of $\BT$.
We first show that any representing measure $\mu$ for $\d_x$ with
$x\in\KC$ or $x\in\Ks$ is a Dirac measure.
To this end, we shall prove that any such measure $\mu$ is concentrated
on some sets.

Here, we recall that $\DS=\T\times\sa$, $\DC=\set{(\eta,\eta):\eta\in\sa}$ and
$\Ds=\set{a}\times\sa$, $a\in\Di$ by \eqref{D}.
We denote by $\pi$ the natural projection from $\Dvs$ onto $\sa$, that is,
$\pi(z,\eta)=\eta$ for $(z,\eta)\in\Dvs$.


\begin{lem}\label{lem2.6}
Let $\Kvs\in\set{\KS,\KC,\Ks}$, $x_0=(z_0,\eta_0,w_0)\in\Kvs$
and let $V$ be an open neighborhood of $\eta_0$ in $\M$.
Then $\mu(\pi^{-1}(V) \times \T) = 1$
for each representing measure $\mu$ for $\d_{x_0}$.
\end{lem}

\begin{proof}
Let $x_0 = (z_0, \eta_0, w_0)$ be an arbitrary point of $\Kvs$.
Let $\mu$ be a representing measure for $\d_{x_0}$.
Since $\d_{x_0}(\t{\unit}) = 1 = \V{\d_{x_0}}$, we see that $\mu$ is a probability measure
(see, for example, \cite[p. 81]{bro}).

Let $\e > 0$ be an arbitrary positive real number and $V$ an open neighborhood 
of $\eta_0 \in \sa$ in $\M$.
We set $V^c = \sa \setminus V$,
$$
P_{V} = \pi^{-1}(V)\times \T
\q\mbox{and}\q
P_{V^c} = \pi^{-1}(V^c)\times \T.
$$
Then we obtain
\begin{equation}\label{lem2.3.1}
(z_0, \eta_0, w_0) \in P_V, \q
P_V \cup P_{V^c} = \Kvs
\q\mbox{and}\q
P_V \cap P_{V^c} = \emptyset.
\end{equation}
By Lemma~\ref{lem2.5}, there exists $f_0 \in \Ao$ such that
$$
\V{f_0}_\Di \leq \e, \q \wh{f_0'}(\eta_0) = 1 = \V{\wh{f_0'}}_\M \q\mbox{and}\q
|\wh{f_0'}| < \e \qbox{on}\q V^c.
$$
Recall that $\t{f_0}(z, \eta, w) = \wh{f_0}(z) + \wh{f_0'}(\eta)w$ by \eqref{tf}.
By the choice of $f_0$, we have
$$
|\t{f_0}| \leq \e + 1 \q\mbox{on} \ P_V \qq\mbox{and}\qq
|\t{f_0}| < 2\e \q\mbox{on} \ P_{V^c}.
$$
Since $\mu$ is a representing measure for $\d_{x_0}$, we get
$$
\int_{\Kvs} \t{f_0}\,d\mu
= \d_{x_0}(\t{f_0}) = \wh{f_0}(z_0) + \wh{f_0'}(\eta_0)w_0
= \wh{f_0}(z_0) + w_0. 
$$
We derive from the above equalities with \eqref{lem2.3.1} that
\[
1-\e 
\leq|\wh{f_0}(z_0)+w_0| = 
\left| \int_{\Kvs} \t{f_0}\,d\mu \right| 
\leq \left| \int_{P_V} \t{f_0}\,d\mu \right| + \left| \int_{P_{V^c}} \t{f_0}\,d\mu \right|
<(\e + 1)\mu(P_V) + 2\e\mu(P_{V^c}).
\]
Since $\e>0$ is arbitrary, we get $1 \leq \mu(P_V) \leq \mu(\Kvs) = 1$. 
Hence $1 = \mu(P_V) = \mu(\pi^{-1}(V)\times\T)$.
\end{proof}


\begin{lem}\label{lem2.7}
Let $\Kvs\in\set{\KS,\KC,\Ks}$ and $x_0=(z_0,\eta_0,w_0)\in\Kvs$.
Then $\mu(\pi^{-1}(\set{\eta_0}) \times \T) = 1$
for each representing measure $\mu$ for $\d_{x_0}$.
\end{lem}

\begin{proof}
Let $P$ be an open set in $\Kvs=\Dvs\times\T$
with $\pi^{-1}(\set{\eta_0})\times\T\subset P$.
For each $(z,\eta_0,w)\in\pi^{-1}(\set{\eta_0})\times\T$, there exist open sets
$Z, V, W$ such that
$(z,\eta_0,w)\in Z\times V\times W\subset P$ by the definition
of the product topology. Note that $\pi^{-1}(\set{\eta_0})\times\T$ is a
compact set, and then there exist finitely many open sets
$Z_k,V_k,W_k$ such that $\pi^{-1}(\set{\eta_0})\times\T\subset\cup_{k=1}^n
(Z_k\times V_k\times W_k)$ for some $n\in\N$. We now set $V_0=\cap_{k=1}^n V_k$.
Then $V_0$ is an open neighborhood of $\eta_0$. We show that
$\pi^{-1}(V_0)\times\T\subset P$. In fact, if $(z_1,\eta_1,w_1)\in\pi^{-1}(V_0)\times\T$,
then $(z_1,\eta_0,w_1)\in\pi^{-1}(\set{\eta_0})\times\T$.
By the choice of $Z_k,V_k,W_k$, we obtain
$(z_1,\eta_0,w_1)\in Z_m\times V_m\times W_m$
for some $m$ with $1\leq m\leq n$.
We assure from $\eta_1\in V_0\subset V_m$ that
$(z_1,\eta_1,w_1)\subset Z_m\times V_m\times W_m\subset P$,
which implies $\pi^{-1}(V_0)\times\T\subset P$, as is claimed.
We thus have that $1=\mu(\pi^{-1}(V_0)\times\T)\leq\mu(P)$
by Lemma~\ref{lem2.6}.
Since $\mu$ is a probability measure, we get $\mu(P)=1$.
Since $P$ is an arbitrary open set
with $\pi^{-1}(\set{\eta_0})\times\T\subset P$,
the regular measure $\mu$ satisfies $\mu(\pi^{-1}(\set{\eta_0})\times\T)=1$.
\end{proof}

\begin{rem}\label{rem2.1}
If $\Dvs=\DC=\set{(\eta,\eta):\eta\in\sa}$,
then $\pi^{-1}(\set{\eta_0})=\set{(\eta_0,\eta_0)}$.
If $\Dvs=\Ds=\set{a}\times\sa$, then
$\pi^{-1}(\set{\eta_0})=\set{(a,\eta_0)}$.
Lemma~\ref{lem2.7} states that if $x_0=(z_0,\eta_0,w_0)\in\Kvs$
and if $\Kvs\in\set{\KC,\Ks}$,
then $\mu(\set{z_0}\times\set{\eta_0}\times\T)=1$ for every
representing measure $\mu$ for $\d_{x_0}$.
The next lemma says that a similar result is true even if $\Kvs=\KS$.
\end{rem}


\begin{lem}\label{lem2.8}
If $x_0 = (z_0, \eta_0, w_0) \in\KS=\T \times \sa \times \T$ with
$\eta_0 \not\in\M_{z_0}$
then $\mu(\set{z_0} \times \set{\eta_0} \times \T) = 1$
for each representing measure $\mu$ for $\d_{x_0}$.
\end{lem}

\begin{proof}
Let $\e > 0$ and $Z$ an open neighborhood of $z_0$ in $\T$.
We set $Z^c = \T \setminus Z$, $Q_Z = Z \times \set{\eta_0} \times \T$ and
$Q_{Z^c} = Z^c \times \set{\eta_0} \times \T$. Then we see that
$$
Q_Z \cup Q_{Z^c} = \T \times \set{\eta_0} \times \T
\q\mbox{and}\q
Q_Z \cap Q_{Z^c} = \emptyset.
$$
Notice that $\pi^{-1}(\set{\eta_0})=\T\times\set{\eta_0}$,
and hence $\mu(Q_Z \cup Q_{Z^c}) = 1$ by Lemma~\ref{lem2.7}.
For each $m \in \N$, we define a function $g_m \in \Ao$ by
$g_m(z) = \{ (\ov{z_0}\,z + 1)/2 \}^m$ for $z \in \Di$.
We observe that
$$
\wh{g_m}(z_0) = 1 = \V{\wh{g_m}}_{\T}
\qbox{and}\q
|\wh{g_m}(z)| < 1 \qbox{for $z \in \T \setminus \set{z_0}$}.
$$
Set $\xi_0 = \wh{\id}(\eta_0) \in \T$.
By the assumption, $\eta_0\not\in\M_{z_0}$.
Thus $\xi_0 \neq z_0$,
and we get $|(\ov{z_0}\,\xi_0+1)/2| < 1$. Therefore,
$m\{(\ov{z_0}\,\xi_0+1)/2\}^{m-1}$ converges to $0$ as $m \to \infty$.
In addition, since $|\wh{g_m}(z)| < 1$ for all $z \in \T \setminus \set{z_0}$,
we can find $m_0 \in \N$ such that
$|\wh{g_{m_0}}| < \e$ on $Z^c$ and
$$
|\wh{{g_{m_0}}'}(\eta_0)| = 
\left| \fr{m_0 \ov{z_0}}{2} \left( \fr{\ov{z_0}\,\xi_0 + 1}{2} \right)^{m_0-1} \right|
< \e.
$$
We set $f_1 = g_{m_0} \in \Ao$, and hence
$$
\wh{f_1}(z_0) = 1 = \V{\wh{f_1}}_{\T}, \q
|\wh{f_1}| < \e \q\mbox{on}\ Z^c
\q\mbox{and}\q
|\wh{f_1'}(\eta_0)| < \e.
$$
Equality \eqref{tf} with the above shows that
$|\d_{x_0}(\t{f_1})| = |\wh{f_1}(z_0) + \wh{f_1'}(\eta_0)w_0| \geq 1 - \e$.
We deduce from $\mu(Q_Z \cup Q_{Z^c}) = 1$ that
\begin{align*}
1-\e
&\leq
|\d_{x_0}(\t{f_1})| = \left| \int_{Q_Z \cup Q_{Z^c}} \t{f_1}\,d\mu \right|\\
&\leq
\left| \int_{Q_Z} \left\{ \wh{f_1}(z) + \wh{f_1'}(\eta_0)w \right\} \,d\mu \right|
+ \left| \int_{Q_{Z^c}} \left\{ \wh{f_1}(z) + \wh{f_1'}(\eta_0)w \right\} \,d\mu \right|\\
&\leq
(1+\e)\mu(Q_Z) + 2\e\mu(Q_{Z^c}).
\end{align*}
Because $\e>0$ is arbitrarily chosen, we get $1 \leq \mu(Q_Z)$.
Since $\mu$ is a probability measure,
$1 = \mu(Q_Z) = \mu(Z \times \set{\eta_0} \times \T)$ for all
open neighborhood $Z$ of $z_0$.
By a similar argument to proof of Lemma~\ref{lem2.7}, we observe that
$\mu(\set{z_0} \times \set{\eta_0} \times \T) = 1$.
\end{proof}

Now we are ready to characterize representing measure
for $x_0\in\Kvs$.
Applying Lemmas~\ref{lem2.7} and \ref{lem2.8}, we shall prove that
the representing measure for $\d_{x_0}$ is concentrated on
the point $x_0\in\Kvs$.


\begin{lem}\label{lem2.9}
Let $x_0 = (z_0, \eta_0, w_0) \in \Kvs$
with $\Kvs\in\set{\KS,\KC,\Ks}.$
 Assume further that
$\eta_0 \not\in \M_{z_0}$ if $x_0\in\KS$.
Then the Dirac measure concentrated at $x_0$ is the unique
representing measure for $\d_{x_0}$.
\end{lem}

\begin{proof}
Let $x_0 = (z_0, \eta_0, w_0) \in \Kvs$. We assume that
$\eta_0 \not\in \M_{z_0}$ if $x_0\in\KS$.
Let $\mu$ be a representing measure for $\d_{x_0}$.
We shall prove that $\mu(\set{x_0}) = 1$.
Set $R = \set{z_0} \times \set{\eta_0} \times \T$,
and thus $\mu(R)=1$ by Lemma~\ref{lem2.7} (see, also Remark~\ref{rem2.1})
and Lemma~\ref{lem2.8}.

Let $f_0 = \id-\wh{\id}(z_0)\unit \in \Ao$.
Then $\wh{f_0}(z_0) = 0$ and $\wh{f_0'}(\eta_0) = 1$.
By \eqref{tf},
$\d_{x_0}(\t{f_0}) = \wh{f_0}(z_0) + \wh{f_0'}(\eta_0)w_0 = w_0$.
Since $\mu$ is a representing measure for $\d_{x_0}$, 
the fact $\mu(R) = 1$ shows that
\begin{align*}
w_0
&= 
\int_R \t{f_0}\,d\mu = \int_R \left\{ \wh{f_0}(z_0) + \wh{f_0'}(\eta_0)w \right\} d\mu 
= \int_R w\,d\mu.
\end{align*}
Hence, $\D \int_R (w-w_0)\,d\mu =0$. Multiplying
this equality by $-\ov{w_0}$, we have
$\D \int_R (1-\ov{w_0}w)\,d\mu = 0$.
Since $\mu$ is a positive measure,
$$
\int_R (1-\Re (\ov{w_0}w))\,d\mu = \Re \int_R (1-\ov{w_0}w)\,d\mu = 0,
$$
where $\Re\zeta$ is the real part of a complex number $\zeta$.
Here, we note that $1-\Re(\ov{w_0}w)> 0$ on
$R_0=\set{z_0}\times\set{\eta_0}\times(\T\setminus\set{w_0})\subset R$.
We thus obtain 
$\mu(R_0)=0$, and hence
$\mu(\set{z_0} \times \set{\eta_0} \times \set{w_0})=\mu(R)=1$.
This implies that $\mu(\set{x_0}) = 1$.
We conclude that $\mu$ is a Dirac measure concentrated at $x_0 = (z_0, \eta_0, w_0)$,
as is claimed. 
\end{proof}

Let $\chb$ be the Choquet boundary for $\BT$, that is, the set of all
$x \in \Kvs$ such that
$\d_x$ is an extreme point of $(\BT)_1^*$, the closed unit ball of the dual space
$(\BT)^*$ of $(\BT, \V{\cdot}_{\Kvs})$. 
We denote by $\ext{(\BT)_1^*}$ the set of all extreme points of $(\BT)_1^*$.
By the Arens-Kelley theorem, we see that
$\ext{(\BT)_1^*} = \set{\la \d_x : \la \in \T, \ x \in \chb}$
(cf. \cite[Corollary~2.3.6 and Theorem~2.3.8]{fle1}).

The next lemma gives the characterization of $\chb$.
In other words, we can determine $\ext{(\BT)_1^*}$, which is essential
to our arguments below.


\begin{lem}\label{lem2.10}
Let $x_0 = (z_0, \eta_0, w_0) \in \Kvs$
with $\Kvs\in\set{\KS,\KC,\Ks}$. Assume that
$\eta_0 \not\in \M_{z_0}$ if $x_0\in\KS$.
Then $x_0\in\chb$.
\end{lem}

\begin{proof}
Let $x_0 = (z_0, \eta_0, w_0) \in\Kvs$, and $\eta_0 \not\in \M_{z_0}$
if $x_0\in\KS$.
We prove that $\d_{x_0}$ is an extreme point of $(\BT)^*_1$.
Let $\chi_1, \chi_2 \in (\BT)_1^*$ be such that $\d_{x_0} = (\chi_1 + \chi_2)/2$.
Then $\chi_1(\t{\unit}) + \chi_2(\t{\unit}) = 2\d_{x_0}(\t{\unit}) = 2$ by \eqref{tf}.
Since $\chi_j \in (\BT)_1^*$, we get $|\chi_j(\t{\unit})| \leq 1$ and thus
$\chi_j(\t{\unit}) = 1 = \V{\chi_j}$ for $j = 1,2$.
Let $\tau_j$ be a representing measure for $\chi_j$, 
that is, $\D \chi_j(\t{f}) = \int_{\Kvs} \t{f} \,d\tau_j$ for $\t{f} \in \BT$
with $\V{\chi_j} = \V{\tau_j}$, the total variation of $\tau_j$, for $j=1,2$.
We see that $\tau_j$ is a probability measure on $\Kvs$ (cf. \cite[p. 81]{bro}).
Then $(\tau_1 + \tau_2)/2$ is a representing measure for $\d_{x_0}$.
It follows from Lemma~\ref{lem2.9} that $(\tau_1 + \tau_2)/2$ is
the Dirac measure $\gam_{x_0}$ concentrated at $x_0$. 
Because $\tau_j$ is a positive measure, $\tau_j(E) = 0$ for each
Borel set $E$ with $x_0 \not\in E$. Hence $\tau_j = \gam_{x_0}$ for $j=1,2$,
and consequently $\chi_1 = \chi_2$. Therefore, 
$\d_{x_0}$ is an extreme point of $(\BT)_1^*$, and thus $x_0 \in \chb$.
\end{proof}

\begin{rem}\label{rem2.2}
By Lemma~\ref{lem2.10}, we see that
$\KC=\Ch{B_C}$ and $\Ks=\Ch{B_\sigma}$.
The authors are not sure whether or not $\KS=\Ch{B_\Sigma}$ holds.
But, the following result on $\KS$ is enough for our proof.
\end{rem}


\begin{lem}\label{lem2.11.0}
Define $\KS^\circ = \set{(z,\eta,w) \in\KS : \eta\not\in \M_z}$.
Then the set $\KS^\circ$ is dense in $\KS=\DS\times \T$.
\end{lem}

\begin{proof}
Let $x_0 = (z_0, \eta_0, w_0) \in\KS$ and $O$ an
open neighborhood of $x_0$ in $\KS$.
We show that $O \cap \XS \neq \emptyset$.
To this end, we need to consider the case when $x_0 \not\in\XS$,
and hence $\eta_0 \in \M_{z_0}$.
By the definition of the product topology, we can choose open sets
$Z,W\subset \T$ and $V \subset \sa$ such that
$(z_0,\eta_0,w_0)\in Z\times V \times W\subset O$.
Choose $z_1 \in Z$ with $z_0 \neq z_1$, and then we obtain
$(z_1, \eta_0, w_0) \in O$ and
$\wh{\id}(\eta_0) = z_0 \neq z_1$. Hence $\eta_0 \not\in  \M_{z_1}$,
and therefore, $(z_1, \eta_0, w_0) \in O \cap\XS$.
This shows that $\XS$ is dense in $\KS$.
\end{proof}

We now define an isometry, $S$, on $\BT$ and investigate it.
To do this, we introduce a kind of adjoint operator, $S_*$,
on the dual space $(\BT)^*$ of $\BT$.
In the rest of this section, we shall prove that the map $S_*$
preserves extreme points of $(\BT)^*_1$ in some sense.

Let $(\Kvs, \VSS{\cdot})$ be one of $(\KS, \VS{\cdot})$ or
$(\KC, \VC{\cdot})$ or $(\Ks, \Vs{\cdot})$.
For a surjective, not necessarily linear, isometry $T$ from $(\Ao, \VSS{\cdot})$
onto itself, define the mapping $T_0$ on $(\Ao, \VSS{\cdot})$ by
\begin{equation}\label{T_0}
T_0 = T - T(0).
\end{equation}
By the Mazur-Ulam theorem \cite{maz, vai}, $T_0$ is a 
surjective, {\it real linear} isometry from $(\Ao, \VSS{\cdot})$ onto itself. 
We define the map $U \colon (\Ao, \VSS{\cdot}) \to (\BT, \DV{\cdot}_{\Kvs})$ by
$$
U(f) = \tf 
\qq (f \in \Ao),
$$
where $\tf$ is defined as in \eqref{tf}. By \eqref{norm} and \eqref{prop2.1.1},
$U$ is a surjective complex linear isometry.
Denote $UT_0U^{-1}$ by $S$; the mapping $S \colon \BT \to \BT$ is
a well-defined surjective {\it real linear} isometry,
since $U \colon (\Ao, \VSS{\cdot}) \to (\BT, \DV{\cdot}_{\Kvs})$ is a surjective
complex linear isometry. 
$$
\begin{CD} 
\Ao @>{T_0}>> \Ao \\ 
@V{U}VV
@VV{U}V \\ 
\BT @>>{S}> \BT
\end{CD} 
$$ 
By the definition of the mapping $S$, we derive
\begin{equation}\label{S}
S(\t{f}) = \widetilde{T_0(f)} \qq (\t{f} \in \BT).
\end{equation}
We define the mapping $S_* \colon (\BT)^* \to (\BT)^*$ by
\begin{equation}\label{S_*}
S_*(\chi)(\tf) = \Re[\chi(S(\t{f}))] - i \, \Re [\chi(S(i\t{f}))]
\end{equation}
for $\chi \in (\BT)^*$ and $\tf \in \BT$.
Here we note that $S_*$ coincides with the usual adjoint operator,
provided that $S$ is complex linear.
The mapping $S_*$ is a surjective real linear 
isometry with respect to the operator norm on $(\BT)^*$
(see, for example, \cite[Proposition~5.17]{rud}). 
This shows that $S_*$ preserves extreme points, that is, 
$S_*(\ext{(\BT)_1^*}) = \ext{(\BT)_1^*}$.

We define the topological subspace $\B$ of $(\BT)^*_1$ with the weak*-topology by
$$
\B=\set{\la\d_x: \la\in\T, \, x\in\Kvs}.
$$
Now we consider the case when $\Kvs\in\set{\KC,\Ks}$.
Then $\Kvs=\Ch{\BT}$ by Remark~\ref{rem2.2}.
Applying the Arens-Kelley theorem to the above, we obtain
$$
\ext{(\BT)_1^*}=\set{\la\d_x:\la\in\T,\ x\in\Ch{\BT}}=\B.
$$
Since $S_*$ preserves extreme points, we get $S_*(\B)=\B$.
In the rest of this section, we shall prove that the equality is true
even if $\Kvs=\KS$.

For $\Kvs\in\set{\KS,\KC,\Ks}$, we define the mapping $\h\colon\T\times\Kvs\to\B$ by
\begin{equation}\label{h}
\h(\la,x)=\la\d_x
\qq ((\la,x)\in\T\times\Kvs).
\end{equation}
We shall prove that $\h$ is a homeomorphism.
The property on $\h$ is quite important to show that $S_*$ preserves the set $\B$.
The ideas of the following two lemmas are due to Professor Kazuhiro Kawamura.
The authors are grateful for his suggestions and comments.


\begin{lem}\label{lem2.12}
Let $\T \times \Kvs$ be the compact Hausdorff space
with the product topology.
Then the mapping $\h\colon\T\times\Kvs\to\B$
is a homeomorphism.
In particular, $\h(\T \times \Ch{\BT}) = \ext{(\BT)_1^*}$.
\end{lem}

\begin{proof}
The map $\h$ is surjective by definition.
Because $\BT$ contains the constant function $\t{\unit}$
and separates the points of $\Kvs$
(see Propositions~\ref{prop2.2} and \ref{prop2.3}),
we see that $\h$ is injective.
By the definition of the weak*-topology, we observe that $\h$ is a continuous
map from the compact space $\T \times\Kvs$ onto the Hausdorff space $\B$.
Hence, $\h\colon\T\times\Kvs\to\B$ is a homeomorphism.
By the Arens-Kelley theorem, $\ext{(\BT)_1^*}=\set{\la\d_x:\la\in\T,\,x\in\Ch{\BT}}$,
and thus $\ext{(\BT)_1^*}=\h(\T\times\Ch{\BT})$.
\end{proof}

The next lemma is a generalization of the well-known fact that the adjoint
operator of a surjective complex linear isometry on $(\Ao,\VSS{\cdot})$
preserves the set $\ext{(\BT)_1^*}$ of all extreme points of $(\BT)_1^*$.
In fact, if $\Kvs=\KC$ or $\Kvs=\Ks$, then $\B=\ext{(\BT)_1^*}$, and thus
$S_*(\B)=\B$, as mentioned above.
We shall prove that $S_*(\B)=\B$ holds even if $\Kvs=\KS$


\begin{lem}\label{lem2.13}
The mapping $S_*$ preserves $\B$, that is,
$S_*(\B)=\B$.
\end{lem}

\begin{proof}
We need to consider the case when $\Kvs=\KS$, and then $\B=\BTS$.
Let $\h$ be the homeomorphism as in \eqref{h}.
Lemma~\ref{lem2.12} shows that
$$
\h(\T \times \Ch{\BS}) = \ext{(\BS)_1^*} = S_*(\ext{(\BS)_1^*}).
$$
By Lemma~\ref{lem2.10}, $\KS^\circ\subset\Ch{\BS}\subset\KS$,
and hence we observe that
\begin{align*}
S_*(\h(\T \times\KS^\circ))
&\subset
S_*(\h(\T\times\Ch{\BS})) = S_*(\ext{(\BS)_1^*}) \\
&=
\ext{(\BS)_1^*} = \h(\T \times \Ch{\BS}) \subset \h(\T \times \KS).
\end{align*}
By the definition of the map $\h$, we get $\h(\T \times \KS) = \BTS$,
and hence $S_*(\h(\T \times \KS^\circ)) \subset \BTS$.
We denote by $\clk{M}{L}$ the closure of a subset $L$ in a topological space $M$.
By Lemma~\ref{lem2.11.0}, $\KS^\circ$ is dense in
$\KS$, and thus $\clk{\KS}{\KS^\circ}=\KS$.
Since the map $\h$ is a homeomorphism, we obtain
$$
\BTS = \h(\T \times \KS)
= \h(\T \times \clk{\KS}{\KS^\circ}) = \clk{\BTS}{\h(\T \times \KS^\circ)},
$$
and thus $S_*(\BTS) = S_*(\clk{\BTS}{\h(\T \times \KS^\circ)})$.
Because $S_* \colon (\BS)^* \to (\BS)^*$ is a surjective isometry
with the operator norm, it is bijective.
By the definition of $S_*$, it is continuous with respect to the weak*-topology
on $(\BS)^*$. In the same way, we can show that $S_*^{-1}$ is continuous on
$(\BS)^*$ as well. Hence $S_* \colon (\BS)^* \to (\BS)^*$ is
a homeomorphism with respect to the weak*-topology.
Since $S_*(\h(\T \times \KS^\circ)) \subset \BTS$, we obtain
\[
S_*(\BTS) 
=S_*(\clk{\BTS}{\h(\T \times \KS^\circ)})
= \clk{\BTS}{S_*(\h(\T \times \KS^\circ))} \subset \BTS,
\]
which implies $S_*(\BTS) \subset \BTS$.
By the same arguments, applied to $S_*^{-1}$ instead of $S_*$, we have
$S_*^{-1}(\BTS) \subset \BTS$.
This shows that $S_*(\BTS) = \BTS$.
\end{proof}


\section{Characterization of the induced map $S$}\label{sect3}

In this section, we use the same notation as in the previous section.
We assume that $S \colon \BT \to \BT$ is a surjective real linear isometry, which
satisfies \eqref{S}, and $S_* \colon(\BT)^* \to (\BT)^*$ is a surjective real linear isometry
defined by \eqref{S_*}.
The mapping $\h\colon\T\times\Kvs\to\B$ is a
homeomorphism as in \eqref{h}.

We shall analyze the isometry $S_*$ on $(\BT)^*$, which leads to the characterization
of the map $S=UT_0U^{-1}$:
The idea of the arguments is a generalization of a well-known proof of the
Banach-Stone theorem.
In fact, if $S$ is complex linear, then the fact that the adjoint operator $S^*$
preserves extreme points implies that $S$ is a weighted composition operator.
Our argument below is a kind of real linear version of the above.


\begin{defn}\label{defn1}
Let $p_1 \colon \T \times \Kvs\to \T$ and $p_2 \colon \T \times\Kvs\to\Kvs$
be the natural projections
from $\T \times\Kvs$ onto the first and second coordinates, respectively.
We define maps $\al \colon \T \times \Kvs \to \T$
and $\Phi \colon \T \times\Kvs\to\Kvs$ by
$\al = p_1 \circ \h^{-1} \circ S_* \circ \h$ and
$\Phi = p_2 \circ \h^{-1} \circ S_* \circ \h$.
$$ 
\begin{CD}
\T \times \Kvs @>{}>> \T \times \Kvs \\
@V{\h}VV
@VV{\h}V \\ 
\B @>>{S_*}> \B
\end{CD} 
$$
Recall that $S_*$ is a real linear isometry with $S_*(\B) = \B$
by Lemma~\ref{lem2.13}, and thus
it is bijective. By the definition of $S_*$, it is continuous with respect to
the weak*-topology. Therefore, $S_*|_{\B}\colon\B\to\B$ is a homeomorphism
with respect to the relative weak*-topology.
Hence $\al$ and $\Phi$ are both well-defined, surjective continuous maps.
\end{defn}

We get $(\h^{-1} \circ S_* \circ \h)(\la,x)
= (\al(\la,x), \Phi(\la,x))$ for $(\la,x) \in \T \times \Kvs$
by the definition of $\al$ and $\Phi$.
Hence $(S_* \circ \h)(\la,x) = \h(\al(\la,x), \Phi(\la,x))$, which implies
$S_*(\la \d_x) = \al(\la,x) \dphi{\la,x}$
for every $(\la,x) \in \T \times \Kvs$. Set, for each $\la \in \T$,
$\al_\la(x) = \al(\la,x)$ and $\Phi_\la(x) = \Phi(\la,x)$ for $x \in \Kvs$.
Then $\al_\la$ and $\Phi_\la$ satisfy
$$
S_*(\la \d_x) = \al_\la(x) \d_{\Phi_\la(x)}
$$
for every $\la\in\T$ and $x\in\Kvs$.

We first investigate the map $\al$.
In fact, since $S_*$ is a real linear isometry, we can prove that
$\al(\cdot,x)$ is a real linear isometry on $\T$ for each $x\in\Kvs$
as well.


\begin{lem}\label{lem3.1}
For each $x \in \Kvs$, $\al_i(x) = i\al_1(x)$ or $\al_i(x) = -i\al_1(x)$.
\end{lem}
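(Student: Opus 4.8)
The plan is to exploit the \emph{real} linearity of $S_*$ together with the single observation that every functional in $\B$ acts as the identity on the constant function $\t{\unit}$. First I would fix $x \in \XA$ and introduce the unimodular point $\la_0 = (1+i)/\sqrt{2} \in \T$, which has the virtue that $\la_0\,\d_x = \tfrac{1}{\sqrt2}\,\d_x + \tfrac{1}{\sqrt2}\,(i\d_x)$ is a \emph{real} linear combination of $\d_x$ and $i\d_x$. Applying $S_*$ and invoking the relation $S_*(\la\d_x) = \al_\la(x)\,\dphi{\la,x}$ recorded just before the lemma, I obtain
\[
\al_{\la_0}(x)\,\dphi{\la_0,x}
= \tfrac{1}{\sqrt2}\,\al_1(x)\,\dphi{1,x} + \tfrac{1}{\sqrt2}\,\al_i(x)\,\dphi{i,x}
\]
as an identity of functionals on $\BT$.

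The key step is then to evaluate both sides at the constant function $\t{\unit} \in \BT$. Since $\d_y(\t{\unit}) = \t{\unit}(y) = 1$ for \emph{every} $y \in \XA$, the evaluation points $\Phi(\cdot,x)$ disappear completely and the displayed identity collapses to the purely scalar relation
\[
\al_{\la_0}(x) = \tfrac{1}{\sqrt2}\bigl( \al_1(x) + \al_i(x) \bigr).
\]
This is the crucial simplification: it allows me to sidestep entirely any question of whether $\Phi(1,x)$, $\Phi(i,x)$ and $\Phi(\la_0,x)$ coincide.

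From here the conclusion is a one-line computation. Because $\al_{\la_0}(x) \in \T$, taking moduli gives $|\al_1(x) + \al_i(x)| = \sqrt2$; squaring yields $2 + 2\Re[\al_1(x)\ov{\al_i(x)}] = 2$, hence $\Re[\al_1(x)\ov{\al_i(x)}] = 0$. Since $\al_1(x)\ov{\al_i(x)}$ is unimodular and purely imaginary it equals $i$ or $-i$, and solving for $\al_i(x)$ gives $\al_i(x) = i\al_1(x)$ or $\al_i(x) = -i\al_1(x)$, as asserted.

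I do not expect a genuine obstacle here. The only point requiring care is that $\t{\unit}$ really lies in $\BT$ and equals the constant $1$, so that $\d_y(\t{\unit}) = 1$ independently of $y$; this is immediate from \eqref{tf}, since $\unit' = 0$ forces $\t{\unit}(z,\eta,w) = \wh{\unit}(z) = 1$. The one temptation I would resist is trying first to prove $\Phi(1,x) = \Phi(i,x)$ and then comparing coefficients of point masses (which would require linear independence of distinct point evaluations, and $\BT$ is not an algebra): evaluation at $\t{\unit}$ renders that detour unnecessary.
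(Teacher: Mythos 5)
Your proposal is correct and follows essentially the same route as the paper: both fix $\la_0 = (1+i)/\sqrt{2}$, use real linearity of $S_*$ to write $\sqrt{2}\,\al_{\la_0}(x)\dphi{\la_0,x} = \al_1(x)\dphi{1,x} + \al_i(x)\dphi{i,x}$, evaluate at the constant function $\t{\unit}$ to collapse to the scalar identity $\sqrt{2}\,\al_{\la_0}(x) = \al_1(x) + \al_i(x)$, and conclude from unimodularity that $\al_i(x)\ov{\al_1(x)} = \pm i$. Your closing remark about why evaluation at $\t{\unit}$ sidesteps any comparison of the points $\Phi(\cdot,x)$ is exactly the mechanism the paper relies on.
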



\begin{proof}
Let $x \in \Kvs$, and we set $\la_0 = (1+i) /\sqrt{2} \in \T$.
By the real linearity of $S_*$, we obtain
\[
\sqrt{2}\, \al_{\la_0}(x) \dphi{\la_0,x}
=S_*(\sqrt{2}\,\la_0\d_x) 
= S_*(\d_x) + S_*(i\d_x)
=\al_1(x)\dphi{1,x} + \al_i(x)\dphi{i,x}.
\]
Evaluating these equalities at $\tio \in \BT$, we get
$\sqrt{2}\, \al_{\la_0}(x) = \al_1(x) + \al_i(x)$. 
Since $|\al_\la(x)| = 1$ for all $\la \in \T$,
we have $\sqrt{2} = |\al_1(x) + \al_i(x)| = |1+\al_i(x)\ov{\al_1(x)}|$.
Then we see that $\al_i(x)\ov{\al_1(x)} = i$ or $\al_i(x)\ov{\al_1(x)} = -i$.
We thus conclude $\al_i(x) = i \al_1(x)$ or $\al_i(x) = -i\al_1(x)$.
\end{proof}

Now we can describe a connection between $\al_1$ and $\al_i$.


\begin{lem}\label{lem3.2}
There exists a continuous function $\ez \colon \Kvs \to \set{\pm 1}$ such that
$S_*(i\d_x) = i\ez(x)\alo(x)\dphi{i,x}$ for every $x \in \Kvs$.
\end{lem}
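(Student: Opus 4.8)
The plan is to leverage Lemma~\ref{lem3.1}, which tells us that at each point $x \in \XA$ we have $\al_i(x) = i\alo(x)$ or $\al_i(x) = -i\alo(x)$, i.e.\ $\al_i(x) = i\ez(x)\alo(x)$ where $\ez(x) \in \set{\pm 1}$ is uniquely determined (since $\alo(x) \neq 0$, the two possibilities $\pm i\alo(x)$ are distinct, so $\ez(x)$ is well-defined as a function). Since $S_*(i\d_x) = \al_i(x)\dphi{i,x}$ by the defining relation $S_*(\la\d_x) = \al_\la(x)\dphi{\la,x}$, substituting gives precisely $S_*(i\d_x) = i\ez(x)\alo(x)\dphi{i,x}$. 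Thus the formula itself is immediate; the entire content of the lemma is the \emph{continuity} of $\ez$.

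To prove continuity, first I would express $\ez$ algebraically in terms of already-continuous data. From $\al_i(x) = i\ez(x)\alo(x)$ we solve $\ez(x) = \al_i(x)/(i\alo(x)) = -i\,\al_i(x)\ov{\alo(x)}$, using $|\alo(x)| = 1$ so that $\alo(x)^{-1} = \ov{\alo(x)}$. Hence
\begin{equation*}
\ez(x) = -i\,\al_i(x)\ov{\alo(x)} \qq (x \in \XA).
\end{equation*}
The functions $\al_1 = \al(1,\cdot)$ and $\al_i = \al(i,\cdot)$ are continuous on $\XA$, because $\al \colon \T \times \XA \to \T$ is continuous by Definition~\ref{defn1} (it is a composition of the homeomorphisms $\mathbf{h}$, $\mathbf{h}^{-1}$, the weak*-homeomorphism $S_*$ restricted to $\B$, and the projection $p_1$), and the inclusions $x \mapsto (1,x)$ and $x \mapsto (i,x)$ are continuous. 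Therefore $\ez$, being built from $\al_1$ and $\al_i$ by multiplication, complex conjugation, and scaling, is a continuous $\C$-valued function. By Lemma~\ref{lem3.1} its values lie in $\set{\pm 1}$, so it is a continuous map into $\set{\pm 1}$, as required.

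The only subtlety — and the single point that needs care rather than being routine — is confirming that the pointwise dichotomy of Lemma~\ref{lem3.1} really does yield a \emph{single-valued} function $\ez$ before one can even speak of its continuity. This is where the normalization $|\alo(x)| = 1$ does the work: at each fixed $x$, the values $i\alo(x)$ and $-i\alo(x)$ are genuinely distinct nonzero complex numbers, so exactly one of the two alternatives in Lemma~\ref{lem3.1} holds and $\ez(x)$ is unambiguously defined. Once single-valuedness is secured, the explicit formula $\ez(x) = -i\,\al_i(x)\ov{\alo(x)}$ reduces continuity to the continuity of $\al$, and nothing further is needed.
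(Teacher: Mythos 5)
Your proof is correct, and it reaches the same destination as the paper with one small but genuine difference in mechanism. The paper handles continuity topologically: it partitions $\XA$ into $K_+ = \set{x \in \XA : \al_i(x) = i\alo(x)}$ and $K_- = \set{x \in \XA : \al_i(x) = -i\alo(x)}$, notes that both sets are closed by continuity of $\al_1$ and $\al_i$ (and disjoint, which is your well-definedness point in disguise), and defines $\ez$ piecewise as $+1$ on $K_+$ and $-1$ on $K_-$; continuity of $\ez$ then follows because the two pieces form a clopen partition. You instead write the closed-form expression $\ez(x) = -i\,\al_i(x)\ov{\alo(x)}$, valid since $|\alo(x)| = 1$, so that continuity of $\ez$ is immediate from continuity of $\al$, and Lemma~\ref{lem3.1} is invoked only to confirm that the values lie in $\set{\pm 1}$. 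Both arguments rest on exactly the same ingredients (the dichotomy of Lemma~\ref{lem3.1} plus continuity of $\al$ from Definition~\ref{defn1}), but your explicit formula is slightly more economical: it makes single-valuedness and continuity fall out of one identity, whereas the paper's version has to observe separately that $K_+ \cap K_- = \emptyset$ and that each set is closed. Your closing remark correctly isolates where the normalization $|\alo(x)| = 1$ does its work; no gaps.
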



\begin{proof}
For each $x \in \Kvs$, we obtain $\al_i(x) = i\alo(x)$ or $\al_i(x) = -i\alo(x)$ by Lemma~\ref{lem3.1}.
We define $E_+$ and $E_-$ by
$$
E_+ = \set{x \in \Kvs : \al_i(x) = i\alo(x)}
\q\mbox{and}\q 
E_- = \set{x \in \Kvs : \al_i(x) = -i\alo(x)}. 
$$
Then we have $\Kvs = E_+ \cup E_-$ and $E_+ \cap E_- = \emptyset$. 
By the continuity of the functions $\al_1 = \al(1,\cdot)$ and $\al_i = \al(i,\cdot)$ on $\Kvs$,
we see that $E_+$ and $E_-$ are both closed subsets of $\Kvs$. 
Hence, the function $\ez \colon \Kvs \to \set{\pm 1}$, defined by
$$
\ez(x) = 
\begin{cases}
\ph{-}1 & x \in E_+ \\[1mm]
-1 & x \in E_-
\end{cases},
$$
is continuous on $\Kvs$.
Then we obtain $\al_i(x) = i\ez(x)\alo(x)$ for every $x \in \Kvs$. 
This shows $S_*(i\d_x) = i\ez(x)\alo(x)\d_{\Phi(i,x)}$ for all $x \in \Kvs$.
\end{proof}

We are ready to prove that $\al(\cdot,x)$ is a real linear isometry on
$\T$ for each $x\in\Kvs$; more explicitly, we have the following identity,
which says that $\Phi_\la$ depends on $\Phi_1$ and $\Phi_i$.


\begin{lem}\label{lem3.3}
Let $\ez$ be the continuous function as in Lemma~\ref{lem3.2}.
For each $\la = r+it \in \T$ with $r,t \in \R$ and $x \in \Kvs$,
\begin{equation}\label{lem3.3.1}
\la^{\ez(x)}\tf(\Phi_\la(x)) = r\tf(\Phi_1(x)) + it\ez(x)\tf(\Phi_i(x))
\end{equation}
for all $\tf \in \BT$.
\end{lem}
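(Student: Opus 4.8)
The plan is to combine the real linearity of $S_*$ with the explicit descriptions of $S_*(\d_x)$ and $S_*(i\d_x)$ already obtained. Writing $\la = r + it$ with $r,t \in \R$, I would decompose $\la\d_x = r\,\d_x + t\,(i\d_x)$ as a \emph{real}-linear combination of $\d_x$ and $i\d_x$ in $(\BT)^*$. Applying $S_*$ and using $S_*(\d_x) = \alo(x)\dphi{1,x}$ together with $S_*(i\d_x) = i\ez(x)\alo(x)\dphi{i,x}$ from Lemma~\ref{lem3.2}, while recalling $S_*(\la\d_x) = \al_\la(x)\dphi{\la,x}$, yields the functional identity
$$
\al_\la(x)\dphi{\la,x} = \alo(x)\bigl( r\,\dphi{1,x} + it\ez(x)\,\dphi{i,x} \bigr).
$$

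Next I would pin down $\al_\la(x)$ by evaluating this identity at the constant function $\tio$. By \eqref{tf} the constant function $\unit$ has vanishing derivative, so $\tio \equiv 1$ on $\XA$ and each point evaluation sends $\tio$ to $1$; this gives $\al_\la(x) = \alo(x)(r + it\ez(x))$. The short computation I expect to matter is the observation that, for $\la \in \T$ and $\ez(x) \in \set{\pm 1}$, one has $r + it\ez(x) = \la^{\ez(x)}$: indeed, when $\ez(x) = 1$ this is $\la$ itself, and when $\ez(x) = -1$ it equals $\ov{\la} = \la^{-1}$ since $|\la| = 1$. Hence $\al_\la(x) = \alo(x)\la^{\ez(x)}$.

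Finally I would evaluate the functional identity at an arbitrary $\tf \in \BT$, using $\dphi{\la,x}(\tf) = \tf(\Phi(\la,x))$ and likewise for the other two terms, to obtain
$$
\al_\la(x)\,\tf(\Phi(\la,x)) = \alo(x)\bigl( r\,\tf(\Phi(1,x)) + it\ez(x)\,\tf(\Phi(i,x)) \bigr).
$$
Dividing by $\alo(x)$, which is legitimate because $|\alo(x)| = 1$, and substituting $\al_\la(x) = \alo(x)\la^{\ez(x)}$ produces exactly \eqref{lem3.3.1}. I do not anticipate a serious obstacle: the only points requiring care are the case distinction $\ez(x) = \pm 1$ used to identify $r + it\ez(x)$ with $\la^{\ez(x)}$, and keeping the real-linear decomposition of $\la\d_x$ straight, since $S_*$ is only real linear and the naive complex-linear manipulation would be invalid.
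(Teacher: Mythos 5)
Your proposal is correct and follows essentially the same route as the paper: decompose $\la\d_x = r\,\d_x + t\,(i\d_x)$ real-linearly, apply $S_*$ using Lemma~\ref{lem3.2}, evaluate at $\tio$ to get $\al_\la(x) = \alo(x)(r+it\ez(x)) = \alo(x)\la^{\ez(x)}$, and then evaluate at an arbitrary $\tf \in \BT$. The only cosmetic difference is that you explicitly divide by $\alo(x)$ while the paper cancels it directly in the functional identity, which is the same step.
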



\begin{proof}
Let $\la = r+it \in \T$ with $r,t \in \R$ and $x \in \Kvs$.
Recall that $S_*(\d_x) = \alo(x)\dphi{1,x}$, and
$S_*(i\d_x) = i\ez(x) \alo(x)\dphi{i,x}$ by Lemma~\ref{lem3.2}.
Because $S_*$ is real linear,
\[
\al_\la(x)\dphi{\la,x}
=S_*(\la \d_x) 
= rS_*(\d_x) + tS_*(i\d_x) 
=r\alo(x)\dphi{1,x} + it\ez(x)\alo(x)\dphi{i,x}.
\]
The evaluation of these equalities at $\tio \in \BT$ shows that
$\al_\la(x) = \alo(x)(r+it\ez(x))$.
Since $r+its_0(x) = \la^{s_0(x)}$, then $\al_\la(x) = \la^{\ez(x)} \alo(x)$.
In particular, $\al(\cdot,x)$ is a real linear isometry on $\T$.
We thus obtain $\la^{\ez(x)}\dphi{\la,x} = r\dphi{1,x} + it\ez(x)\dphi{i,x}$,
which implies that
$\la^{\ez(x)}\tf(\Phi_\la(x)) = r\tf(\Phi_1(x)) + it\ez(x)\tf(\Phi_i(x))$
for all $\tf \in \BT$.
\end{proof}

We next analyze the map $\Phi$, which satisfies
$S_*(\la\d_x)=\al_\la(x)\d_{\Phi_\la(x)}$ for every
$\la\in\T$ and $x\in\Kvs$.
Roughly speaking, we prove that $\Phi_\la(x)$ is independent
from $\la\in\T$.


\begin{defn}\label{defn2}
Recall that $\Kvs=\Dvs\times\T$, where
$$
\DS=\T\times\sa,\q
\DC=\set{(\eta,\eta):\eta\in\sa},
\q\mbox{and}\q
\Ds=\set{a}\times\sa.
$$
We denote by $q_j$ the projection from $\Kvs$
onto the $j$-th coordinate of $\Kvs$ for $j=1,2,3$:
If $\Kvs=\KC$, then we define $q_j(\eta,\eta,w)=\eta$
for $j=1,2$ and $(\eta,\eta,w)\in\KC$.
Let $\Phi\colon \T \times \Kvs \to \Kvs$ be the map as in Definition~\ref{defn1}.
Define
$\phi \colon \T \times \Kvs \to q_1(\Kvs)$, $\psi \colon \T \times \Kvs \to \sa$
and $\varphi \colon \T \times \Kvs \to \T$ by 
$$
\phi = q_1 \circ \Phi, \qq \psi = q_2 \circ \Phi \qq\mbox{and}\qq
\varphi = q_3 \circ \Phi,
$$
respectively.
For each $\la \in \T$, we also define $\phila$, $\psila$ and $\varphila$
by $\phila(x) = \phi(\la,x)$, 
$\psi_\la(x) = \psi(\la,x)$ and $\varphi_\la(x) = \varphi(\la, x)$ for all $x \in \Kvs$.
\end{defn}

By the definition of $\phi$, $\psi$ and $\varphi$, we get
$\Phi_\la(x) = (\phila(x), \psila(x), \varphila(x))$
for every $\la\in \T$ and $x\in\Kvs$; if
$\Kvs=\KS$ then $\phila(x)\in\T$, if
$\Kvs=\KC$ then $\phila(x)=\psila(x)\in\sa$,
and if $\Kvs=\Ks$ then $\phila(x)=a$.
By \eqref{tf}, 
\begin{equation}\label{defn2.1}
\tf(\Phi_\la(x)) = \wh{f}(\phila(x)) + \wh{f'}(\psila(x))\varphila(x)
\end{equation}
for all $f \in \Ao$ and $(\la,x) \in \T \times \Kvs$.
Note that $\phi$, $\psi$ and $\varphi$ are surjective and continuous,
since so is $\Phi$ (see Definition~\ref{defn1}).

\begin{rem}\label{rem3.1}
We notice that $\Phi_\mu(x)=\Phi_{-\mu}(x)$
for $\mu\in\set{1,i}$ and $x\in\Kvs$: 
In fact, equality \eqref{lem3.3.1} shows that
\[
-\tf(\Phi_{-1}(x))=-\tf(\Phi_1(x))
\q\mbox{and}\q
-i\ez(x)\tf(\Phi_{-i}(x))=-i\ez(x)\tf(\Phi_i(x))
\]
for all $\tf\in\BT$ and $x\in\Kvs$.
Since $\BT$ separates the points of $\Kvs$ by Propositions~\ref{prop2.2}
and \ref{prop2.3}, we obtain $\Phi_1(x)=\Phi_{-1}(x)$ and
$\Phi_i(x)=\Phi_{-i}(x)$ for all $x\in\Kvs$.
Consequently, $\phio=\phi_{-1}$, $\psio=\psi_{-1}$ and $\varphio=\varphi_{-1}$.
\end{rem}

We shall prove that $\phila(x)$, $\psila(x)$ and $\varphila(x)$ are independent
from $\la\in\T$, which is a consequence of \eqref{lem3.3.1} and \eqref{defn2.1}.
We need the following elementary properties of $\Ao$ to demonstrate it.
In fact, we shall prove that $\Ao$ separates the points of $\Db$ and $\sa$
in the next two lemmas.


\begin{prop}\label{prop3.4}
Let $z_0,z_1,z_2\in\Db$ with $z_0\not\in\set{z_1,z_2}$, and let
$\eta_1,\eta_2,\eta_3\in\sa$.
There exists $f_0\in\Ao$ such that
\[
\wh{f_0}(z_0) = 1, \q \wh{f_0}(z_1) = 0 = \wh{f_0}(z_2)
\q\mbox{and}\q \wh{f_0'}(\eta_j) = 0 \qq (j=1,2,3).
\]
\end{prop}


\begin{proof}
We set $z_3=\wh{\id}(\eta_1)$, $z_4=\wh{\id}(\eta_2)$ and
$z_5=\wh{\id}(\eta_3)$.
Then $z_j\in\T$ for $j=3,4,5$.

{\bf Case 1.}
Suppose that $z_0 \not\in \set{z_3, z_4, z_5}$.
We set
$$
g(z)=(z-z_1)(z-z_2)(z-z_3)^2(z-z_4)^2(z-z_5)^2\in\Ao,
$$
and then
$\wh{g}(z_0) \neq 0$, $\wh{g}(z_1)=0=\wh{g}(z_2)$ and $\wh{g'}(z_j) = 0$ for $j=3,4,5$.
If we set $f_1 = g/\wh{g}(z_0) \in \Ao$, then we obtain
$$
\wh{f_0}(z_0) = 1, \q \wh{f_0}(z_1) = 0 = \wh{f_0}(z_2)
\q\mbox{and}\q \wh{f_0'}(\eta_j) = 0 \qq (j=1,2,3).
$$

{\bf Case 2.}
If $z_0 \in \set{z_3, z_4, z_5}$, then we define $J=\set{j\in\set{3,4,5}:z_j\neq z_0}$
and $J_0=\set{1,2}\cup J$. Let $j\in J_0$, and set
$$
g_j(z)=(2z+z_j-3z_0)(z-z_j)^2\in\Ao.
$$
Here, we notice that $z_0\neq z_j$.
We see that $\wh{g_j}(z_0)\neq 0$ and $\wh{g_j'}(z_0)=0=\wh{g_j}(z_j)=\wh{g_j'}(z_j)$.
The function $f_j=g_j/\wh{g_j}(z_0)\in\Ao$ satisfies $\wh{f_j}(z_0) = 1$
and $\wh{f_j'}(z_0) = 0 = \wh{f_j}(z_j) = \wh{f_j'}(z_j)$.
We set $f = \Pi_{k\in J_0}f_k\in \Ao$, and then
$\wh{f}(z_0) = 1$ and $\wh{f}(z_j) = 0$. In particular, $\wh{f}(z_1) = 0 = \wh{f}(z_2)$.
We note that
$f' = f_j'\,\Pi_{k\in J_0\setminus\set{j}}f_k+f_j\,(\Pi_{k\in J_0\setminus\set{j}}f_k)'$
by the multiplication law.
Since $\wh{f_j}(z_j)=0=\wh{f_j'}(z_j)$, we obtain $\wh{f'}(z_j)=0$ for each $j\in J_0$.
If $l\not\in J_0$, then $z_l=z_0$.
Set $J_0=\set{k_1,\dots,k_m}$, a finite set. Since $\wh{f_{k_1}}(z_0)=1$ and
$\wh{f_{k_1}'}(z_0)=0$, the multiplication law shows that
\[
\wh{f'}(z_l)
=\wh{f'}(z_0)
=\wh{f_{k_1}}(z_0)\left\{\left(\Pi_{k\in J_0\setminus\set{k_1}}f_k\right)'\right\}\wh{\mathstrut}\,(z_0)
=\left\{\left(\Pi_{k\in J_0\setminus\set{k_1}}f_k\right)'\right\}\wh{\mathstrut}\,(z_0).
\]
By the same reasonings, we have
$\wh{f'}(z_l)
=\left\{\left(\Pi_{k\in J_0\setminus\set{k_1,\cdots,k_{m-1}}}f_k\right)'\right\}\wh{\mathstrut}\,(z_0)
=\wh{f_{k_m}'}(z_0)=0.$
Hence $f$ is a desired function.
\end{proof}


\begin{prop}\label{prop3.5}
For each $\eta_0,\eta_1,\eta_2\in\sa$
with $\eta_0\not\in\set{\eta_1,\eta_2}$,
there exists $f_0\in\Ao$ and $j_0\in\set{0,1,2}$ such that
\[
\wh{f_0'}(\eta_{j_0}) = 1, \q \wh{f_0'}(\eta_k)=0 \qq(k\in\set{0,1,2}\setminus\set{j_0})
\q\mbox{and}\q \wh{f_0}(\eta_j) = 0 \qq (j\in\set{0,1,2}).
\]
\end{prop}


\begin{proof}
Set $z_j=\wh{\id}(\eta_j)$ for $j=0,1,2$.

{\bf Case 1.}
Suppose that $z_0=z_1=z_2$.
Since $\eta_0\not\in\set{\eta_1,\eta_2}$, there exists $v_1\in\H$
such that $\wh{v_1}(\eta_0)=1$ and $\wh{v_1}(\eta_1)=0=\wh{v_1}(\eta_2)$.
We set $g_1=I_0(v_1)\in\Ao$, and then $\wh{g_1'}(\eta_0)=1$ and
$\wh{g_1'}(\eta_1)=0=\wh{g_1'}(\eta_2)$ (see \eqref{I}).
Because $\wh{g_1}$ is continuous on $\Di$,
we have that $\wh{g_1}(\xi)=\wh{g}(\eta)$ for each $\xi\in\T$ and $\eta\in\M_\xi$,
the fiber over $\xi$.
This implies that $\wh{g_1}(\eta_0)=\wh{g_1}(\eta_1)=\wh{g_1}(\eta_2)$,
since $z_0=z_1=z_2$.
Then the function $f_1=g_1-\wh{g_1}(\eta_0)\unit\in\Ao$ satisfies
\[
\wh{f_1'}(\eta_0) = 1, \q \wh{f_1'}(\eta_1) = 0 = \wh{f_1'}(\eta_2)
\q\mbox{and}\q \wh{f_1}(\eta_j) = 0 \qq (j\in\set{0,1,2}).
\]

{\bf Case 2.}
Suppose that $z_0=z_1\neq z_2$.
Setting $g_2(z)=(z-z_0)^2(z-z_2)\in\Ao$,
we see that $\wh{g_2}(z_0)=0=\wh{g}(z_2)$,
$\wh{g_2'}(z_0)=0$ and $\wh{g_2'}(z_2)\neq 0$.
Then $f_2=g_2/\wh{g_2'}(z_2)\in\Ao$ satisfies
\[
\wh{f'_2}(\eta_2)=1, \q \wh{f_2'}(\eta_0) = 0 = \wh{f_2'}(\eta_1)
\q\mbox{and}\q \wh{f_2}(\eta_j) = 0 \qq (j\in\set{0,1,2}).
\]

{\bf Case 3.}
Suppose that $z_0=z_2\neq z_1$.
By a quite similar argument to Case 2, we can find
$f_3\in\Ao$ with
\[
\wh{f'_3}(\eta_1)=1, \q \wh{f_3'}(\eta_0) = 0 = \wh{f_3'}(\eta_2)
\q\mbox{and}\q \wh{f_3}(\eta_j) = 0 \qq (j\in\set{0,1,2}).
\]

{\bf Case 4.}
Now we suppose that $z_0\neq z_1$ and $z_0\neq z_2$. Set
$$
g_4(z)=(z-z_0)(z-z_1)^2(z-z_2)^2\in\Ao.
$$
We see that $\wh{g_4'}(z_0)\neq 0$, $\wh{g_4'}(z_1)=0=\wh{g_4'}(z_2)$ and
$\wh{g_4}(z_j)=0$ for $j=0,1,2$.
Set $f_4=g_4/\wh{g_4'}(z_0)\in\Ao$, and then
\[
\wh{f'_4}(\eta_0)=1, \q \wh{f_4'}(\eta_1) = 0 = \wh{f_4'}(\eta_2)
\q\mbox{and}\q \wh{f_4}(\eta_j) = 0 \qq (j\in\set{0,1,2}).
\]
The proof is complete.
\end{proof}

Before proving that $\phila(x)$ is independent of $\la\in\T$,
we prepare one lemma, which is an essential part of it.

\begin{lem}\label{lem3.6}
Let $\Kvs\in\set{\KS,\KC,\Ks}$. 
Then $\phi_\la(x)\in\set{\phio(x),\phii(x)}$ for all $\la\in\T$ and $x\in\Kvs$.
\end{lem}


\begin{proof}
Note first that $\phi$ is a map from $\T\times\Kvs$ to $q_1(\Kvs)$.
If $\Kvs=\Ks$, then $q_1(\Kvs)=\set{a}$. 
Thus $\phila(x)=\phio(x)$ for all $\la\in\T$ and $x\in\Ks$.

Let $\Kvs\in\set{\KS,\KC}$ and $x_0 \in \Kvs$.
Suppose, on the contrary, that $\phi_{\la_0}(x_0) \not\in \set{\phio(x_0), \phii(x_0)}$
for some $\la_0 \in \T \setminus \set{\pm 1,\pm i}$
(see Remark~\ref{rem3.1}).

If $\Kvs=\KS$, then by Proposition~\ref{prop3.4}, there exists $f_0\in\Ao$ such that
\[
\wh{f_0}(\phi_{\la_0}(x_0))=1, \q \wh{f_0}(\phio(x_0))=0=\wh{f_0}(\phii(x_0))
\q\mbox{and}\q \wh{f_0'}(\psi_\mu(x_0)) = 0 \qq (\mu\in\set{\la_0,1,i}).
\]
By \eqref{defn2.1}, we have
$\t{f_0}(\Phi_{\la_0}(x_0)) = 1$ and $\t{f_0}(\Phi_1(x_0)) = 0 = \t{f_0}(\Phi_i(x_0))$.
Applying these equalities to \eqref{lem3.3.1} to get $\la_0^{\ez(x_0)} = 0$,
which contradicts $\la_0 \in \T$.

If $\Kvs=\KC$, then $\phi=\psi$ by definition.
Proposition~\ref{prop3.5} shows that there exist
$f_1\in\Ao$ and $\mu_0\in\set{\la_0,1,i}$ such that
\begin{multline*}
\wh{f_1'}(\phi_{\mu_0}(x_0)) = 1, \q \wh{f_1'}(\phi_\nu(x_0))=0
\qq(\nu\in\set{\la_0,1,i}\setminus\set{\mu_0})\\
\q\mbox{and}\q \wh{f_1}(\phi_\mu(x_0)) = 0 \qq(\mu\in\set{\la_0,1,i}).
\end{multline*}
Equality \eqref{defn2.1} shows that
$\t{f_1}(\Phi_{\mu_0}(x_0))=\varphi_{\mu_0}(x_0)$ and
$\t{f_1}(\Phi_\nu(x_0))=0$ for $\nu\in\set{\la_0,1,i}\setminus\set{\mu_0}$.
If $\mu_0=\la_0$, then equality \eqref{lem3.3.1} yields
$\la_0^{\ez(x_0)}\varphi_{\la_0}(x_0)=0$. This contradicts
$\la_0\varphi_{\la_0}(x_0)\in\T$. Hence $\mu_0\in\set{1,i}$.
Let $\la_0=r_0+it_0$ with $r_0,t_0\in\R$. If $\mu_0=1$, then
$0=r_0\varphio(x_0)$ by \eqref{lem3.3.1}, which implies $r_0=0$.
Thus $it_0=\la_0\in\T\setminus\set{\pm 1,\pm i}$, a contradiction.
If $\mu_0=i$, we will lead a similar contradiction by the same reasonings.

From the above arguments, we conclude that
$\phila(x_0) \in \set{\phio(x_0), \phii(x_0)}$ for all $\la \in \T$.
Since $x_0\in\Kvs$ was arbitrary, the proof is complete.
\end{proof}

Now we are in a position to show that $\phila$ does not depend
on $\la\in\T$.
We first prove it for $\Kvs=\KS$ and $\Kvs=\Ks$.


\begin{lem}\label{lem3.7}
Let $\Kvs\in\set{\KS,\Ks}$.
Then $\phio \colon \Kvs \to q_1(\Kvs)$ is a surjective and continuous function
with $\phio(x) = \phila(x)$ for all $x\in\Kvs$ and $\la \in \T$.
\end{lem}


\begin{proof}
If $\Kvs=\Ks$, then $\phi$ is a surjective and continuous function
from $\T\times\Ks$ onto $q_1(\Ks)=\set{a}$.
Hence $\phio\colon\Ks\to\set{a}$ is a surjective and continuous function
satisfying $\phio(x)=\phila(x)$ for all $x\in\Ks$ and $\la\in\T$.

Let $\Kvs=\KS$ and $x_0 \in \KS$. We shall prove that $\phio(x_0) = \phii(x_0)$.
Since $\phi$ is a continuous function from $\T\times\KS$ onto $q_1(\KS)=\T$,
the function $\phi(\cdot,x_0)\colon\T\to\T$, which maps $\la\in\T$ to
$\phi(\la,x_0)$, is continuous on $\T$ as well. This implies that the image 
$\phi(\T,x_0)$ of $\T$ under the function $\phi(\cdot,x_0)$ is a connected subset
of $\T$. Because $\phi(\T,x_0)\in\set{\phio(x_0),\phii(x_0)}$ by Lemma~\ref{lem3.6},
we conclude that $\phio(x_0)=\phii(x_0)$. We thus obtain $\phila(x_0)=\phio(x_0)$
for all $\la\in\T$.

We show that $\phio\colon\KS\to\T$ is surjective.
Since $\phi$ is surjective, for each $\xi\in\T$
there exists $(\la_1, x_1) \in \T \times \KS$ such that $\phi(\la_1,x_1) = \xi$.
Then we have that $\phio(x_1) = \phi_{\la_1}(x_1) = \phi(\la_1,x_1) = \xi$.
This shows that $\phio$ is surjective.
\end{proof}

We now prove that the above result holds for $\Kvs=\KC$.


\begin{lem}\label{lem3.8}
The map $\phio \colon \KC \to\sa$ is surjective and continuous.
In addition, $\phio(x) = \phila(x)$ holds for all $x\in\KC$ and $\la \in \T$.
\end{lem}


\begin{proof}
Note that $\phi=\psi$ on $\T\times\KC$ by definition.
Let $x_0 \in \KC$. We shall prove that $\phio(x_0) = \phii(x_0)$.
To this end, suppose not, and then
$\phio(x_0) \neq \phii(x_0)$.
Let $\la_0 = (1+i)/\sqrt{2} \in \T$, and set
$\eta_0=\phi_{\la_0}(x_0)$, $\eta_1=\phio(x_0)$
and $\eta_2=\phii(x_0)$.
We see that $\eta_0\in\set{\eta_1,\eta_2}$
by Lemma~\ref{lem3.6}.
If $\eta_0=\eta_1\neq\eta_2$, Proposition~\ref{prop3.5} shows that
there exist $f_0\in\Ao$ and $j_0\in\set{0,1,2}$ such that
\[
\wh{f_0'}(\eta_{j_0})=1, \q \wh{f_0'}(\eta_k)=0 \qq(k\in\set{0,1,2}\setminus\set{j_0})
\qbox{and}\q \wh{f_0}(\eta_j)=0\qq (j\in\set{0,1,2}).
\]
Substituting these equalities into \eqref{defn2.1}, we get
$\t{f_0}(\Phi_{\mu_0}(x_0))=\varphi_{\mu_0}(x_0)$ for some $\mu_0\in\set{\la_0,1,i}$ and
$\t{f_0}(\Phi_\nu(x_0))=0$ for $\nu\in\set{\la_0,1,i}\setminus\set{\mu_0}$.
We can lead a contradiction by a similar argument to Proof of Lemma~\ref{lem3.6}.
If $\eta_0=\eta_2\neq\eta_1$, we will arrive at a contradiction
by the same reasonings.
Thus, we have $\phio(x_0) = \phii(x_0)$, and therefore,
$\phila(x_0)=\phio(x_0)$ for all $\la\in\T$.
Since $x_0\in\KC$ is arbitrarily chosen, we conclude that
$\phio(x) = \phila(x)$ for all $x\in\KC$ and $\la \in \T$.

We show that $\phio$ is surjective.
For each $\eta\in q_1(\KC)$, we can choose $(\la_1,x_1)\in\T\times\KC$
such that $\phi(\la_1,x_1)=\eta$, since $\phi$ is surjective.
Then $\phio(x_1) = \phi_{\la_1}(x_1) = \phi(\la_1,x_1)=\eta$.
Hence, $\phio$ is surjective.
\end{proof}

We shall prove that $\psila(x)$ is independent of $\la\in\T$.
The proof below is quite similar to that of Lemmas~\ref{lem3.7}.


\begin{lem}\label{lem3.9}
Let $\Kvs\in\set{\KS,\KC,\Ks}$.
Then $\psio \colon \Kvs \to \sa$ is a surjective and continuous map
with $\psio(x) = \psila(x)$ for all $x\in\Kvs$ and $\la \in \T$.
\end{lem}


\begin{proof}
If $\Kvs=\KC$, then Lemma~\ref{lem3.8} yields the conclusion,
since $\phi=\psi$ for $\KC$.
We will consider the case when $\Kvs\in\set{\KS,\Ks}$.
Let $x_0 \in \Kvs$.
By Lemma~\ref{lem3.7}, $\Phi_\la(x_0) = (\phio(x_0), \psila(x_0), \varphila(x_0))$
for $\la \in \T$.
Equality \eqref{defn2.1} is reduced to
\begin{equation}\label{lem3.5.1}
\tf(\Phi_\la(x_0)) = \wh{f}(\phio(x_0)) + \wh{f'}(\psila(x_0)) \varphila(x_0)
\end{equation}
for all $f \in \Ao$ and $\la \in \T$.

First, we show that $\psila(x_0) \in \set{\psio(x_0), \psii(x_0)}$ for all $\la \in \T$.
Suppose, on the contrary, that $\psi_{\la_0}(x_0) \not\in \set{\psio(x_0), \psii(x_0)}$
for some $\la_0 \in \T \setminus \set{1,i}$.
Set $\eta_0=\psi_{\la_0}(x_0)$, $\eta_1=\psio(x_0)$ and $\eta_2=\psii(x_0)$.
Then $\eta_j\in\sa$ for $j=0,1,2$ and $\eta_0\not\in\set{\eta_1,\eta_2}$.
There exists $v_0\in\H$ such that
$\wh{v_0}(\eta_0)=1$ and $\wh{v_0}(\eta_1)=0=\wh{v_0}(\eta_2)$.
The function $f_0\in\Ao$, defined by $f_0=I_0(v_0)-\wh{I_0(v_0)}(\phio(x_0))\unit$,
satisfies that $\wh{f_0}(\phio(x_0))=0$, $\wh{f_0'}(\eta_0)=1$ and
$\wh{f_0'}(\eta_1)=0=\wh{f_0'}(\eta_2)$.
By \eqref{lem3.5.1}, $\t{f_0}(\Phi_{\la_0}(x_0)) = \varphi_{\la_0}(x_0)$ and
$\t{f_0}(\Phi_1(x_0)) = 0 = \t{f_0}(\Phi_i(x_0))$.
If we substitute these equalities into \eqref{lem3.3.1}, we obtain 
$\la_0^{\ez(x_0)}\varphi_{\la_0}(x_0) = 0$, which contradicts $\la_0, \varphi_{\la_0}(x_0)\in\T$.
Consequently, $\psila(x_0)\in\set{\psio(x_0),\psii(x_0)}$ for all $\la \in \T$.

We next prove that $\psio(x_0) = \psii(x_0)$.
Suppose that $\psio(x_0) \neq \psii(x_0)$.
Let $\la_1 = (1+i)/\sqrt{2} \in \T$. Then $\psi_{\la_1}(x_0) \in \set{\psio(x_0), \psii(x_0)}$
as proved above.
If $\psi_{\la_1}(x_0) = \psio(x_0)\neq\psii(x_0)$, then we can choose
$f_1 \in \Ao$ so that 
$$
\wh{f_1}(\phio(x_0)) = 0 = \wh{f_1'}(\psio(x_0))
\qbox{and}\q \wh{f_1'}(\psii(x_0)) = 1
$$
by the same arguments as in the previous paragraph.
Applying these equalities to \eqref{lem3.5.1}, we have $\tf(\Phi_i(x_0)) = \varphii(x_0)$
and $\tf(\Phi_1(x_0)) = 0 = \tf(\Phi_{\la_1}(x_0))$, since $\psi_{\la_1}(x_0) = \psio(x_0)$.
By \eqref{lem3.3.1}, we get $0 = is_0(x_0)\varphi_i(x_0)$, which is impossible.
If $\psi_{\la_1}(x_0) = \psii(x_0)\neq\psio(x_0)$, then we will reach
a contradiction by a quite similar argument.
Therefore, we conclude that $\psio(x_0)=\psii(x_0)$.
Consequently $\psila(x) = \psio(x)$ for all $\la \in \T$ and $x \in \Kvs$.

Finally, since $\psi$ is surjective, for each $\eta \in \sa$ there exists 
$(\la, x) \in\T\times\Kvs$
such that $\psi(\la,x) = \eta$. We thus obtain
$\eta = \psila(x) = \psio(x)$, which shows the surjectivity of $\psio$.
\end{proof}

We investigate the function $\varphila$.
To be more explicit, we shall prove, for each $x\in\Kvs$, that
$\varphi_i(x)$ is $\varphi_1(x)$ or $-\varphi_1(x)$.


\begin{lem}\label{lem3.10}
Let $\Kvs\in\set{\KS,\KC,\Ks}$.
There exists a continuous function $\eo \colon \Kvs \to \set{\pm 1}$
such that $\varphii(x) = \eo(x)\varphio(x)$ for all $x \in \Kvs$.
\end{lem}


\begin{proof}
Fix an arbitrary $x_0\in\Kvs$. According to Lemmas~\ref{lem3.7}, \ref{lem3.8}
and \ref{lem3.9},
we can write
$\Phi_\la(x_0) = (\phio(x_0),\psio(x_0), \varphila(x_0))$ for all $\la \in \T$. 
Let $\la_0 = (1+i)/\sqrt{2} \in \T$ and
$f_0(z) = z - \phio(x_0)\unit\in \Ao$.
Then $\wh{f_0}(\phio(x_0)) = 0$ and $\wh{f_0'} = 1$ on $\sa$.
By \eqref{lem3.5.1}, $\t{f_0}(\Phi_\mu(x_0)) = \varphi_\mu(x_0)$ for $\mu = \la_0, 1, i$.
If we apply these equalities to \eqref{lem3.3.1}, then we obtain
$\sqrt{2}\,\la_0^{\ez(x_0)}\varphi_{\la_0}(x_0) = \varphio(x_0) + i\ez(x_0)\varphii(x_0)$.
As $\varphila(x_0) \in \T$ for all $\la \in \T$, we have
$$
\sqrt{2} = |\varphio(x_0)+i\ez(x_0) \varphii(x_0)|
= |1 + i\ez(x_0)\varphii(x_0)\ov{\varphio(x_0)}|.
$$
Then we get $i\ez(x_0)\varphii(x_0)\ov{\varphio(x_0)} = i$
or $i\ez(x_0)\varphii(x_0)\ov{\varphio(x_0)} = -i$.
Thus, for each $x \in \Kvs$, we obtain $\varphii(x) = \ez(x)\varphio(x)$ or
$\varphii(x) = -\ez(x) \varphio(x)$. 
By the continuity of $\varphio$, $\varphii$ and $\ez$, there exists a continuous function
$\eo \colon \Kvs \to \set{\pm 1}$ such that $\varphii(x) = \eo(x)\varphio(x)$ for all $x \in \Kvs$.
\end{proof}

In the rest of this paper, we denote $a + ibs$ by $[a+ib]^{s}$ for $a, b \in \R$ and
$s \in \set{\pm 1}$.
Thus, for each $\la \in \C$, $[\la]^{s} = \la$ if $s = 1$ and $[\la]^{s} = \ov{\la}$ if $s = -1$.
Therefore, $[\la\mu]^{s} = [\la]^{s} [\mu]^{s}$ for all $\la, \mu \in \C$. If, in addition, $\la \in \T$,
then $[\la]^{s} = \la^{s}$ for $s \in \set{\pm 1}$.

From the above arguments, we have a partial form of the isometry $S$ on $\BT$.
This is an important part of the isometry $T_0=U^{-1}SU$ on $\Ao$
(see \eqref{S}).


\begin{lem}\label{lem3.11}
Let $\Kvs\in\set{\KS,\KC,\Ks}$.
For each $f \in \Ao$ and $x \in \Kvs$,
\begin{equation}\label{lem3.11.1}
S(\tf)(x) = [\al_1(x)\wh{f}(\phio(x))]^{\ez(x)} 
+ [\al_1(x) \wh{f'}(\psio(x))\varphio(x)]^{\ez(x)\eo(x)}.
\end{equation}
\end{lem}


\begin{proof}
Let $f \in \Ao$ and $x \in \Kvs$.
By \eqref{S_*},
$\Re \set{S_*(\chi)(\tf)} = \Re \set{\chi(S(\tf))}$ for every $\chi \in (\BT)^*$.
Taking $\chi = \d_x$ and $\chi = i\d_x$ into the last equality, we get
$$
\Re \set{S_*(\d_x)(\tf)} = \Re \set{S(\tf)(x)}
\q\mbox{and}\q \Re \set{S_*(i\d_x)(\tf)}
= -\Im \set{S(\tf)(x)},
$$
respectively, where $\Im z$ is the imaginary part of a complex number $z$. Therefore,
\begin{equation}\label{lem3.11.2}
S(\tf)(x) = \Re \set{S_*(\d_x)(\tf)} - i \, \Re \set{S_*(i\d_x)(\tf)}.
\end{equation}
Recall that $S_*(\d_x) = \alo(x)\dphi{1,x}$, and $S_*(i\d_x) = i\ez(x) \alo(x)\dphi{i,x}$
by Lemma~\ref{lem3.2}.
Substitute these two equalities into \eqref{lem3.11.2} to obtain 
$$
S(\tf)(x) = \Re \set{\al_1(x)\tf(\Phi_1(x))}
+i \, \Im \set{\ez(x) \al_1(x)\tf(\Phi_i(x))}.
$$
Lemmas~\ref{lem3.7}, \ref{lem3.8}, \ref{lem3.9} and \ref{lem3.10} imply that
\[
\Phi_1(x) = (\phio(x), \psio(x), \varphio(x))
\q\mbox{and}\q
\Phi_i(x) = (\phio(x), \psi_1(x), \eo(x) \varphio(x)).
\]
Applying these two equalities to the above formula of $S(\tf)(x)$,
we assure from \eqref{tf} that 
\begin{align*}
S(\tf)(x)
&=
\Re\left\{\alo(x)\left(\wh{f}(\phio(x)) + \wh{f'}(\psio(x)) \varphio(x)\right)\right\} \\
&\qq
+ i \, \Im\left\{\ez(x)\alo(x)\left(\wh{f}(\phio(x))
+ \wh{f'}(\psio(x)) \eo(x) \varphio(x)\right)\right\} \\
&=
[\al_1(x) \wh{f}(\phio(x))]^{\ez(x)} + [\al_1(x) \wh{f'}(\psio(x)) \varphi_1(x)]^{\ez(x)\eo(x)}.
\end{align*}
This completes the proof.
\end{proof}


\section{Proof of the main results}\label{sect4}

We recall that $S(\tf) = \t{T_0(f)}$ for $f \in \Ao$ by \eqref{S}.
Applying \eqref{tf}, we can rewrite
equality \eqref{lem3.11.1} as
\begin{equation}\label{lem4.1.0}
\wh{T_0(f)}(z) + \wh{T_0(f)'}(\eta)w
= [\al_1(x)\wh{f}(\phio(x))]^{\ez(x)} 
+ [\al_1(x) \wh{f'}(\psio(x))\varphio(x)]^{\ez(x)\eo(x)}
\end{equation}
for all $f \in \Ao$ and $x = (z,\eta,w) \in \Kvs = \Dvs\times\T$.

In this section, we shall derive from \eqref{lem4.1.0}
that $\wh{T_0(f)}(z)= [\al_1(x)\wh{f}(\phio(x))]^{\ez(x)}$
and $\wh{T_0(f)'}(\eta)w=[\al_1(x) \wh{f'}(\psio(x))\varphio(x)]^{\ez(x)\eo(x)}$;
the main idea of its proof is to show that $\alo(x)$, $\phio(x)$ and $\ez(x)$ are
all independent of $w$ for $x=(z,\eta,w)\in\Kvs$.
Once we obtain it, then the first term of the right-hand side of \eqref{lem4.1.0} is
constant with respect to $w$, and therefore, we get 
$\wh{T_0(f)}(z)= [\al_1(x)\wh{f}(\phio(x))]^{\ez(x)}$.
For this end, we first show that $\phio(z,\eta,w)$ does not depend on $w\in\T$.

In Lemmas~\ref{lem4.1} and \ref{lem4.2}, we assume that $\Dvs$ is one of
$\DS=\T\times\sa$, $\DC=\set{(\eta,\eta):\eta\in\sa}$
and $\Ds=\set{a}\times\sa$ with $a\in\Di$.


\begin{lem}\label{lem4.1}
For each $(z,\eta)\in\Dvs$, the value $\phio(z,\eta,w)$ is independent of $w \in \T$.
\end{lem}


\begin{proof}
Take any $(z,\eta)\in\Dvs$ and $w_1, w_2\in\T$ with $w_1\neq w_2$.
We shall show that
$\phio(z,\eta,w)\in\set{\phio(z,\eta,w_1),\phio(z,\eta,w_2)}$
for all $w\in\T$.
If $\Kvs=\Ks$, then $\phio$ is a constant function with the value $a$
on $\Ks$. We need to consider the case when $\Kvs\in\set{\KS,\KC}$.
Suppose that there exists $w_0\in\T$ such that
$\phio(z,\eta,w_0)\not\in\set{\phio(z,\eta,w_1),\phio(z,\eta,w_2)}$.
We set $x_j=(z,\eta,w_j)$, $z_j=\phio(x_j)$ and $\eta_j=\psio(x_j)$
for $j=0,1,2$. Then $z_0\not\in\set{z_1,z_2}$ by hypothesis.
If $\Kvs=\KS$, then $z_j\in\T$ for $j=0,1,2$.
There exists $f_0\in\Ao$ such that
$$
\wh{f_0}(z_0)=1,\q\wh{f_0}(z_1)=0=\wh{f_0}(z_2)
\q\mbox{and}\q\wh{f_0'}(\eta_j)=0\qq(j=0,1,2),
$$
by Proposition~\ref{prop3.4}.
Substituting these equalities into \eqref{lem4.1.0}, we obtain
\begin{align*}
\wh{T_0(f_0)}(z) + \wh{T_0(f_0)'}(\eta)w_0
&=
[\alo(x_0)]^{\ez(x_0)},\\
\wh{T_0(f_0)}(z) + \wh{T_0(f_0)'}(\eta)w_1
&=
0 = \wh{T_0(f_0)}(z) + \wh{T_0(f_0)'}(\eta)w_2.
\end{align*}
Since $w_1\neq w_2$, the above equalities yield
$\wh{T_0(f_0)'}(\eta) = 0 = \wh{T_0(f_0)}(z)$.
Therefore, $[\alo(x_0)]^{\ez(x_0)} = 0$,
which is impossible, since $\alo(x_0) \in \T$.
Hence, $\phio(z,\eta,w)\in\set{\phio(z,\eta,w_1),\phio(z,\eta,w_2)}$
for all $w\in\T$.

If $\Kvs=\KC$, then $z_j=\eta_j\in\sa$ for $j=0,1,2$.
By Proposition~\ref{prop3.5}, there exist $f_1\in\Ao$ and $j_0\in\set{0,1,2}$
such that
\[
\wh{f_1'}(\eta_{j_0}) = 1, \q \wh{f_1'}(\eta_k)=0 \qq(k\in\set{0,1,2}\setminus\set{j_0})
\q\mbox{and}\q \wh{f_1}(\eta_j) = 0 \qq (j\in\set{0,1,2}).
\]
If we apply these equalities to \eqref{lem4.1.0}, we get
\begin{align*}
\wh{T_0(f_1)}(z) + \wh{T_0(f_1)'}(\eta)w_k
&=
0
&(k\in\set{0,1,2}\setminus\set{j_0}),\\
\wh{T_0(f_1)}(z) + \wh{T_0(f_1)'}(\eta)w_{j_0}
&=
[\alo(x_{j_0})\varphio(x_{j_0})]^{\ez(x_{j_0})\eo(x_{j_0})}.
\end{align*}
We notice that $w_0,w_1$ and $w_2$ are mutually distinct
by the choice of them. Then the above equalities show that
$\wh{T_0(f_1)'}(\eta) = 0 = \wh{T_0(f_1)}(z)$.
Therefore, $[\alo(x_{j_0})\varphio(x_{j_0})]^{\ez(x_{j_0})\eo(x_{j_0})}=0$,
which contradicts $\alo(x_{j_0}),\varphio(x_{j_0})\in \T$.
Hence, $\phio(z,\eta,w)\in\set{\phio(z,\eta,w_1),\phio(z,\eta,w_2)}$
for all $w\in\T$.

We now prove that the value $\phio(z, \eta, w)$ is independent of $w \in \T$.
Since $\phi$ is continuous, so is the function
$\phio(z,\eta,\cdot)\colon\T\to q_1(\Kvs)$,
which maps $w\in\T$ to $\phio(z,\eta,w)$.
Thus, the set $\phio(z, \eta, \T)$ is connected.
By the previous paragraph, we see that
$\phio(z,\eta,\T)\subset\set{\phio(z,\eta,w_1),\phio(z,\eta,w_2)}$.
We thus obtain $\phio(z,\eta,w_1)=\phio(z,\eta,w_2)$.
This implies that the value $\phio(z,\eta,w)$ is independent of $w \in \T$.
\end{proof}

We next prove that $\psio(z,\eta,w)$ does not depend on $w\in\T$;
proof of the result is quite similar to that of Lemma~\ref{lem4.1}.


\begin{lem}\label{lem4.2}
For each $(z,\eta)\in\Dvs$, the value $\psio(z,\eta,w)$ is independent of $w \in \T$.
\end{lem}


\begin{proof}
Fix arbitrary $(z,\eta)\in\Dvs$ and $w_1,w_2\in\T$ satisfying $w_1\neq w_2$.
We shall prove that $\psio(z,\eta,w)\in\set{\psio(z,\eta,w_1),\psio(z,\eta,w_2)}$
for all $w\in\T$.
Suppose, on the contrary, that there exists $w_0\in\T$ such that
$\psio(z,\eta,w_0)\not\in\set{\psio(z,\eta,w_1),\psio(z,\eta,w_2)}$.
We set $x_j = (z, \eta,w_j)$ and $\eta_j=\psio(x_j)$ for $j=0,1,2$.
Then $\eta_0\not\in\set{\eta_1,\eta_2}$ by the hypothesis.
Choose $v_0\in\H$ so that $\wh{v_0}(\eta_0)=1$ and
$\wh{v_0}(\eta_1)=0=\wh{v_0}(\eta_2)$.
Recall that $\phio(x_j)$ is independent of $j$ by Lemma~\ref{lem4.1}.
If we set $f_0=I_0(v_0)-\wh{I_0(v_0)}(\phio(x_0))\unit\in\Ao$, then we observe
$$
\wh{f_0}(\phio(x_0)) = 0, \q
\wh{f_0'}(\eta_0)=1
\qbox{and}\q 
\wh{f_0'}(\eta_1)=0=\wh{f_0'}(\eta_2).
$$
Equality \eqref{lem4.1.0} with the above implies that
\begin{align*}
\wh{T_0(f_0)}(z) + \wh{T_0(f_0)'}(\eta)w_0
&=
[\alo(x_0)\varphio(x_0)]^{\ez(x_0)\eo(x_0)}, \\
\wh{T_0(f_0)}(z) + \wh{T_0(f_0)'}(\eta)w_1
&=
0 = \wh{T_0(f_0)}(z) + \wh{T_0(f_0)'}(\eta)w_2.
\end{align*}
Because $w_1\neq w_2$, we assure from the above equalities that
$\wh{T_0(f_0)'}(\eta) = 0 = \wh{T_0(f_0)}(z)$.
Hence, $[\alo(x_0)\varphio(x_0)]^{\ez(x_0)\eo(x_0)} = 0$,
which contradicts $\alo(x_0)\varphio(x_0) \in \T$.
We thus conclude that $\psio(z,\eta,w)\in\set{\psio(z,\eta,w_1),\psio(z,\eta,w_2)}$
for all $w\in\T$.

We show that the value $\psio(z, \eta, w)$ is independent of $w \in \T$.
The set $\psio(z, \eta, \T) = \set{\psio(z, \eta, w) : w \in \T}$
is connected, since $\psio$ is continuous. 
By the last paragraph, we see that $\psio(z, \eta, w_1) = \psio(z, \eta, w_2)$.
Hence, the value $\psio(x, \eta, w)$ does not depend on $w \in \T$, as is claimed.
\end{proof}

Now we shall prove that both $\ez(z,\eta,w)$ and $\eo(z,\eta,w)$ are constant
functions with respect to $w\in\T$.


\begin{lem}\label{lem4.3}
\begin{enumerate}
\item
If $\Kvs\in\set{\KS,\Ks}$, then for $x = (z, \eta, w) \in \Kvs$,
the values $\ez(x)$ and $\eo(x)$ are independent
of $z\in q_1(\Kvs)$ and $w\in\T$.

\item
If $\Kvs=\KC$, then for $x = (\eta,\eta, w) \in \Kvs$,
the values $\ez(x)$ and $\eo(x)$ are independent of $w\in\T$.
\end{enumerate}
\end{lem}


\begin{proof}
Let $k = 0, 1$ and $\eta \in \sa$.

(1)
Suppose that $\Kvs\in\set{\KS,\Ks}$.
The function $s_k(\cdot, \eta, \cdot)$,
which sends $(z,w)\in q_1(\Kvs)\times\T$ to $s_k(z, \eta, w)$,
is continuous on $q_1(\Kvs) \times \T$.
Since $q_1(\Kvs)=\T$ if $\Kvs=\KS$, and $q_1(\Kvs)=\set{a}$ if $\Kvs=\Ks$,
the product space $q_1(\Kvs)\times\T$ is connected.
Then the image $s_k(q_1(\Kvs), \eta, \T)$ of $q_1(\Kvs) \times \T$
under the continuous mapping $s_k(\cdot, \eta, \cdot)$ is a connected subset
of $\set{\pm 1}$.
Then we deduce that the value $s_k(z, \eta, w)$ does not depend on
$z \in q_1(\Kvs)$ and $w \in \T$.

(2)
The function $s_k(\eta,\eta, \cdot)$, which sends
$w\in\T$ to $s_k(\eta,\eta,w)$, is continuous on $\T$.
The image $s_k(\eta,\eta,\T)$ of $\T$ under the mapping
$s_k(\eta,\eta,\cdot)$ is a connected subset of $\set{\pm 1}$.
Hence, the value $s_k(\eta,\eta,w)$ does not depend on $w \in \T$.
\end{proof}

By Lemmas~\ref{lem4.1}, \ref{lem4.2} and \ref{lem4.3}, we may write
$\phio(z, \eta, w) = \phio(z,\eta)$, $\psio(z, \eta, w) = \psio(z,\eta)$
and, for $k=0,1$, $s_k(z, \eta, w) = s_k(\eta)$ for $(z, \eta, w) \in \Kvs$. 
Then we can rewrite equality \eqref{lem4.1.0} as follows:
\begin{equation}\label{lem4.3.1}
\wh{T_0(f)}(z) + \wh{T_0(f)'}(\eta)w
= [\alo(x)\wh{f}(\phio(z,\eta))]^{\ez(\eta)}
+ [\alo(x) \wh{f'}(\psio(z,\eta))\varphio(x)]^{\ez(\eta)\eo(\eta)}
\end{equation}
for $f \in \Ao$ and $x = (z, \eta, w) \in \Kvs$.

We need to show that $\alo(z,\eta,w)$ is independent of $w\in\T$.
We use the following elementary result to prove it.


\begin{prop}\label{prop4.4}
Let $\la, \mu \in \C$. If $|\la + \mu w| = 1$ for all $w \in \T$, then $\la\mu=0$
and $|\la| + |\mu| = 1$.
\end{prop}


\begin{proof}
Suppose, on the contrary, that $\la\mu \neq 0$.
Choose $w_1 \in \T$ so that $\mu \, w_1 = \la|\mu||\la|^{-1}$, and set $w_2 = -w_1$. 
By hypothesis, $|\la + \mu \, w_1| = 1 = |\la + \mu \, w_2|$, that is, 
$$
\left| \la + \fr{\la|\mu|}{|\la|} \right| = 1 = \left| \la - \fr{\la|\mu|}{|\la|} \right|. 
$$
These equalities show that $|\la| + |\mu| = \bigl| |\la| - |\mu| \bigr|$. 
This implies that $\la = 0$ or $\mu = 0$, 
which contradicts the hypothesis that $\la\mu \neq 0$. Thus we obtain $\la\mu = 0$,
and then $|\la| + |\mu| = 1$.
\end{proof}


\subsection{Proof of Theorem~\ref{thm1}}

In this subsection, we give proof of Theorem~\ref{thm1}; to this end,
we consider the case when $\Kvs=\KS=\DS\times\T$
and $\Kvs=\KC=\DC\times\T$, where $\DS=\T\times\sa$ and
$\DC=\set{(\eta,\eta):\eta\in\sa}$.


\begin{lem}\label{lem4.5}
\begin{enumerate}
\item
The function $\ez$ as in Lemma~\ref{lem3.2} is constant
on $\sa$; we will write $\ez$ instead of $\ez(\eta)$.

\item
There exists a constant $c \in \T$ such that
\begin{enumerate}
\item
$\wh{T_0(\unit)}(z) = c^{\,\ez}$ for all $z \in \Db$,

\item
$\wh{T_0(i\unit)}(z) = i\ez \wh{T_0(\unit)}(z)$ for all $z \in \Db$,

\item
$\alo(x) = c$ for all $x \in\Kvs$.
\end{enumerate}
\end{enumerate}
\end{lem}


\begin{proof}
Let $\la \in \set{1,i}$. If we substitute $f = \la \unit \in \Ao$ into \eqref{lem4.3.1},
then
\begin{equation}\label{lem4.5.1}
\wh{T_0(\la\unit)}(z) + \wh{T_0(\la\unit)'}(\eta)w
= [\la\alo(x)]^{\ez(\eta)}
\end{equation}
for all $x = (z, \eta, w) \in\Kvs$.
We shall show that $\wh{T_0(\la\unit)'}=0$ on $\M$
for $\Kvs=\KS$ and $\Kvs=\KC$.

(i)
Let $\Kvs=\KS$.
Suppose, on the contrary, that there exists $\eta_0 \in \sa$ such that
$\wh{T_0(\la\unit)'}(\eta_0) \neq 0$. 
Let $z\in\T$.
By \eqref{lem4.5.1},
$|\wh{T_0(\la\unit)}(z) + \wh{T_0(\la\unit)'}(\eta_0)w| = 1$ for all $w \in \T$. 
Because $\wh{T_0(\la\unit)'}(\eta_0) \neq 0$, Proposition~\ref{prop4.4} shows that
$\wh{T_0(\la\unit)}(z) = 0$.
Since $z\in\T$ is arbitrary, we deduce that $\wh{T_0(\la\unit)} = 0$ on $\T$.
Since $T_0(\la\unit)$ is an analytic function on $\Di$,
it follows that $T_0(\la\unit)$ is identically zero on $\Di$.
This implies that $\wh{T_0(\la\unit)'}=0$ on $\M$,
which contradicts $\wh{T_0(\la\unit)'}(\eta_0)\neq 0$.
Hence, $\wh{T_0(\la\unit)'}(\eta)=0$ for all $\eta\in\sa$.
Since $\sa$ is a boundary for $\A$, we get $\wh{T_0(\la\unit)'}=0$ on $\M$.

(ii) Now, we suppose that $\Kvs=\KC$.
Choose $\eta_0\in\sa$ arbitrarily.
Equality \eqref{lem4.5.1} shows that
$|\wh{T_0(\la\unit)}(\eta_0) + \wh{T_0(\la\unit)'}(\eta_0)w| = 1$
for all $w \in \T$. By Proposition~\ref{prop4.4},
$$
\wh{T_0(\la\unit)}(\eta_0) \wh{T_0(\la\unit)'}(\eta_0) = 0
\qbox{and}\q
|\wh{T_0(\la\unit)}(\eta_0)| + |\wh{T_0(\la\unit)'}(\eta_0)| = 1.
$$
Because $\eta_0\in\sa$ is arbitrarily chosen, we deduce that
$\wh{T_0(\la\unit)}\,\wh{T_0(\la\unit)'} = 0$
and $|\wh{T_0(\la\unit)}| + |\wh{T_0(\la\unit)'}| = 1$
on $\sa$.
Since $\sa$ is a boundary for $\A$,
we see that $\wh{T_0(\la\unit)}\,\wh{T_0(\la\unit)'} = 0$ on $\M$.
Hence $T_0(\la\unit)T_0(\la\unit)' = 0$ on $\Di$.
Both $T_0(\la\unit)$ and $T_0(\la\unit)'$ are analytic on the connected set $\Di$,
and thus $T_0(\la\unit) = 0$ on $\Di$, or $T_0(\la\unit)' = 0$ on $\Di$.
If $T_0(\la\unit) = 0$ on $\Di$, then we would have $T_0(\la\unit)' = 0$ on $\Di$,
and hence $\wh{T_0(\la\unit)} = \wh{T_0(\la\unit)'} = 0$ on $\M$.
This contradicts $|\wh{T_0(\la\unit)}| + |\wh{T_0(\la\unit)'}| = 1$ on $\sa$,
and consequently we obtain $T_0(\la\unit)' = 0$ on $\Di$.
This implies that $\wh{T_0(\la\unit)'}$ is identically zero on $\M$,
as is claimed.

Since $\Di$ is connected, we see that $T_0(\la\unit)$ is constant on $\Di$.
We set $c_\la = T_0(\la\unit)$, and then $c_\la = [\la\alo(x)]^{\ez(\eta)}$
for all $x = (z,\eta, w) \in \Kvs$ by \eqref{lem4.5.1}. Then we obtain $|c_\la| = 1$.
It follows that $c_i = [i\alo(x)]^{\ez(\eta)} = [i]^{\ez(\eta)} [\alo(x)]^{\ez(\eta)} = i\ez(\eta)c_1$,
and hence $c_i = i\ez(\eta)c_1$ for all $\eta \in \sa$.
This shows that $\ez$ is a constant function on $\sa$.
We set $c = c_1^{\,\ez}$, and then $\wh{T_0(\unit)} = c_1 = c^{\,\ez}$ and
$\wh{T_0(i\unit)} = c_i = i\ez c^{\,\ez}$ on $\sa$.
Furthermore, $\alo = c_1^{\,\ez} = c$ on $\Kvs$.
Since $\sa$ is a boundary for $\A$, we have $\wh{T_0(\unit)}(z)=c^{\,\ez}$ and $\wh{T_0(i\unit)}(z)=i\ez\wh{T_0(\unit)}(z)$ for $z\in\Db$.
\end{proof}

By Lemma~\ref{lem4.5}, equality \eqref{lem4.3.1} is reduced to
\begin{equation}\label{lem4.6.0}
\wh{T_0(f)}(z) + \wh{T_0(f)'}(\eta)w \\
= [c\wh{f}(\phio(z,\eta))]^{\ez} + [c\wh{f'}(\psio(z,\eta))\varphio(x)]^{\ez\eo(\eta)}
\end{equation}
for every $f \in \Ao$ and $x = (z, \eta, w)\in\Kvs$.
Then we show that $\phio(z,\eta)$ is constant for $\eta\in\sa$.


\begin{lem}\label{lem4.6}
Let $c \in \T$ be the constant as in Lemma~\ref{lem4.5}. Then 
$\wh{T_0(\id)}(z) = [c\phio(z,\eta)]^{\ez}$
for all $(z,\eta)\in\Dvs$
with $\Dvs=\DS$ or $\Dvs=\DC$.
\end{lem}


\begin{proof}
Let $(z_0,\eta_0)\in\Dvs$. We set $\xi_0 = \phio(z_0,\eta_0)$ and 
$g = \id - \xi_0 \unit \in \Ao$.
Then $\wh{g}(\xi_0) = 0$ and $g' = \unit$, and thus, by \eqref{lem4.6.0},
\begin{equation}\label{lem4.6.1}
\wh{T_0(g)}(z_0) + \wh{T_0(g)'}(\eta_0)w
= [c\varphio(z_0, \eta_0, w)]^{\ez\eo(\eta_0)}
\end{equation}
for all $w \in \T$.
Hence $|\wh{T_0(g)}(z_0) + \wh{T_0(g)'}(\eta_0)w| = 1$ for all $w \in \T$. 
By Proposition~\ref{prop4.4}, we obtain
$\wh{T_0(g)}(z_0)\wh{T_0(g)'}(\eta_0) = 0$.
We shall prove that $\wh{T_0(g)}(z_0) = 0$.
Suppose, on the contrary, that $\wh{T_0(g)}(z_0) \neq 0$, and then
$\wh{T_0(g)'}(\eta_0) = 0$. By \eqref{lem4.6.1}, we have
$$
\wh{T_0(g)}(z_0) = [c\varphio(z_0, \eta_0, w)]^{\ez\eo(\eta_0)}
$$
for all $w \in \T$.
By the surjectivity of $T_0$, there exists $h \in \Ao$ such that 
$\wh{T_0(h)}(z_0) = 0$ and $\wh{T_0(h)'}(\eta_0) = 1$. 
Applying these two equalities to \eqref{lem4.6.0}, we get
\begin{align*}
w
&=
\wh{T_0(h)}(z_0) + \wh{T_0(h)'}(\eta_0)w
=
[c\wh{h}(\phio(z_0,\eta_0))]^{\ez}
+ [c\varphio(z_0, \eta_0, w)]^{\ez\eo(\eta_0)} [\wh{h'}(\psio(z_0,\eta_0))]^{\ez\eo(\eta_0)} \\
&=
[c\wh{h}(\phio(z_0,\eta_0))]^{\ez}
+ \wh{T_0(g)}(z_0) [\wh{h'}(\psio(z_0,\eta_0))]^{\ez\eo(\eta_0)}
\end{align*}
for all $w \in \T$. The rightmost hand side of the above equalities is independent of $w$,
and thus we arrive at a contradiction.
Consequently, $\wh{T_0(g)}(z_0) = 0$ as is claimed.

Since $T_0$ is real linear, 
$0 = \wh{T_0(g)}(z_0) = \wh{T_0(\id)}(z_0) - \wh{T_0(\xi_0 \unit)}(z_0)$, 
and hence $\wh{T_0(\id)}(z_0) = \wh{T_0(\xi_0 \unit)}(z_0)$.
Here, we note that $\wh{T_0(\unit)} = c^{\,\ez}=[c]^{\ez}$
and $\wh{T_0(i\unit)} = i\ez \, \wh{T_0(\unit)}$ by Lemma~\ref{lem4.5}.
These imply that
\begin{align*}
\wh{T_0(\id)}(z_0) 
&=
\wh{T_0(\xi_0\unit)}(z_0)
=
(\Re \xi_0) \, \wh{T_0(\unit)}(z_0) + (\Im \xi_0) \, \wh{T_0(i\unit)}(z_0) \\
&=
(\Re \xi_0 + i\ez \, \Im \xi_0) \wh{T_0(\unit)}(z_0)
=[\xi_0]^{\ez} [c]^{\ez} = [c\phio(z_0,\eta_0)]^{\ez},
\end{align*}
where we have used the real linearity of $T_0$.
Since $(z_0,\eta_0)\in\Dvs$ is arbitrarily chosen, we get
$\wh{T_0(\id)}(z) = [c\phio(z,\eta)]^{\ez}$ for all $(z,\eta)\in\Dvs$.
\end{proof}

Finally, we investigate the function $\varphio$ on $\Kvs$.
In fact, we prove that $\varphio(z,\eta,w)$ is constant for $z$.
More explicitly, we have the following result on $\varphio$.


\begin{lem}\label{lem4.7}
For each $(z, \eta, w) \in\Kvs$,
$$
\wh{T_0(\id)'}(\eta) = [c\varphio(z, \eta, 1)]^{\ez\eo(\eta)}
\qbox{and}\q
\varphio(z,\eta, w) = w^{\ez\eo(\eta)}\varphio(z, \eta, 1).
$$
\end{lem}


\begin{proof}
Let $(z,\eta,w) \in\Kvs$.
Then $\wh{T_0(\id)}(z) = [c\phio(z,\eta)]^{\ez}$ by Lemma~\ref{lem4.6}.
Substituting this equality and $f = \id$ into \eqref{lem4.6.0}, we obtain
$\wh{T_0(\id)'}(\eta)w = [c\varphio(z,\eta,w)]^{\ez\eo(\eta)}$.
If we take $w = 1$ in the last equality, then
$\wh{T_0(\id)'}(\eta) = [c\varphio(z, \eta, 1)]^{\ez\eo(\eta)}$.
Note that $\wh{T_0(\id)'}(\eta)\neq 0$, since $c, \varphio(z,\eta,1) \in \T$
(see Definition~\ref{defn2} and Lemma~\ref{lem4.5}).
We have
$$
w = \fr{\wh{T_0(\id)'}(\eta)w}{\wh{T_0(\id)'}(\eta)}
= \fr{[c\varphio(z, \eta, w)]^{\ez\eo(\eta)}}{[c\varphio(z, \eta, 1)]^{\ez\eo(\eta)}}
= \fr{[\varphio(z, \eta, w)]^{\ez\eo(\eta)}}{[\varphio(z, \eta, 1)]^{\ez\eo(\eta)}},
$$
and hence $[\varphio(z, \eta, w)]^{\ez\eo(\eta)} = w[\varphio(z, \eta, 1)]^{\ez\eo(\eta)}$.
We conclude $\varphio(z, \eta, w) = w^{\ez\eo(\eta)}\varphio(z, \eta, 1)$
for all $(z,\eta,w)\in\Kvs$.
\end{proof}

Now we are ready to prove Theorem~\ref{thm1} for the characterizations
of the isometry $T_0$ on $\Ao$ with the norms $\VS{\cdot}$ and $\VC{\cdot}$.
Then $\Kvs=\KS=\DS\times\T$ and $\Kvs=\KC=\DC\times\T$
with $\DS=\T\times\sa$ and $\DC=\set{(\eta,\eta):\eta\in\sa}$.


\begin{proof}[\bf\textit{Proof of Theorem~\ref{thm1}}]
Fix arbitrary $f \in \Ao$ and $(z_0,\eta_0)\in\Dvs$.
We have $\varphio(z_0, \eta_0, w) = w^{\ez\eo(\eta_0)}\varphio(z_0, \eta_0, 1)$
for all $w \in \T$ by Lemma~\ref{lem4.7}.
Applying Lemmas~\ref{lem4.6} and \ref{lem4.7}, we may write
$\phio(z,\eta) = \phio(z)$ and 
$\varphio(z, \eta, w) =w^{\ez\eo(\eta)}\varphio(\eta)$ for all $(z, \eta, w) \in\Kvs$.
Now equality \eqref{lem4.6.0} is reduced to
\[
\wh{T_0(f)}(z_0) + \wh{T_0(f)'}(\eta_0)w
= [c\wh{f}(\phio(z_0))]^{\ez} + w[c\varphio(\eta_0)]^{\ez\eo(\eta_0)}
[\wh{f'}(\psio(z_0,\eta_0))]^{\ez\eo(\eta_0)}
\]
for all $w \in \T$. The above equality holds for every $w \in \T$, and then
$\wh{T_0(f)}(z_0)=[c\wh{f}(\phio(z_0))]^{\ez}$ and
$\wh{T_0(f)'}(\eta_0)
=[c\varphio(\eta_0)]^{\ez\eo(\eta_0)} [\wh{f'}(\psio(z_0,\eta_0))]^{\ez\eo(\eta_0)}$.
Since $(z_0,\eta_0)\in\Dvs$ is arbitrary, we get
\begin{align}
\wh{T_0(f)}(z)
&=
[c\wh{f}(\phio(z))]^{\ez}, \label{thm1.2}\\
\wh{T_0(f)'}(\eta)
&=
[c\varphio(\eta)]^{\ez\eo(\eta)} [\wh{f'}(\psio(z,\eta))]^{\ez\eo(\eta)}
\label{thm1.3}
\end{align}
for all $(z,\eta)\in\Dvs$.
Set $\rho = c^{-\ez}\,T_0(\id) \in A(\Db)$, and then 
$c^{\,\ez}\wh{\rho}(z) = \wh{T_0(\id)}(z) = [c\phio(z)]^{\ez}$ for all
$z\in q_1(\Kvs)$ by \eqref{thm1.2}.
This shows that
\begin{equation}\label{thm1.4}
\phio(z) = [\wh{\rho}(z)]^{\ez}
\end{equation}
for all $z\in q_1(\Kvs)$.
Consequently, by \eqref{thm1.2} and \eqref{thm1.4},
\begin{equation}\label{thm1.5}
\wh{T_0(f)}(z) = [c\wh{f}([\wh{\rho}(z)]^{\ez})]^{\ez}
\end{equation}
for all $z\in q_1(\Kvs)$.

If $\Kvs=\KS$, then $q_1(\Kvs)=\T$.
Since $\rho$ is an analytic function on $\Di$, the maximum modulus principle
shows that equality \eqref{thm1.5} holds for all $z \in \Db$.
Note that $\rho \in A(\Db)$ satisfies $|\wh{\rho}(z)| = |\phio(z)| = 1$
for all $z\in q_1(\Kvs)=\T$
by \eqref{thm1.4},
and thus $\wh{\rho}(\Di)\subset\Db$ by the maximum modulus principle.

If $\Kvs=\KC$, then $q_1(\Kvs)=\sa$. Since $\sa$ is a boundary for $\A$,
we see that \eqref{thm1.5} holds for all $\eta\in\M$. In particular,
\eqref{thm1.5} is valid for all $z\in\Di$.
By the continuity of $\wh{T_0(f)}$, $\wh{f}$ and $\wh{\rho}$ on $\Db$,
we observe that equality \eqref{thm1.5} is true for all $z\in\Db$.
Note that $\rho \in A(\Db)$ satisfies $|\wh{\rho}(\eta)| = |\phio(\eta)| = 1$
for all $\eta \in \sa$, and thus $\wh{\rho}(\M) \subset \Db$.
Thus, we have $\wh{\rho}(\Di)\subset\Db$.

We now prove that $\wh{\rho}$ is injective on $\Di$.
Suppose that $\wh{\rho}(z_1) = \wh{\rho}(z_2)$ for $z_1, z_2 \in \Di$.
Since $T_0$ is surjective, there exists $f_0 \in \Ao$ such that
$T_0(f_0) = \id \in \Ao$.
Since \eqref{thm1.5} holds for $z\in\Di$, we obtain
\[
z_1 = \wh{T_0(f_0)}(z_1) = [c\wh{f_0}([\wh{\rho}(z_1)]^{\ez})]^{\ez}
= [c\wh{f_0}([\wh{\rho}(z_2)]^{\ez})]^{\ez} = \wh{T_0(f_0)}(z_2) = z_2,
\]
and thus $\wh{\rho}$ is injective on $\Di$, as is claimed.
Then $\wh{\rho}$ is a non-constant function on $\Di$.
By the maximum modulus principle, we see that $\wh{\rho}(\Di)\subset\Di$.

Since $T_0^{-1}$ is a surjective real linear isometry on $\Ao$ as well,
the above arguments can be applied to $T_0^{-1}$.
Then there exist $d \in \T$, $\theta \in A(\Db)$ with $\wh{\theta}(\Di) \subset \Di$
and $\et \in \set{\pm 1}$ such that
\begin{equation}\label{thm1.6}
\wh{T_0^{-1}(f)}(z) = [d\wh{f}([\wh{\theta}(z)]^{\et})]^{\et}
\end{equation}
for all $f \in \Ao$
and $z \in \Di$. Let $z \in \Di$. If we substitute $f = T_0(\unit)$ into \eqref{thm1.6},
then we have $1 = [d\wh{T_0(\unit)}([\wh{\theta}(z)]^{\et})]^{\et}$.
As $\wh{T_0(\unit)} = c^{\,\ez}$ by \eqref{thm1.5}, we obtain $1 = [dc^{\,\ez}]^{\et}$.
Substituting $f = T_0(\id)$ into \eqref{thm1.6} to get
$z = [d\wh{T_0(\id)}([\wh{\theta}(z)]^{\et})]^{\et}$.
By \eqref{thm1.5}, $\wh{T_0(\id)}(z) = c^{\,\ez}\wh{\rho}(z)$, and hence
$$
z = [d\wh{T_0(\id)}([\wh{\theta}(z)]^{\et})]^{\et} = [dc^{\,\ez} \wh{\rho}([\wh{\theta}(z)]^{\et})]^{\et}
= [\wh{\rho}([\wh{\theta}(z)]^{\et})]^{\et}, 
$$
where we have used that $1 = [dc^{\,\ez}]^{\et}$.
Because $z\in\Di$ is arbitrarily chosen, we get
$z = [\wh{\rho}([\wh{\theta}(z)]^{\et})]^{\et}$ for all $z \in \Di$.
This shows that $\Di \subset \wh{\rho}(\Di)$, and therefore,
$\wh{\rho}(\Di) = \Di$.

From the above arguments, we see that
$\rho = c^{-\ez}\,T_0(\id) \in A(\Db)$ is an analytic function on $\Di$,
which is a homeomorphism on $\Di$
as well. It is well-known (cf. \cite[Theorem~12.6]{rud}) that for such a function $\rho$
there exist $\la \in \T$ and $b \in \Di$ such that
\begin{equation}\label{thm1.7}
\wh{\rho}(z) = \la \, \fr{z-b}{1-\ov{b}z}
\qq (z \in \Di).
\end{equation}

Because $\wh{T_0(\id)}(z) = c^{\,\ez}\wh{\rho}(z)$ for $z \in \Di$,
we have $T_0(\id)'(z) = c^{\,\ez}\rho'(z)$ for $z \in \Di$.
Differentiating both sides of \eqref{thm1.7},
we see that $\rho'(z) = \la(1-|b|^2)/(1-\ov{b}z)^2$ for $z \in \Di$.
Hence $c^{-\ez}T_0(\id)' = \rho'$ belongs to $A(\Db)$.
Therefore, $\wh{\rho'}(\xi)=\wh{\rho'}(\eta)$ for all
$\xi\in\T$ and $\eta\in\M_\xi$. Substituting $f = \id$ into \eqref{thm1.3},
we get $|\wh{T_0(\id)'}| = |c\varphio| = 1$ on $\sa$. Hence, $|\wh{\rho'}|=1$ on $\sa$.
We see that $\sa\cap\M_\xi\neq\emptyset$ for all $\xi\in\T$. In fact, let
$\xi_0\in\T$, and set $u(z)=(\ov{\xi_0}\,z+1)/2\in A(\Db)$ for $z\in\Di$.
Then $\wh{u}(\eta)=\wh{u}(\xi)$ for all $\xi\in\T$ and $\eta\in\M_\xi$.
Note that $\wh{u}(\xi_0)=1$ and
$|\wh{u}(\xi)|<1$ for all $\xi\in\T\setminus\set{\xi_0}$.
Since $\sa$ is a boundary for $\A$, 
we obtain $\sa\cap\M_{\xi_0}\neq\emptyset$.
This proves $\sa\cap\M_\xi\neq\emptyset$ for all $\xi\in\T$.
For each $z\in\T$, we can find $\eta\in\sa\cap\M_z$.
Then, $|\wh{\rho'}(z)|=|\wh{\rho'}(\eta)|=1$, since $\rho'\in A(\Db)$.
Consequently,
$|\wh{\rho'}(z)| = 1$ for all $z \in \T$, that is, $1-|b|^2 = |1-\ov{b}z|^2$
for all $z \in \T$.
This implies that $b = 0$: In fact, if $b \neq 0$, then we substitute $z = \pm b/|b|$
into the equality $1-|b|^2 = |1-\ov{b}z|^2$ to obtain
$$
(1-|b|)^2 = \left| 1 - \ov{b} \, \fr{b}{|b|} \right|^2 = 1-|b|^2
= \left| 1 - \ov{b} \, \fr{-b}{|b|} \right|^2 = (1+|b|)^2,
$$
which is impossible. Therefore, $b = 0$ and consequently $\wh{\rho}(z) = \la z$
for all $z \in \Di$.
By \eqref{thm1.5} we conclude that $T_0(f)(z) = cf(\la z)$ for all $f \in \Ao$
and $z \in \Di$,
or $T_0(f)(z) = \ov{cf(\ov{\la z})}$ for all $f \in \Ao$ and $z \in \Di$.
By \eqref{T_0}, $T_0 = T - T(0)$, and thus 
$T(f)(z)=T(0)(z)+cf(\la z)$ for all $f\in\Ao$ and $z\in\Di$, or
$T(f)(z)=T(0)(z)+\ov{cf(\ov{\la z})}$ for all $f\in\Ao$ and $z\in\Di$.

Conversely, if $T$ is one of the above forms, then it is easy to check that $T$ is a
surjective isometry on $\Ao$.
\end{proof}


\subsection{Proof of Theorem~\ref{thm2}}

In this subsection, we consider the case when $\Kvs=\Ks=\set{a}\times\sa\times\T$
for $a \in \Di$.
Recall that
\[
\wh{T_0(f)}(z) + \wh{T_0(f)'}(\eta)w
= [\alo(x)\wh{f}(\phio(z,\eta))]^{\ez(\eta)}
+ [\alo(x) \wh{f'}(\psio(z,\eta))\varphio(x)]^{\ez(\eta)\eo(\eta)}
\]
for all $x = (z, \eta, w) \in\Ks$ by \eqref{lem4.6.0}.
By definition, $\phio$ is a map from
$\Ks$ to $q_1(\Ks)=\set{a}$.
Hence $\phio$ is the constant function which takes the value only $a$.
In addition, since $q_1(\Ks)=\set{a}$, we may write 
\begin{equation}\label{lem4.9.-1}
\psio(z,\eta) = \psio(\eta)
\q\mbox{and}\q
\varphio(z,\eta,w) = \varphio(\eta,w)
\end{equation}
for $(z, \eta) \in \set{a} \times \sa$ and $w \in \T$.
Now we obtain the following equality
\begin{equation}\label{lem4.9.0}
T_0(f)(a) + \wh{T_0(f)'}(\eta)w
= [\alo(x)f(a)]^{\ez(\eta)} + [\alo(x) \wh{f'}(\psio(\eta))\varphio(\eta,w)]^{\ez(\eta)\eo(\eta)}
\end{equation}
for all $f \in \Ao$ and $x = (a, \eta, w) \in \set{a} \times \sa \times \T$.

We need to prove that
$\wh{T_0(f)'}(\eta)w=[\alo(x) \wh{f'}(\psio(\eta))\varphio(\eta,w)]^{\ez(\eta)\eo(\eta)}$.
We first show that $\alo(a,\eta,w)$ is independent of $w\in\T$.


\begin{lem}\label{lem4.9}
There exists $c_0 \in \T$ such that $\wh{T_0(\unit)} = c_0$ on $\Db$
and $\alo(a, \eta, w) = [c_0]^{\ez(\eta)} = \alo(a, \eta, 1)$
for all $(\eta, w) \in \sa \times \T$.
\end{lem}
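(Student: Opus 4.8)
The plan is to imitate the proof of Lemma~\ref{lem4.6}, feeding the constant function into \eqref{lem4.9.0}, but the single--point factor $\TT = \set{a}$ will force an extra argument that the $\T$--factor made unnecessary. First I would substitute $f = \unit$ into \eqref{lem4.9.0}. Since $\unit' = 0$, we have $\wh{\unit'} = 0$, so the second summand drops out and the identity becomes $T_0(\unit)(a) + \wh{T_0(\unit)'}(\eta)w = [\alo(a,\eta,w)]^{\ez(\eta)}$ for every $(\eta,w) \in \sa \times \T$. As $\alo$ is $\T$--valued, the right--hand side has modulus $1$, so $|T_0(\unit)(a) + \wh{T_0(\unit)'}(\eta)w| = 1$ for all $w \in \T$; Proposition~\ref{prop4.2} then yields, for each fixed $\eta$, that $T_0(\unit)(a)\,\wh{T_0(\unit)'}(\eta) = 0$ and $|T_0(\unit)(a)| + |\wh{T_0(\unit)'}(\eta)| = 1$. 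Because $T_0(\unit)(a)$ does not depend on $\eta$, either $T_0(\unit)(a) = 0$ (and $|\wh{T_0(\unit)'}(\eta)| = 1$ for all $\eta$), or $T_0(\unit)(a) \neq 0$ and $\wh{T_0(\unit)'}(\eta) = 0$ for all $\eta \in \sa$.

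In the second, \emph{good} case I would finish exactly as in Lemma~\ref{lem4.6}: since $\sa$ is a boundary for $\A$, the identity $\wh{T_0(\unit)'} \equiv 0$ on $\sa$ forces $\wh{T_0(\unit)'} \equiv 0$ on $\M$, hence $T_0(\unit)' \equiv 0$ and $T_0(\unit)$ is constant on the connected set $\Di$. Writing $c_0 = T_0(\unit)(a)$ we get $|c_0| = 1$ and $\wh{T_0(\unit)} = c_0$ on $\Db$, and the displayed identity collapses to $[\alo(a,\eta,w)]^{\ez(\eta)} = c_0$, i.e.\ $\alo(a,\eta,w) = [c_0]^{\ez(\eta)} = \alo(a,\eta,1)$, independent of $w$. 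That is precisely the assertion.

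The hard part will be ruling out the first case $T_0(\unit)(a) = 0$; here the proof of Lemma~\ref{lem4.6} breaks down, because with $z = a$ fixed we only learn that $T_0(\unit)$ vanishes at the single point $a$, not on a whole boundary, so analyticity no longer forces $T_0(\unit) \equiv 0$. To get a contradiction I would exploit surjectivity of $T_0$. Assuming $T_0(\unit)(a) = 0$, write $B(\eta) = \wh{T_0(\unit)'}(\eta)$, so $|B(\eta)| = 1$ and $\alo(a,\eta,w) = [B(\eta)w]^{\ez(\eta)}$. Choose $h \in \Ao$ with $T_0(h) = \unit$; then $T_0(h)(a) = 1$ and $\wh{T_0(h)'} \equiv 0$. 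Substituting $f = h$ into \eqref{lem4.9.0} and simplifying the first summand (using $[[B(\eta)w]^{\ez(\eta)}]^{\ez(\eta)} = B(\eta)w$) gives $1 = B(\eta)[h(a)]^{\ez(\eta)}w + (\text{second summand})$, where the second summand has $w$--independent modulus $|\wh{h'}(\psio(\eta))|$. Comparing moduli in $w$ forces $B(\eta)[h(a)]^{\ez(\eta)} = 0$, whence $h(a) = 0$; the identity then reads $\alo(a,\eta,w)\,\wh{h'}(\psio(\eta))\,\varphio(\eta,w) = 1$, which pins down $\varphio(\eta,w) = w^{-\ez(\eta)}\varphio(\eta,1)$ up to a $w$--independent unimodular factor.

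Feeding this form of $\varphio$ back into \eqref{lem4.9.0} for an arbitrary $f$, the power $w^{\ez(\eta)}$ coming from $\alo$ and the power $w^{-\ez(\eta)}$ coming from $\varphio$ cancel, so the second summand becomes independent of $w$; matching the constant--in--$w$ parts then gives $T_0(f)(a) = [\,[B(\eta)]^{\ez(\eta)}\varphio(\eta,1)\wh{f'}(\psio(\eta))\,]^{\ez(\eta)\eo(\eta)}$, and taking moduli, $|T_0(f)(a)| = |\wh{f'}(\psio(\eta))|$ for every $\eta \in \sa$. Since $\psio$ maps $\sa$ onto $\sa$ (Lemma~\ref{lem3.5}), this says $|\wh{f'}|$ is constant on $\sa$ for every $f \in \Ao$, which is absurd: for instance $f$ with $f' = 2\unit - \id$ has $|\wh{f'}(\eta)| = |2 - \wh{\id}(\eta)|$ non--constant on $\sa$. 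This contradiction eliminates the case $T_0(\unit)(a) = 0$, leaving only the good case and completing the proof. The one delicate point I would double--check is that all the $[\,\cdot\,]^{s}$--manipulations respect the multiplicativity $[\la\mu]^{s} = [\la]^{s}[\mu]^{s}$ and the involution $[[\la]^{s}]^{s} = \la$, which is where the sign bookkeeping of $\ez$ and $\eo$ could go wrong.
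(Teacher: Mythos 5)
Your proof is correct, and its skeleton coincides with the paper's up to the crucial fork: both substitute $f = \unit$ into \eqref{lem4.9.0} to get \eqref{lem4.9.1}, both invoke Proposition~\ref{prop4.2} to obtain the dichotomy, and your treatment of the good case $T_0(\unit)(a) \neq 0$ (boundary property of $\sa$, analyticity, connectedness of $\Di$, then reading off $\alo(a,\eta,w) = [c_0]^{\ez(\eta)}$) is exactly the paper's. You also correctly identified where the proof of Lemma~\ref{lem4.6} breaks down for $\TT = \set{a}$. Where you genuinely diverge is in eliminating the degenerate case $T_0(\unit)(a) = 0$. The paper additionally substitutes $f = \id - a\unit$, plugs both resulting identities back into \eqref{lem4.9.0}, and matches the $w$-independent parts to get $T_0(f)(a) = T_0(\id - a\unit)(a)\,[\wh{f'}(\psio(\eta))]^{\ez(\eta)\eo(\eta)}$ for every $f$ and $\eta$; taking $f = \id^2$ and using surjectivity of $\psio$ to hit fibers of $\wh{\id}$ over $\pm 1$ forces $T_0(\id - a\unit)(a) = 0$, hence $T_0(f)(a) = 0$ for all $f$, contradicting surjectivity of $T_0$. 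You instead spend surjectivity at the outset on a preimage $h$ of $\unit$, deduce $h(a) = 0$ and the explicit twist $\varphio(\eta,w) = w^{-\ez(\eta)}\varphio(\eta,1)$, and then for arbitrary $f$ the cancellation of $w^{\ez(\eta)}$ against $w^{-\ez(\eta)}$ makes the derivative summand $w$-free, whence $|T_0(f)(a)| = |\wh{f'}(\psio(\eta))|$ for all $\eta$ --- contradicted by a single function such as $f' = 2\unit - \id$. Both routes consume the same background inputs: surjectivity of $T_0$, surjectivity of $\psio$ from Lemma~\ref{lem3.5}, and the non-constancy of $\wh{\id}$ on $\sa$ (the paper needs points of $\sa \cap \wh{\id}^{-1}(\pm 1)$; you need $|2-\wh{\id}|$ non-constant on $\sa$; both reduce to $\wh{\id}(\sa) = \T$, which holds because $\sa$ must norm the functions $z \mapsto \{(\ov{\xi}z+1)/2\}^m$). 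The paper's version is marginally leaner, never needing the $w$-dependence of $\varphio$; yours is more structural, exhibiting exactly what absurdity the degenerate case would entail (constancy of $|\wh{f'}|$ on the Shilov boundary for every $f \in \Ao$). Your $[\,\cdot\,]^{s}$ bookkeeping checks out at the two delicate spots: $[\alo(a,\eta,w)]^{\ez(\eta)} = B(\eta)w$ via the involution, and $\alo(a,\eta,w)\varphio(\eta,w) = [B(\eta)]^{\ez(\eta)}\varphio(\eta,1)$ via $[w]^{s} = w^{s}$ for $w \in \T$; the affine-in-$w$ coefficient matching you use at the end is the same ``liberty of the choice of $w$'' device the paper employs.
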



\begin{proof}
Let $g = \id - a \unit$, and then $g'=\unit$.
If we apply $f = \unit, g \in \Ao$ to \eqref{lem4.9.0}, then
\begin{align}
T_0(\unit)(a) + \wh{T_0(\unit)'}(\eta)w
&= 
[\alo(a, \eta, w)]^{\ez(\eta)},
\label{lem4.9.1} \\
T_0(g)(a) + \wh{T_0(g)'}(\eta)w
&=
[\alo(a, \eta, w)\varphio(\eta, w)]^{\ez(\eta)\eo(\eta)}
\label{lem4.9.2}
\end{align}
for all $(\eta,w) \in \sa \times \T$.
We shall prove that $T_0(\unit)(a) \neq 0$. To this end, we assume that
$T_0(\unit)(a) = 0$,
and then $\wh{T_0(\unit)'}(\eta)w = [\alo(a, \eta, w)]^{\ez(\eta)}$ for all 
$(\eta,w) \in \sa \times \T$.
Substituting this equality and \eqref{lem4.9.2} into \eqref{lem4.9.0} to obtain
\[
T_0(f)(a) + \wh{T_0(f)'}(\eta)w
= \wh{T_0(\unit)'}(\eta)w [f(a)]^{\ez(\eta)} + \{ T_0(g)(a) + \wh{T_0(g)'}(\eta)w \} 
[\wh{f'}(\psio(\eta))]^{\ez(\eta)\eo(\eta)}
\]
for all $f \in \Ao$ and $(\eta,w) \in \sa \times \T$.
Since $w\in\T$ is arbitrary, we get
\begin{equation}\label{lem4.9.3}
T_0(f)(a) = T_0(g)(a) [\wh{f'}(\psio(\eta))]^{\ez(\eta)\eo(\eta)}
\end{equation}
for all $f \in \Ao$ and $\eta \in \sa$.
Taking $f = \id^2 \in \Ao$ in \eqref{lem4.9.3}, we have
\begin{equation}\label{lem4.9.4}
T_0(\id^2)(a) = T_0(g)(a) [2\,\wh{\id}(\psio(\eta))]^{\ez(\eta)\eo(\eta)}
\end{equation}
for all $\eta \in \sa$.
Let $j\in\set{\pm 1}$.
Since $\sa\cap\M_j\neq\emptyset$, we can choose
$\eta_j\in\sa\cap\M_j$. Then $\wh{\id}(\eta_j)=j$.
By Lemma~\ref{lem3.9} (see, also Lemma~\ref{lem4.2}),
$\psio \colon \sa \to \sa$ is surjective, and thus,
there exists $\zeta_j\in\sa$ such that $\psio(\zeta_j)=\eta_j$.
We thus obtain $\wh{\id}(\psio(\zeta_j))=j$.
Substituting $\eta=\zeta_j$ into \eqref{lem4.9.4} to get
$T_0(\id^2)(a)=2jT_0(g)(a)$, which yields $T_0(g)(a) = 0$.
We obtain $T_0(f)(a) = 0$ for all $f \in \Ao$ by \eqref{lem4.9.3}.
This is impossible since $T_0$ is surjective. Consequently, $T_0(\unit)(a) \neq 0$,
as is claimed.

We see that $\wh{T_0(\unit)'}(\eta) = 0$ for all $\eta \in \sa$
by equality \eqref{lem4.9.1} with Proposition~\ref{prop4.4}.
Since $\sa$ is a boundary for $\A$, we have $\wh{T_0(\unit)'} = 0$ on $\M$.
Then $T_0(\unit)$ is constant on $\Di$, say $c_0 \in \C$.
Since $\Di$ is dense in $\M$, we obtain $\wh{T_0(\unit)} = c_0$ on $\M$.
In particular, $\wh{T_0(\unit)}=c_0$ on $\Db$.
Substituting $\wh{T_0(\unit)'}(\eta) = 0$ into \eqref{lem4.9.1} to get
$c_0 = T_0(\unit)(a) = [\alo(a, \eta, w)]^{\ez(\eta)}$ for all $(\eta, w) \in \sa \times \T$,
and thus $c_0 \in \T$.
Hence $\alo(a,\eta,w) = [c_0]^{\ez(\eta)} = \alo(a,\eta,1)$.
\end{proof}

By Lemma~\ref{lem4.9},
$\alo(a,\eta,w) = [c_0]^{\ez(\eta)}$ for all $(\eta,w) \in \sa \times \T$.
Equality \eqref{lem4.9.0} is reduced to
\begin{equation}\label{lem4.10.0}
T_0(f)(a) + \wh{T_0(f)'}(\eta)w
= c_0[f(a)]^{\ez(\eta)}
+ [c_0]^{\eo(\eta)} [\wh{f'}(\psio(\eta))\varphio(\eta, w)]^{\ez(\eta)\eo(\eta)}
\end{equation}
for every $f \in \Ao$ and $(\eta, w) \in \sa \times \T$.

In the next lemma, we determine $\varphio(\eta,w)$ as a function of variable 
$w\in\T$.
Then we derive the form of $\wh{T_0(f)'}(\eta)$ from \eqref{lem4.10.0}.


\begin{lem}\label{lem4.10}
\begin{enumerate}
\item
$\wh{T_0(\id_a)'}(\eta) = [c_0]^{\eo(\eta)} [\varphio(\eta, 1)]^{\ez(\eta)\eo(\eta)}$
for all $\eta \in \sa$, where $\id_a=\id-a\unit$.

\item
$\varphio(\eta, w) = \varphio(\eta, 1)w^{\ez(\eta)\eo(\eta)}$
for all $(\eta, w) \in \sa \times \T$.
\end{enumerate}
\end{lem}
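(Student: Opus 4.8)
The plan is to feed the single test function $f = \id_a = \id - a\unit$ into \eqref{lem4.10.0}. For this $f$ one has $\id_a(a)=0$ and $\id_a' = \unit$, so $\wh{\id_a'}\equiv 1$ on $\sa$; the first summand on the right of \eqref{lem4.10.0} vanishes and the identity collapses to
$$
T_0(\id_a)(a) + \wh{T_0(\id_a)'}(\eta)\,w = [c_0]^{\eo(\eta)}[\varphio(\eta,w)]^{\ez(\eta)\eo(\eta)}
$$
for all $(\eta,w)\in\sa\times\T$. Because $c_0,\varphio(\eta,w)\in\T$, the right-hand side is unimodular for every $w$, so Proposition~\ref{prop4.2} applies and gives $T_0(\id_a)(a)\,\wh{T_0(\id_a)'}(\eta)=0$ together with $|T_0(\id_a)(a)|+|\wh{T_0(\id_a)'}(\eta)|=1$.

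The heart of the argument, and the step I expect to be the main obstacle, is to show that in fact $T_0(\id_a)(a)=0$, so that $\wh{T_0(\id_a)'}(\eta)$ is unimodular rather than zero. Since $T_0(\id_a)(a)$ does not depend on $\eta$, Proposition~\ref{prop4.2} forces a clean dichotomy: either $T_0(\id_a)(a)=0$ (and $\wh{T_0(\id_a)'}(\eta)$ has modulus $1$ for every $\eta$), or $|T_0(\id_a)(a)|=1$ and $\wh{T_0(\id_a)'}\equiv 0$ on $\sa$. I would exclude the latter alternative by the surjectivity trick already used in Lemma~\ref{lem4.7}. If $\wh{T_0(\id_a)'}(\eta)=0$ for all $\eta$, then the displayed identity forces $[\varphio(\eta,w)]^{\ez(\eta)\eo(\eta)}$ to be independent of $w$. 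Choosing $h = T_0^{-1}(\id_a)$ (legitimate by surjectivity of $T_0$), one has $T_0(h)(a)=0$ and $\wh{T_0(h)'}(\eta)=\wh{\id_a'}(\eta)=1$; substituting $h$ into \eqref{lem4.10.0} makes the entire right-hand side independent of $w$, while the left-hand side equals $w$, a contradiction. Hence $T_0(\id_a)(a)=0$.

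Once $T_0(\id_a)(a)=0$ is in hand, part (1) is immediate: the displayed identity reads $\wh{T_0(\id_a)'}(\eta)\,w=[c_0]^{\eo(\eta)}[\varphio(\eta,w)]^{\ez(\eta)\eo(\eta)}$, and setting $w=1$ gives the asserted formula. For part (2) I would divide the general identity by its value at $w=1$ (both sides being unimodular, hence nonzero), obtaining $[\varphio(\eta,w)]^{\ez(\eta)\eo(\eta)}=w\,[\varphio(\eta,1)]^{\ez(\eta)\eo(\eta)}$; applying the operation $[\,\cdot\,]^{\ez(\eta)\eo(\eta)}$ to both sides and using $\varphio(\eta,\cdot),w\in\T$ together with $[[\la]^{s}]^{s}=\la$ then yields $\varphio(\eta,w)=\varphio(\eta,1)\,w^{\ez(\eta)\eo(\eta)}$. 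This last computation is routine and exactly parallels the end of Lemma~\ref{lem4.8}, so no new difficulty arises there.
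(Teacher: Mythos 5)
Your proof is correct and follows essentially the same route as the paper: substitute $f=\id_a$ into \eqref{lem4.10.0}, rule out $T_0(\id_a)(a)\neq 0$ via Proposition~\ref{prop4.2} together with the surjectivity trick (your explicit witness $h=T_0^{-1}(\id_a)$ is exactly the paper's $g$ with $T_0(g)(a)=0$ and $\wh{T_0(g)'}(\eta)=1$), then set $w=1$ for part (1) and divide by the $w=1$ identity and apply $[\,\cdot\,]^{\ez(\eta)\eo(\eta)}$ for part (2). No gaps.
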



\begin{proof}
Let $\eta_0 \in \sa$.
Taking $f = \id _a$ in \eqref{lem4.10.0}, we obtain
\begin{equation}\label{lem4.10.1}
T_0(\id_a)(a) + \wh{T_0(\id_a)'}(\eta_0)w = 
[c_0]^{\eo(\eta_0)} [\varphio(\eta_0, w)]^{\ez(\eta_0)\eo(\eta_0)}
\end{equation}
for every $w \in \T$. 
Then $|T_0(\id_a)(a) + \wh{T_0(\id_a)'}(\eta_0)w| = 1$ for all $w \in \T$.
We shall prove that $T_0(\id_a)(a) = 0$. Suppose, on the contrary,
that $T_0(\id_a)(a) \neq 0$.
Proposition~\ref{prop4.4} shows that $\wh{T_0(\id_a)'}(\eta_0) = 0$.
Then, from \eqref{lem4.10.1} we have
$T_0(\id_a)(a) = [c_0]^{\eo(\eta_0)} [\varphio(\eta_0, w)]^{\ez(\eta_0)\eo(\eta_0)}$
for all $w \in \T$.
Since $T_0$ is surjective, there exists $g \in \Ao$ such that $T_0(g)(a) = 0$
and $\wh{T_0(g)'}(\eta_0) = 1$.
Applying these equalities and 
$T_0(\id_a)(a) = [c_0]^{\eo(\eta_0)}[\varphio(\eta_0,w)]^{\ez(\eta_0)\eo(\eta_0)}$
to \eqref{lem4.10.0}, we get
\[
w 
=T_0(g)(a) + \wh{T_0(g)'}(\eta_0)w
=c_0[g(a)]^{\ez(\eta_0)} + T_0(\id_a)(a)\, [\wh{g'}(\psio(\eta_0))]^{\ez(\eta_0)\eo(\eta_0)}
\]
for all $w \in \T$. 
This is impossible since the rightmost hand side of the above equalities is independent
of $w \in \T$. Consequently, $T_0(\id_a)(a) = 0$.
Because $\eta_0\in\sa$ is arbitrarily chosen, it follows from \eqref{lem4.10.1} that
$\wh{T_0(\id_a)'}(\eta)w = [c_0]^{\eo(\eta)} [\varphio(\eta, w)]^{\ez(\eta)\eo(\eta)}$
for all $(\eta, w) \in \sa \times \T$.
Taking $w = 1$ in the last equality, we get
$\wh{T_0(\id_a)'}(\eta) =  [c_0]^{\eo(\eta)} 
[\varphio(\eta, 1)]^{\ez(\eta)\eo(\eta)}$ for all $\eta \in \sa$.

Note that $\wh{T_0(\id_a)'}(\eta)\neq 0$ for $\eta \in \T$. We thus obtain
$$
w = \fr{\wh{T_0(\id_a)'}(\eta)w}{\wh{T_0(\id_a)'}(\eta)}
= \fr{[\varphio(\eta, w)]^{\ez(\eta)\eo(\eta)}}{[\varphio(\eta, 1)]^{\ez(\eta)\eo(\eta)}},
$$
and therefore, $\varphio(\eta, w) = w^{\ez(\eta)\eo(\eta)}\varphio(\eta, 1)$
for all $(\eta, w) \in \sa \times \T$.
\end{proof}

We are in a position to prove Theorem~\ref{thm2}.
We need to investigate $\psio$ to characterize the isometry $T_0$ on $\Ao$
for $\Ks=\set{a}\times\sa\times\T$.
The difficulty is to extend the map $\psio\colon\sa\to\sa$
to a homeomorphism defined on $\M$.
The main idea of its proof is to induce a surjective real linear isometry
on the uniform algebra $\A$.
We then induce a real algebra isometric isomorphism on $\A$.
It is known that such an isometric isomorphism is represented as
a combination of a composition operator induced by a homeomorphism on $\M$
and the complex conjugate.
Then we prove that $\psio$ is extended to $\M$.


\begin{proof}[\bf\textit{Proof of Theorem~\ref{thm2}}]
For simplicity of notation, we write $\varphio(\eta, 1) = \varphio(\eta)$.
By Lemma~\ref{lem4.10}, equality \eqref{lem4.10.0} is reduced to
$$
T_0(f)(a) + \wh{T_0(f)'}(\eta)w
= c_0[f(a)]^{\ez(\eta)} + w [c_0]^{\eo(\eta)} 
[\varphio(\eta)\wh{f'}(\psio(\eta))]^{\ez(\eta)\eo(\eta)}
$$
for all $f \in \Ao$ and $(\eta,w) \in \sa \times \T$.
Since $w\in\T$ is arbitrary, we obtain
\begin{equation}\label{thm2.1}
\wh{T_0(f)'}(\eta) = [c_0]^{\eo(\eta)} [\varphio(\eta) \wh{f'}(\psio(\eta))]^{\ez(\eta)\eo(\eta)}
\end{equation}
and $T_0(f)(a) = c_0[f(a)]^{\ez(\eta)}$ for all $f \in \Ao$ and $\eta \in \sa$.
Substituting $f = i \unit \in \Ao$ into the last equality, we get
$T_0(i \unit)(a) = c_0[i]^{\ez(\eta)}=i\ez(\eta)c_0$ for all $\eta \in \sa$.
This shows that $s_0$ is a constant function;
we may and do write $\ez(\eta) = \ez$. Then we have
\begin{equation}\label{thm2.2}
T_0(f)(a) = c_0[f(a)]^{\ez}
\end{equation}
for all $f \in \Ao$. For each $v \in \H$, we define $I_a(v)$ by 
\[
I_a(v)(z) = \int_{[a,z]} v(\zeta)\,d\zeta
\qq (z \in \Di),
\]
where $[a,z]$ denotes the straight line interval from $a$ to $z$.
Then $I_a(v)$ belongs to $\H$ with $I_a(v)(a) = 0$ and
\begin{equation}\label{thm2.3}
I_a(v)' = v \qq\mbox{on}\ \Di.
\end{equation}
Hence $I_a(v) \in \Ao$ and $\wh{I_a(v)'} = \wh{v}$ on $\M$.
We may regard $I_a$ as a complex linear operator from $\H$ to $\Ao$.

Recall that $\A = \set{\wh{v} \in C(\M) : v \in \H}$. Define
$\W \colon \A \to \A$ by
\begin{equation}\label{thm2.4}
\W(\wh{v})(\eta) = \wh{\{T_0(I_a(v))\}'}(\eta)
\qq (v \in \H, \ \eta \in \M).
\end{equation}
Since the Gelfand transform from $\H$ onto $\A$ is an isometric isomorphism,
we see that the mapping $\W$ is well-defined.
$$ 
\begin{CD}
\A @>{\W}>> \A \\
@A{\wh{\cdot}}AA
@AA{\wh{\cdot}}A \\ 
\H @. \H \\
@V{I_a}VV 
@VV{I_a}V \\ 
\Ao @>>{T_0}> \Ao
\end{CD} 
$$
Because $T_0 \colon \Ao \to \Ao$ is real linear, so is $\W \colon \A \to \A$.
Recall that $\wh{I_a(v)'} = \wh{v}$ on $\M$ for $v \in \H$.
Substituting $f = I_a(v)$ into \eqref{thm2.1} to obtain
$\wh{\{T_0(I_a(v))\}'}(\eta) = [c_0]^{\eo(\eta)} [\varphio(\eta)\wh{v}(\psio(\eta))]^{\ez\eo(\eta)}$
for all $v \in \H$ and $\eta \in \sa$, where we have used that $\ez$ is a constant function.
The last equality, combined with \eqref{thm2.4}, shows that
$$
\W(\wh{v})(\eta) 
= [c_0]^{\eo(\eta)} [\varphio(\eta) \wh{v}(\psio(\eta))]^{\ez\eo(\eta)}
$$
for $\wh{v} \in \A$ and $\eta \in \sa$.
Note that $c_0, \varphio(\eta) \in \T$ by definition
(see Lemma~\ref{lem4.9} and Definition~\ref{defn2}),
and thus $|\W(\wh{v})(\eta)| = |\wh{v}(\psio(\eta))|$ for all $\wh{v} \in \A$ and $\eta \in \sa$.
Since $\psio \colon \sa \to \sa$ is surjective by Lemma~\ref{lem3.9} and \eqref{lem4.9.-1},
we see that
$\V{\W(\wh{v})}_\M = \V{\wh{v}}_\M$ for all $\wh{v} \in \A$.
Thus $\W$ is a real linear isometry on $(\A, \DV{\cdot}_\M)$.

We shall prove that
\begin{equation}\label{thm2.5}
\wh{\{T_0(I_a(f'))\}'} = \wh{T_0(f)'}
\end{equation}
on $\M$ for each $f\in\Ao$.
Let $f \in \Ao$, and then $\{I_a(f')\}' = f'$ on $\Di$ by \eqref{thm2.3}.
Thus $I_a(f') - f$ is constant on $\Di$. 
By the definition of $I_a$, we have $I_a(f')(a) = 0$, and hence
\begin{equation}\label{thm2.12}
I_a(f') = f - f(a)\unit
\qq\mbox{on $\Di$}.
\end{equation}
Because $T_0$ is real linear, we obtain
$T_0(I_a(f')) = T_0(f) - T_0(f(a)\unit)$ on $\Di$.
Therefore, $\wh{\{T_0(I_a(f'))\}'} = \wh{T_0(f)'} - \wh{T_0(f(a)\unit)'}$ on $\M$.
Equality \eqref{thm2.1} shows that $\wh{T_0(f(a)\unit)'} = 0$ on $\sa$, and thus
$\wh{T_0(f(a)\unit)'} = 0$ on $\M$, since $\sa$ is a boundary for $\A$.
We deduce that
$\wh{\{T_0(I_a(f'))\}'} = \wh{T_0(f)'}$ on $\M$.

We show that $\W$ is surjective.
Let $\wh{v_0}\in\A$. By the surjectivity of $T_0 \colon \Ao \to \Ao$,
there exists $g \in \Ao$ such that $T_0(g) = I_a(v_0)$ on $\Di$,
and hence $\wh{T_0(g)'}=\wh{I_a(v_0)'}=\wh{v_0}$ on $\M$ by \eqref{thm2.3}.
Equality \eqref{thm2.5} shows that $\wh{\{T_0(I_a(g'))\}'} = \wh{T_0(g)'}=\wh{v_0}$ on $\M$.
By \eqref{thm2.4}, $\W(\wh{g'}) = \wh{\{T_0(I_a(g'))\}'} = \wh{v_0}$ on $\M$,
which shows the surjectivity of $\W \colon \A \to \A$.
Hence $\W$ is a surjective real linear isometry on the uniform algebra $(\A, \DV{\cdot}_\M)$.
By \cite[Theorem~1.1]{miu1} (see also \cite[Theorem~3.3]{hat}),
there exist a continuous function
$u_1 \colon \sa \to \T$, a closed and open set $G_0 \subset \sa$ and a homeomorphism
$\varrho \colon \sa \to \sa$ such that
\begin{equation}\label{thm2.8}
\W(\wh{v})(\eta) =
\begin{cases}
u_1(\eta)\wh{v}(\varrho(\eta)) & \eta \in G_0 \\[2mm]
u_1(\eta)\ov{\wh{v}(\varrho(\eta))} & \eta \in \sa \setminus G_0
\end{cases}
\end{equation}
for all $\wh{v} \in \A$. Then $\W(\wh{\unit}) = u_1$ on $\sa$.

By the surjectivity of $\W$, there exists $v_1 \in \H$ such that $\W(\wh{v_1}) = \wh{\unit}$.
Then $\V{\wh{v_1}}_\M = \V{\W(\wh{v_1})}_\M = 1$, since $\W$ is a real linear isometry.
Equality \eqref{thm2.8} implies that 
$\W(\wh{\unit})\W(\wh{v_1}^2) = u_1 \W(\wh{v_1}^2) = \W(\wh{v_1})^2$
on $\sa$, and thus $\W(\wh{\unit})\W(\wh{v_1}^2) = \wh{\unit}$ on $\sa$.
Since $\sa$ is a boundary for $\A$, we have $\W(\wh{\unit})\W(\wh{v_1}^2) = \wh{\unit}$
on $\M$.
Note that $|\W(\wh{v_1}^2)(\eta)| \leq 1$ for all $\eta \in \M$,
since $\V{\W(\wh{v_1}^2)}_\M = \V{\wh{v_1}^2}_\M = 1$.
Since $\V{\W(\wh{\unit})}_\M=\V{\wh{\unit}}_\M=1$, we obatin
$|\W(\wh{\unit})(\eta)| \leq 1$ for $\eta \in \M$.
Combining these two inequalities with the equality
$\W(\wh{\unit})(\eta)\W(\wh{v_1}^2)(\eta) = 1$ for $\eta \in \M$,
we deduce $|\W(\wh{\unit})| = 1$ on $\M$.
Since $\W(\wh{\unit})$ is the Gelfand transform of some analytic function on $\Di\subset\M$,
the maximum modulus principle shows that $\W(\wh{\unit})$ is a constant function
of modulus one. Hence, $\W(\wh{\unit}) = c_1$ on $\M$ for some $c_1 \in \T$.

Since $\W$ is bijective and real linear, so is the mapping $\ov{c_1}\W \colon \A \to \A$.
By \eqref{thm2.8}, we see that
$\ov{c_1}\W(\wh{v_1} \wh{v_2}) = \ov{c_1}\W(\wh{v_1})\ov{c_1}\W(\wh{v_2})$ on $\sa$
for all $\wh{v_1}, \wh{v_2} \in \A$.
Because $\sa$ is a boundary for $\A$, we see that $\ov{c_1}\W$ is multiplicative, and
hence it is a bijective real-algebra automorphism on $\A$.
By \cite[Themrem~2.1]{hat}, there exist a homeomorphism $\rho_1 \colon \M \to \M$
and a closed and open set $G_1$ of $\M$ such that 
$$
\ov{c_1}\W(\wh{v})(\eta) =
\begin{cases}
\wh{v}(\rho_1(\eta)) & \eta \in G_1 \\[2mm]
\ov{\wh{v}(\rho_1(\eta))} & \eta \in \M \setminus G_1
\end{cases}
$$
for all $\wh{v} \in \A$. By the connectedness of $\M$, we deduce that $G_1 = \M$
or $G_1 = \emptyset$, and hence there exists $\et \in \set{\pm 1}$ such that
\begin{equation}\label{thm2.9}
\W(\wh{v})(\eta) = c_1[\wh{v}(\rho_1(\eta))]^{\et}
\end{equation}
for all $\wh{v} \in \A$ and $\eta \in \M$.

Since $\W(\wh{\id}) \in \A$, there exists $h_1 \in \H$ such that $\W(\wh{\id}) = \wh{h_1}$.
Then $|\wh{h_1}|=|\wh{\id}\circ\rho_1|$ on $\M$ by \eqref{thm2.9}.
Thus $\V{\wh{h_1}}_\M = \V{\wh{\id} \circ \rho_1}_\M = 1$,
which implies $h_1(\Di) \subset \Db$.
Because $\rho_1$ is a homeomorphism on $\M$, we observe that $h_1$ is a
non-constant analytic function on $\Di$.
The open mapping theorem yields that
$h_1(\Di)$ is an open set, and hence $h_1(\Di) \subset \Di$.
This implies that $|\wh{h_1}|<1$ on $\Di$.
Since $|\wh{\id}| = 1$ on $\M \setminus \Di$ (see \cite[p. 161]{hof}),
the equality $|\wh{\id} \circ \rho_1| = |\wh{h_1}|$ shows that
$\rho_1(\Di) \subset \Di$.

The above arguments can be applied to the surjective real linear isometry $\W^{-1}$.
Then there exist a homeomorphism $\rho_{-1} \colon \M \to \M$ with $\rho_{-1}(\Di)\subset\Di$
and a constant
$s_3 \in \set{\pm 1}$
such that
\begin{equation}\label{thm2.10}
\W^{-1}(\wh{v})(\eta) = c_{-1}[\wh{v}(\rho_{-1}(\eta))]^{s_3}
\end{equation}
for all $\wh{v} \in \A$ and $\eta \in \M$. 
By \eqref{thm2.9} and \eqref{thm2.10}, 
the equality $\wh{v} = \W^{-1}(\W(\wh{v}))$ shows that
$$
\wh{v}(\eta)
= \W^{-1}(c_1 [\wh{v} \circ \rho_1]^{\et})(\eta)
= c_{-1} \Bigl[ c_1 [\wh{v} \circ \rho_1]^{\et} (\rho_{-1}(\eta)) \Bigr]^{s_3}
$$
for all $\wh{v} \in \A$ and $\eta \in \M$, where
$[\wh{v} \circ \rho_1]^{s_2}(\eta) = [\wh{v}(\rho_1(\eta))]^{s_2}$.
For $\wh{v}=\wh{\unit}$, the above equalities show that $1=c_{-1}[c_1]^{s_3}$,
and hence $\wh{v} = [\wh{v} \circ \rho_1 \circ \rho_{-1}]^{\et s_3}$ for all $\wh{v}\in\A$.
Substituting $\wh{v} = \wh{\id}$ into this equality to obtain
$\wh{\id} = [\wh{\id} \circ \rho_1 \circ \rho_{-1}]^{\et s_3}$ on $\M$.
Since $\rho_j(\Di) \subset \Di$ for $j=\pm 1$, we obtain
$z = [\rho_1(\rho_{-1}(z))]^{\et s_3}$ for all $z \in \Di$.
This shows that $\Di \subset \rho_1(\Di)$, and therefore $\rho_1(\Di)=\Di$.
Hence $\rho_1|_\Di \colon \Di \to \Di$ is a homeomorphism.
For each $z \in \Di$,
$c_1[\rho_1(z)]^{\et} = c_1[\wh{\id}(\rho_1(z))]^{\et} = \wh{h_1}(z) = h_1(z)$
by \eqref{thm2.9} with $\W(\wh{\id}) = \wh{h_1}$, and thus 
$[\rho_1|_\Di]^{\et} = \ov{c_1}h_1$ is analytic on $\Di$.
Since $\rho_1|_\Di \colon \Di \to \Di$ is a homeomorphism,
there exist $\la \in \T$ and $b \in \Di$ such that
\begin{equation}\label{thm2.11}
[\rho_1(z)]^{\et} = \la\, \fr{z-b}{1-\bar{b}z}
\qq (z \in \Di)
\end{equation}
(see, for example, \cite[Theorem~12.6]{rud}).

Let $f \in \Ao$. Equality \eqref{thm2.5}, combined with \eqref{thm2.4} and \eqref{thm2.9}, shows that
$$
\wh{T_0(f)'}(\eta)
= \wh{\{T_0(I_a(f'))\}'}(\eta) = \W(\wh{f'})(\eta)
= c_1 [\wh{f'}(\rho_1(\eta))]^{\et}
$$
for every $\eta \in \M$. In particular, since $\rho_1(\Di) = \Di$,
$$
T_0(f)'(z) 
= c_1 [f'(\rho_1(z))]^{\et}
\qq (z \in \Di).
$$
Equality \eqref{thm2.12}, applied to $T_0(f)$ instead of $f$, shows that
$I_a(T_0(f)') = T_0(f) - T_0(f)(a)\unit$ on $\Di$, that is,
$$
T_0(f)(z) = T_0(f)(a) + I_a(T_0(f)')(z)
$$
for all $z \in \Di$.
Recall that $T_0(f)(a) = c_0[f(a)]^{\ez}$ by \eqref{thm2.2}, and that
$[\rho_1(z)]^{\et} = \la(z-b)/(1-\bar{b}z)$ for $z \in \Di$ by \eqref{thm2.11}.
By the definition of $I_a$ with $T_0(f)' = c_1[f' \circ \rho_1]^{s_2}$, we get
\begin{align*}
T_0(f)(z)
&=
T_0(f)(a) + I_a(T_0(f)')(z)
=c_0[f(a)]^{\ez} + \int_{[a,z]} c_1 [f'(\rho_1(\zeta))]^{\et}\, d\zeta \\
&=
c_0[f(a)]^{\ez} + \int_{[a,z]} c_1 
\left[ f' \left( \left[ \la\, \fr{\zeta-b}{1-\bar{b}\zeta} \right]^{\et} \right) \right]^{\et}\, d\zeta
\end{align*}
for all $f \in \Ao$ and $z \in \Di$.
Since $T_0 = T-T(0)$ by \eqref{T_0}, we obtain the forms displayed in Theorem~\ref{thm2}.

Conversely, if $T$ is one of the above forms, then we can check that it is a surjective
isometry on $\Ao$. This completes the proof.
\end{proof}

On behalf of all authors, the corresponding author states that there is no conflict of interest.

\begin{ack}
The authors of the paper are deeply grateful to the referees for their valuable comments
and suggestions that significantly improve the former manuscript.
\end{ack}




\end{document}